\documentclass[12pt,psamsfonts]{amsart}

\usepackage{amssymb,amsfonts,amsmath}
\usepackage{enumerate}
\usepackage{mathrsfs}
\usepackage{fullpage}
\usepackage{xspace}
\usepackage[margin=1.0in]{geometry}
\usepackage{tcolorbox}
\usepackage{tikz-cd}
\usepackage{color}
\usepackage{aliascnt}
\usepackage[foot]{amsaddr}
\usepackage{hyperref}
\usepackage{graphicx}
\usepackage[algoruled,linesnumbered]{algorithm2e}

\usepackage{enumitem}
\setlist[enumerate]{label=$(\mathrm{\arabic*})$, leftmargin=*}
\setlist[itemize]{leftmargin=*}

\newtheorem{thm}{Theorem}[section]

\newaliascnt{theo}{thm}
\newtheorem{theo}[theo]{Theorem}
\aliascntresetthe{theo}

\newaliascnt{cor}{thm}
\newtheorem{cor}[cor]{Corollary}
\aliascntresetthe{cor}

\newaliascnt{prop}{thm}
\newtheorem{prop}[prop]{Proposition}
\aliascntresetthe{prop}

\newaliascnt{lem}{thm}
\newtheorem{lem}[lem]{Lemma}
\aliascntresetthe{lem}

\newaliascnt{conj}{thm}
\newtheorem{conj}[conj]{Conjecture}
\aliascntresetthe{conj}

\newaliascnt{que}{thm}

\aliascntresetthe{que}

\newaliascnt{ass}{thm}

\aliascntresetthe{ass}

\newaliascnt{defnot}{thm}

\aliascntresetthe{defnot}

\newaliascnt{princ}{thm}

\aliascntresetthe{princ}

\theoremstyle{remark}
\newaliascnt{rem}{thm}
\newtheorem{rem}[rem]{Remark}
\aliascntresetthe{rem}


\theoremstyle{definition}

\newaliascnt{defn}{thm}
\newtheorem{defn}[defn]{Definition}
\aliascntresetthe{defn}

\newaliascnt{exmp}{thm}

\aliascntresetthe{exmp}

\newaliascnt{notn}{thm}
\newtheorem{notn}[notn]{Notation}
\aliascntresetthe{notn}

\newtheorem{rems}[thm]{Remarks}

\newcommand{\Z}{\mathbb{Z}\xspace}
\newcommand{\QQ}{\overline{\mathbb{Q}}\xspace}
\newcommand{\C}{\mathbb{C}\xspace}
\newcommand{\F}{\mathbb{F}\xspace}
\newcommand{\Q}{\mathbb{Q}\xspace}

\newcommand{\kk}{\overline{k}\xspace}

\DeclareMathOperator{\ord}{ord}
\DeclareMathOperator{\Sym}{Sym}

\DeclareMathOperator{\alb}{alb}

\DeclareMathOperator{\Pic}{Pic}

\DeclareMathOperator{\Jac}{Jac}

\DeclareMathOperator{\Alb}{Alb}

\DeclareMathOperator{\CH}{CH}
\DeclareMathOperator{\rk}{rank}

\DeclareMathOperator{\im}{im}

\DeclareMathOperator{\Kum}{Kum}

\makeatletter
\let\c@equation\c@thm
\makeatother
\numberwithin{equation}{section}

\newcommand{\red}{\color{red}}

\newcommand{\blue}{\color{blue}}
\newcommand{\black}{\color{black}}

\newcommand{\new}{_{\mathrm{comp}}}

\title{Hyperelliptic curves mapping to abelian varieties and applications to Beilinson's conjecture for zero-cycles}

\author[*]{Evangelia Gazaki*} \address[*]{\normalfont Department of Mathematics, University of Virginia, 221 Kerchof Hall, 141 Cabell Dr., Charlottesville, VA, 22904, USA. Email: \texttt{eg4va@virginia.edu}}
\author[**]{Jonathan Love**} \address[**]{\normalfont Mathematical Institute, Leiden University, Einsteinweg 55, 2333 CC Leiden, the Netherlands. Email: \texttt{j.r.love@math.leidenuniv.nl}}
\begin{document}

	\begin{abstract} 
		Let $A$ be an abelian surface over an algebraically closed field $\kk$ with an embedding $\kk\hookrightarrow\C$. When $A$ is isogenous to a product of elliptic curves, we describe a large collection of pairwise non-isomorphic hyperelliptic curves mapping birationally into $A$. For infinitely many integers $g\geq 2$, this collection has infinitely many curves of genus $g$, and no two curves in the collection have the same image under any isogeny from $A$.  Using these hyperelliptic curves, we find many rational equivalences in the Chow group of zero-cycles $\CH_0(A)$.  We 
		use these results to give some progress towards 
		Beilinson's conjecture for zero-cycles, which predicts that for a smooth projective variety $X$ over $\QQ$ the kernel of the Albanese map of $X$ is zero.

	\end{abstract}

	\maketitle
	
	\bigskip

\section{Introduction}

 In this article,  unless otherwise specified, we will be working over an algebraically closed field $\kk$. 
For a smooth projective variety $X$ over $\kk$ we consider the Chow group $\CH_0(X)$ of zero-cycles modulo rational equivalence on $X$. This group has a filtration 
\[\CH_0(X)\supset F^1(X)\supset F^2(X)\supset 0,\] where $F^1(X)=\ker\left(\CH_0(X)\xrightarrow{\deg}\Z\right)$ is the kernel of the degree map, sending the class $[x]$ of a closed point $x\in X$ to $1$, and $F^2(X)=\ker\left(F^1(X)\xrightarrow{\alb_X}\Alb_X(\kk)\right)$ is the kernel of the Albanese map of $X$.  
When $\kk=\C$ and the variety $X$ has positive geometric genus $p_g(X)$, the Albanese kernel $F^2(X)$ is known to be enormous (see \cite{Mumford1968, Bloch1975}) and it cannot be parametrized by  an algebraic variety. More generally, if $X$ is defined over a large transcendental field $\kk$ and $p_g(X)>0$, then $F^2(X)\neq 0$. 
\black 
  In contrast, the situation is conjectured to be vastly different over $\QQ$. A famous conjecture 
of Beilinson predicts the following. 
\begin{conj}\label{beilconj} (Beilinson \cite{Beilinson1984}) Let $X$ be a smooth projective variety over $\QQ$. Then the Albanese map is injective. That is, $F^2(X)=0$. 
\end{conj}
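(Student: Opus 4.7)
The plan is to break \autoref{beilconj} into successively simpler cases and to deploy the hyperelliptic-curve construction of this paper at the most delicate step. First I would reduce to surfaces: given a smooth projective $X$ of dimension $d \geq 3$, a Lefschetz-type argument (combining a Roitman-style moving lemma with the action of correspondences on Chow groups of zero-cycles) shows that for $S \subset X$ a smooth surface cut out by a sufficiently general complete intersection, the restriction $F^2(S) \twoheadrightarrow F^2(X)$ is surjective. So it suffices to prove $F^2(S) = 0$ for every smooth projective surface $S$ over $\QQ$.

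Next, I would stratify according to the geometric genus $p_g(S)$. When $p_g(S) = 0$, the claim is Bloch's conjecture; when the Albanese dimension of $S$ is at most one, classical arguments handle the case. The remaining case is a surface $S$ of maximal Albanese dimension with $p_g(S) > 0$. Here I would use the Albanese morphism $\alb_S \colon S \to \Alb(S)$ together with correspondences to show that $F^2(S)$ is controlled by $F^2(A)$ for some abelian surface $A$ appearing as a factor of, or as a smooth surface inside, $\Alb(S)$, reducing the conjecture to the case $X = A$ an abelian surface.

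For an abelian surface $A$ I would invoke the Beauville decomposition $\CH_0(A) \otimes \Q = \bigoplus_{s=0}^{2} \CH_0(A)_{(s)}$, in which $[n]^\ast$ acts as multiplication by $n^s$. Since $\CH_0(A)_{(2)} = F^2(A) \otimes \Q$, the task reduces to annihilating this piece. At this point the machinery of the present paper enters: the hyperelliptic curves $C \hookrightarrow A$ constructed here come equipped with a hyperelliptic involution $\iota$, and for every point $p \in C(\QQ)$ the divisor $p + \iota(p)$ moves in a $g^1_2$, hence pushes forward to a constant class in $\CH_0(A)$. I would aim to extend the catalogue of such curves beyond the isogenous-to-product case and to prove a density statement guaranteeing that the rational equivalences so obtained generate $F^2(A)$ as an abelian group.

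The hard part is that each of the three reductions terminates in an open problem of the first magnitude: Bloch's conjecture itself remains unknown for most surfaces of general type with $p_g = 0$; the reduction from surfaces of maximal Albanese dimension to abelian surfaces relies on strong forms of the conjectural Bloch--Beilinson filtration, which are not available unconditionally; and producing enough embedded curves in an arbitrary abelian surface to exhaust $F^2(A)$ over $\QQ$ appears, at present, out of reach, since the methods of this paper are limited to surfaces isogenous to a product of elliptic curves. The plan above should therefore be viewed as a template identifying the essential content of the conjecture rather than a route to an imminent proof, and a genuine attack will require new ingredients beyond the hyperelliptic construction, in particular a density mechanism showing that the rational equivalences coming from embedded curves exhaust all of $F^2(A)$, together with parallel advances on Bloch's conjecture for surfaces with $p_g = 0$.
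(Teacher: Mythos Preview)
The statement you are addressing is a \emph{conjecture}, not a theorem: the paper does not prove it, and in fact states explicitly that ``this conjecture is still very far out of reach'' and that ``there is not a single example of a smooth projective surface $X$ with $p_g(X)>0$ that is known to satisfy this.'' There is therefore no proof in the paper to compare your proposal against. What the paper actually does is give partial progress in the special case of abelian surfaces isogenous to a product of elliptic curves (\autoref{abel}, \autoref{infinite_H:intro}), together with a conditional reduction (\autoref{beilreduce1}) that depends on an unproven density hypothesis for hyperelliptic points.

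Your proposal is candid about its own limitations, but several of the reductions are more problematic than you indicate. The claim that for Albanese dimension at most one ``classical arguments handle the case'' is incorrect: a K3 surface has $\Alb_X=0$ (hence Albanese dimension $0$) and $p_g=1$, and Beilinson's conjecture is wide open there --- this is precisely the Kummer-surface case the paper discusses at length. Likewise, the reduction from a surface $S$ of maximal Albanese dimension to an abelian surface inside $\Alb(S)$ is not a known result; controlling $F^2(S)$ via $F^2$ of a subvariety of $\Alb(S)$ would require exactly the kind of functoriality of the conjectural Bloch--Beilinson filtration that you yourself flag as unavailable. Finally, even in the abelian-surface case, the paper's hyperelliptic construction is restricted to surfaces isogenous to a product of elliptic curves and makes no claim to exhaust $F^2(A)$; extending it to arbitrary abelian surfaces and proving a density statement is precisely the step the paper identifies as the remaining obstacle. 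In short, your template correctly locates the difficulties, but none of its steps is currently a theorem, and the paper does not pretend otherwise.
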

 To a large extent, this conjecture is still  very far out of reach. In fact, 
to the authors' knowledge there is not a single example of a smooth projective surface $X$ with $p_g(X)>0$ that is known to satisfy this. 
The purpose of this article is to give some hope towards \autoref{beilconj} for two special classes of surfaces with positive geometric genus, namely abelian surfaces  and their associated Kummer surfaces. 

Kummer surfaces form a special class of projective $K3$ surfaces. For $X$ a general $K3$ surface, we have $\Alb_X=0$. Thus, proving Beilinson's conjecture for $X/\QQ$ amounts to showing that every two $\QQ$-points $x,y$ are rationally equivalent. Note that this is very far from being true for two general $\C$-points. A celebrated result of Beauville and Voisin (\cite{Beauville/Voisin2004}) states that if $X/\kk$ is a $K3$ surface and $x,y\in X(\kk)$ are points that lie on some (possibly different) rational curve inside $X$, then $x,y$ are rationally equivalent. Thus, a wishful hope would be that $X(\QQ)$ can be covered by rational curves. In fact, this is predicted by a conjecture of Bogomolov from 1981 (see \cite[p.~2]{bogomolov_tschinkel} for a more recent reference). 

Next, consider an abelian surface $A$ over $\kk$ with zero element $0\in A(\kk)$. In this case $\Alb_A=A$ and the Albanese kernel $F^2(A)$ is generated by zero-cycles of the following form 
\[F^2(A)=\langle z_{a,b}:=[a+b]-[a]-[b]+[0], a,b\in A\rangle.\]  In \autoref{abelsurfacesection} we prove a theorem for abelian surfaces analogous to the Beauville-Voisin result by replacing rational curves with hyperelliptic curves. 
\black 
We say that a point $a\in A(\kk)$ is \textit{hyperelliptic} if some nonzero multiple of $a$ lies in the image of a morphism $f \colon H\to A$, where~$H$ is a hyperelliptic curve over $\kk$ such that  negation on $A$ induces the hyperelliptic involution on $H$ \black (see \autoref{goodpoint}). Our first theorem is the following.

\begin{theo}\label{abel} (cf.~\autoref{abelsurface1}) Let $A$ be an abelian surface over an algebraically closed field $\kk$ and let $a,b\in A(\kk)$. Suppose there exist nonzero integers $m,m'$ such that $a$, $b$, and $ma+m'b$ are hyperelliptic points.  Let $B_{a,b}$ be the divisible hull of the subgroup of $A(\kk)$ generated by $a,b$ (that is, $x\in B_{a,b}$ if there exists some nonzero $\ell \in\Z$ such that $\ell x\in\langle a,b\rangle$).  Then for every $c,d\in B_{a,b}$   the zero-cycle 
\[z_{c,d}:=[c+d]-[c]-[d]+[0]\] vanishes in the Albanese kernel $F^2(A)$.
\end{theo}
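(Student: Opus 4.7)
The strategy rests on two structural facts about zero-cycles on an abelian surface together with one clean geometric consequence of the hyperelliptic hypothesis. First, $F^2(A)$ is torsion-free: Rojtman's theorem identifies the torsion subgroup of $\CH_0(A)$ with $A(\kk)_{\tor}$ via the Albanese map, and $F^2(A)$ is precisely the kernel of that map. Second, the map $(a,b) \mapsto z_{a,b}$ is a symmetric $\Z$-bilinear pairing $A(\kk) \times A(\kk) \to F^2(A)$; on an abelian surface this follows from the theorem of the cube, equivalently from the vanishing of the third Pontryagin power $(F^1)^{*3}$ in $\CH_0(A)$.

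With these in hand, the first substantive step is to show that $z_{x,x} = 0$ for every hyperelliptic point $x$. Suppose $nx$ lies in the image of $f\colon H \to A$ with $f \circ \iota = -f$, where $\iota$ is the hyperelliptic involution. For any two points $p, q \in H$, the fibers $p + \iota(p)$ and $q + \iota(q)$ of the degree-two map $H \to \mathbb{P}^1$ are linearly equivalent on $H$, and pushing this equivalence forward by $f$ gives the identity $[f(p)] + [-f(p)] = [f(q)] + [-f(q)]$ in $\CH_0(A)$. Specializing $q$ to a Weierstrass point $w$ forces $f(w) \in A[2]$, and Rojtman's theorem then yields $2\bigl([f(w)] - [0]\bigr) = 0$, so $2[f(w)] = 2[0]$. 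Hence $[y] + [-y] = 2[0]$ for every $y$ in the image of $f$, that is, $z_{y,-y} = 0$. Taking $y = nx$ and expanding by bilinearity, $-n^2 z_{x,x} = z_{nx,-nx} = 0$, and torsion-freeness of $F^2(A)$ forces $z_{x,x} = 0$.

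Applying this to the three hyperelliptic points produces $z_{a,a} = z_{b,b} = z_{ma+nb,\,ma+nb} = 0$. Expanding the last relation by bilinearity and symmetry,
\[
0 \;=\; z_{ma+nb,\,ma+nb} \;=\; m^2 z_{a,a} + 2mn\,z_{a,b} + n^2 z_{b,b} \;=\; 2mn\,z_{a,b},
\]
so $mn \neq 0$ combined with torsion-freeness gives $z_{a,b} = 0$. For arbitrary $c, d \in B_{a,b}$, choose nonzero integers $l_1, l_2$ with $l_1 c = \alpha a + \beta b$ and $l_2 d = \gamma a + \delta b$; bilinearity then yields
\[
l_1 l_2\,z_{c,d} \;=\; z_{l_1 c,\, l_2 d} \;=\; \alpha\gamma\,z_{a,a} + (\alpha\delta + \beta\gamma)\,z_{a,b} + \beta\delta\,z_{b,b} \;=\; 0,
\]
and a final appeal to torsion-freeness gives $z_{c,d} = 0$.

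The main obstacle is to secure the two structural inputs in precisely the form needed: the torsion-freeness of $F^2(A)$ is clean (Rojtman), but the integral bilinearity of $z$ is subtler. Rationally, bilinearity is a consequence of Beauville's decomposition; passing to integral coefficients requires combining that rational statement with Rojtman's torsion-freeness to rule out any residual obstruction in $F^3$. Once these inputs are available, the hyperelliptic hypothesis contributes only the single identity $z_{x,x} = 0$ per hyperelliptic point, and the remainder of the argument is pure linear algebra inside the torsion-free abelian group $F^2(A)$.
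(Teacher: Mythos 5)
Your proposal is correct and follows essentially the same route as the paper: torsion-freeness of $F^2(A)$ via Rojtman, integral bilinearity of $(a,b)\mapsto z_{a,b}$ (which the paper secures by showing $F^3(A)$ is torsion, hence zero), the vanishing $z_{x,x}=0$ for hyperelliptic points via pushforward of the principal divisor $[q]+[\iota(q)]-2[w]$, and then the expansion of $z_{ma+nb,\,ma+nb}$ to isolate $2mn\,z_{a,b}$. One small imprecision: Rojtman's theorem alone does not give $2\bigl([f(w)]-[0]\bigr)=0$, since it only identifies the \emph{torsion} of $F^1(A)$ with $A(\kk)_{\mathrm{tor}}$; the clean justification (and the one the paper uses) is that $-2\bigl([f(w)]-[0]\bigr)=z_{f(w),f(w)}$ is annihilated by $2$ by bilinearity and hence vanishes in the torsion-free group $F^2(A)$.
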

 For the purposes of the introduction we chose to state \autoref{abel} in a simplified form; a much more general statement can be found in \autoref{abelsurface1}. 
 We emphasize that the points $a,b,ma+nb$ may lie in the images of morphisms from three distinct hyperelliptic curves. 
  The proof of \autoref{abel} involves Rojtman's theorem (\cite{Rojtman1980}) and two simple steps: (i) if $a\in A$ is a hyperelliptic point, then $z_{a,-a}=0$, and (ii) the zero-cycle $z_{a,b}$ is bilinear on $a,b$.   

The main motivation for looking at hyperelliptic curves mapping to $A$, besides that they give easy relations in $F^2(A)$, is that there is a natural way to construct such curves by pulling back rational curves in the Kummer surface $K=\Kum(A)$ associated to $A$. In particular, if Bogomolov's conjecture is true for the Kummer surface $K$, that is, if every point of $K(\QQ)$ lies on a rational curve, \black then  \autoref{abel} gives that  Beilinson's conjecture will be true for both $K$ and $A$. In \autoref{curvessection}, following this recipe, we construct a large countable collection of hyperelliptic curves mapping to an abelian surface isogenous to a product of elliptic curves.
\black 

 \begin{theo}\label{infinite_H:intro} 
Let $\kk$ be an algebraically closed field admitting an embedding $\kk\hookrightarrow\C$. Let~$A$ be an abelian surface over  $\kk$  and suppose $A$ is isogenous to a product $E\times E'$ of two elliptic curves. 
 For infinitely many positive integers $g\geq 2$, there exist infinitely many non-isomorphic genus $g$ hyperelliptic curves $H$ with maps $f \colon H\to A$ such that 
 negation on $A$ induces the hyperelliptic involution on $H$ \black and such that $(\mu\circ f)(H)$ is birational to $H$ for all 
 isogenies $\mu$ from~$A$.
\end{theo}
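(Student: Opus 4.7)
The approach is to exploit the correspondence between hyperelliptic curves on $A$ (with negation inducing the hyperelliptic involution) and smooth rational curves on the Kummer surface $K = \Kum(A)$. If $R \subset K$ is a smooth rational curve not contained in any of the sixteen exceptional divisors arising from $2$-torsion, then its preimage under the rational double cover $A \dashrightarrow K$ is a $[-1]$-invariant curve, whose normalization $H$ admits a degree-$2$ map $H \to R \cong \mathbb{P}^1$ with deck involution induced by $[-1]_A$. When $g(H) \geq 2$, this $H$ is a hyperelliptic curve of the required kind, and Riemann--Hurwitz determines $g(H)$ from the number of intersection points of $R$ with the exceptional locus of $K$. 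The task thus reduces to producing, on $K$, infinitely many non-isomorphic rational curves whose lifts have genus $g$, for each of infinitely many $g \geq 2$.

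Since $A$ is isogenous to $E \times E'$, choose an isogeny $\lambda : A \to E \times E'$. Because $\lambda$ commutes with $[-1]$, it descends to a finite map $\bar\lambda : K \to \Kum(E \times E')$ that is étale on the smooth locus, so every smooth rational curve on $\Kum(E \times E')$ avoiding the exceptional divisors pulls back to a disjoint union of smooth rational curves on $K$. It thus suffices to construct enough rational curves on $\Kum(E \times E')$. The natural source is the images of the graphs $\Gamma_\phi \subset E \times E'$ of isogenies $\phi : E \to E'$: each $\Gamma_\phi$ is a $[-1]$-stable elliptic curve, so its image $R_\phi = \Gamma_\phi/[-1] \cong \mathbb{P}^1$ is a smooth rational curve on the Kummer surface. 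By the isogeny hypothesis there are infinitely many such $\phi$, and translating by the sixteen $2$-torsion points of $E \times E'$ (which act as automorphisms of $\Kum(E \times E')$) further multiplies the supply; the resulting classes in $\mathrm{NS}(\Kum(E \times E'))$ are distinguished by their intersection numbers with the two elliptic-fibre classes coming from the projections.

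The intersection of $R_\phi$ with the exceptional locus grows linearly in $\deg(\phi)$, so the genera of the lifts to $A$ are unbounded; since $\mathrm{NS}(K)$ has finite rank, only finitely many rational-curve classes correspond to a fixed genus, and infinitely many genera must therefore appear. For each such genus, the remaining freedom in the choice of $\phi$ and of $2$-torsion translates (together with the various components of the pullback under $\bar\lambda$) yields an infinite family of non-isomorphic hyperelliptic lifts.

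The main obstacle is the final birationality condition: for every isogeny $\mu$ from $A$, the image $(\mu \circ f)(H)$ must be birational to $H$. The constructed $f : H \to A$ is birational onto its image by design, so $\mu \circ f$ fails birationality precisely when some non-trivial element of $\ker\mu$ stabilizes $f(H)$ set-wise. The translation stabilizer $\mathrm{Stab}(f(H)) \subset A$ is a subgroup that is finite for any curve not of the form $C + B$ with $B$ a positive-dimensional abelian subvariety; the plan is to verify, for our explicit family of curves built from graphs of isogenies and their $2$-torsion translates, that $\mathrm{Stab}(f(H)) = \{0\}$ for infinitely many members in each realized genus, discarding the proper closed exceptional subfamily on which it fails.
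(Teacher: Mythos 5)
There is a fatal gap at the heart of your construction: the rational curves you produce on $\Kum(E\times E')$ all lift to curves of genus $1$, never to hyperelliptic curves of genus $g\geq 2$. If $\phi:E\to E'$ is an isogeny, the preimage of $R_\phi=\Gamma_\phi/\langle -1\rangle$ under the double cover $E\times E'\dashrightarrow \Kum(E\times E')$ is $\Gamma_\phi$ itself, which is already a smooth elliptic curve; its normalization is $\Gamma_\phi\cong E$. Your claim that ``the intersection of $R_\phi$ with the exceptional locus grows linearly in $\deg(\phi)$'' is false: $\Gamma_\phi\cap(E\times E')[2]=\{(x,\phi(x)):x\in E[2]\}$ consists of exactly four points for every $\phi$, so the double cover $\Gamma_\phi\to R_\phi$ is branched at exactly four points, consistent with genus $1$ by Riemann--Hurwitz. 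The same holds for every $2$-torsion translate of $\Gamma_\phi$. So no member of your family satisfies $g\geq 2$, and moreover these genus $1$ curves are exactly the ones for which the final birationality condition genuinely fails: $\Gamma_\phi$ is stabilized by translation by all of its own points (a positive-dimensional abelian subvariety), so $\mu(\Gamma_\phi)$ need only be isogenous, not birational, to $\Gamma_\phi$. The hard content of the theorem is precisely to manufacture rational curves on the Kummer surface whose lifts have large genus, and graphs of isogenies cannot do this.

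The paper's route is different in exactly this respect. It equips $\Kum(E\times E')$ with Inose's pencil and takes Mordell--Weil sums $\mathcal{P}=pA_{22}\oplus qA_{33}\oplus rA_{23}\oplus sA_{32}$ of exceptional sections; these are rational curves that are \emph{not} images of abelian subvarieties. The genus of the lift is then controlled from above by the intersection number $\mathcal{P}\cdot D=8n(p,q,r,s)-2$ (lattice-theoretic computation via the height pairing) and from below by counting \emph{transverse} intersections with $D$, which requires the Ulmer--Urz\'ua bound on tangencies with the Betti foliation --- this is where the hypothesis $\kk\hookrightarrow\C$ enters, and it has no analogue in your argument. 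Two further points in your sketch would also need repair even if the genus problem were fixed: (i) the claim that finite rank of $\mathrm{NS}(K)$ forces only finitely many rational-curve classes per genus does not follow (infinitely many classes can have the same pairing with $D$); the paper instead gets infinitely many non-isomorphic $H$ per genus by varying $E,E'$ within their isogeny classes and invoking the finiteness of bounded-degree elliptic subcovers of a fixed curve; and (ii) your treatment of the translation stabilizer is only a plan, whereas the paper must and does prove triviality of the stabilizer by an explicit function-field computation for $2$-torsion translations acting on sections of Inose's pencil (the non-$2$-torsion case being the easy one).
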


 The proof involves considering   the Kummer surface $K=\Kum(E\times E')$ associated to $E\times E'$ and  giving it the structure of an elliptic fibration. Using addition in the Mordell-Weil lattice we produce infinitely many rational curves in $K$.  Each choice of a Mordell-Weil element pulls back to a hyperelliptic curve in $E\times E'$, with genus lying in a certain explicit interval depending on the canonical height of the corresponding Mordell-Weil element. We show that both endpoints of this interval can be made arbitrarily large, which gives us curves~$H$ of arbitrarily large genus. Finally, to obtain infinitely many non-isomorphic hyperelliptic curves of the same genus,  we replace each of $E$ and $E'$ by an isogenous curve \black and then consider the same construction,  
 noting that over an algebraically closed field of characteristic $0$ any isogeny class of elliptic curves has infinitely many isomorphism classes.

The hyperelliptic curves we construct can be made very explicit. In fact, we can algorithmically write down their Weierstrass equations and the morphisms to $A$. See \autoref{example:explicit} for examples with genus $g=2$ and $6$. Heuristically, we expect that for every positive integer $g\equiv 2\mod 4$, there are infinitely many non-isomorphic hyperelliptic curves~$H$ satisfying the conditions of \autoref{infinite_H:intro}  (see \autoref{unexpectedintersection}).

\subsection{Progress towards Beilinson's Conjecture} Coming back to \autoref{beilconj}, 
the advantage of \autoref{abel} is that it requires much less than Bogomolov's conjecture. Indeed, suppose that the abelian surface $A/\QQ$ is defined over an algebraic number field $L$. Proving \autoref{beilconj} amounts to showing that for every finite extension $F/L$,  $z_{a,b}=0$ for all $a,b\in A(F)$. \autoref{abel} together with the Mordell--Weil theorem reduce this question to finding only finitely many hyperelliptic points in $A(F)$, even though the rank of $A(F)$ can be arbitrarily large (see \autoref{beilreduce1}). 

 In the special case when $A$ is isogenous to a product $E\times E'$, much more can be said. First, we only need to produce $\rk(E(F))\cdot\rk(E'(F))$ hyperelliptic points (see \autoref{elliptic2}). Second and more importantly, if we assume that $A[2]\subset A(F)$, then infinitely many of the sections of the elliptic fibration we use in \autoref{infinite_H:intro} are defined over $F$, yielding an infinite collection of hyperelliptic curves $H/F$ mapping to $E\times E'$.

 In \cite{GazakiLove2022}, the authors considered product surfaces $A=E\times E'$ together with a small collection (up to six) of genus $2$ \black hyperelliptic curves $H$ mapping into $A$. For some pairs $E,E'$, hyperelliptic points arising from the image of $H$ were sufficient to verify that $z_{a,b}=0$ for all $a,b\in A(\Q)$. Using a larger collection of hyperelliptic curves as constructed in the proof of \autoref{infinite_H:intro}, we can verify this result for a substantially larger collection of pairs $E,E'$; see \autoref{computations}. It is worth pointing out that even though it is heuristically rare for curves of large genus to have \black rational points, we did manage to find many new examples where hyperelliptic curves of  genus greater than or equal to $6$ produced \black new relations over $\Q$. Moreover, we found examples where we could not find enough relations using only \black the sections of the elliptic fibration of $\Kum(E\times E')$, but  by replacing each of $E$ and $E'$ by an isogenous curve and applying the same construction, enough relations could be produced. \black 
\black


\begin{rems} (1.) Given any two hyperelliptic curves $H_1,H_2$ from \autoref{infinite_H:intro} with corresponding maps $f_1,f_2$, there are no relations of the form $m_1f_1(H_1)=m_2f_2(H_2)$ for any nonzero integers $m_1,m_2$; this is due to the fact that multiplication by $m_i$ is a self-isogeny of~$A$ and so $m_if_i(H_i)$ is birationally equivalent to $H_i$. Due to this, \autoref{infinite_H:intro} gives us a huge untapped source of hyperelliptic points with no obvious relations between them.



 (2.)  We note that even for a simple Jacobian $J$ of a genus $2$ curve there are old constructions (\cite{Humbert, Bost/Mestre}) that can be used to produce countably many genus $2$ (and hence hyperelliptic) curves mapping to $J$ (see \autoref{oldconstructions} for more details). 
 
\end{rems}

\subsection*{Results in higher dimensions} 
In \autoref{abvars} we discuss extensions of \autoref{abel} to abelian varieties of arbitrary dimension. In fact, we prove an analog of \autoref{abel} even when $\dim(A)\geq 3$  with a slightly more involved argument (cf.~\autoref{beilreducehigher}). Given that in dimension $\geq 3$ the corresponding Kummer variety $K=\Kum(A)$ is usually not Calabi-Yau, it might be too optimistic to expect that we have enough hyperelliptic curves. However, this could \black  be true in special cases, for example for Jacobians of hyperelliptic curves (see \cite[Theorem 7.1, Remark 7.7]{bogomolov_tschinkel} for some motivation towards these expectations).

\subsection{Acknowledgement} The first author's research was partially supported by the NSF grants DMS-2001605 and DMS-2302196. The second author was supported by a CRM--ISM postdoctoral fellowship. We are truly grateful to Professor Arnaud Beauville who helped us simplify the proof of \autoref{abel} significantly. We would also like to heartily thank Professors Wayne Raskind, Ari Shnidman, Ravi Vakil and Akshay Venkatesh who showed interest in our work and for useful discussions.  Lastly, we would like to thank the referee whose suggestions improved significantly the paper.  
\vspace{5pt} 

\section{Rational Equivalences on Abelian Surfaces}\label{abelsurfacesection}
\black 
For a smooth projective variety $X$ over an algebraically closed field $\kk$ we consider the Chow group $\CH_0(X)$ of zero-cycles modulo rational equivalence on $X$.

\begin{notn} Following the introduction, we will denote by $F^1(X)$ the kernel of the degree map, $\CH_0(X)\xrightarrow{\deg}\Z$, and by $F^2(X)$ the kernel of the Albanese map, $F^1(X)\xrightarrow{\alb_X}\Alb_X(\kk)$. 
\end{notn}
 The Albanese variety $\Alb_X$ of $X$ is an abelian variety, which is dual to the Picard variety of $X$.  When $X$ is an abelian variety, $\Alb_X=X$, while when $X$ is a $K3$ surface, $\Alb_X=0$. 
 Since $\kk$ is algebraically closed,  Rojtman's theorem  (cf.~\cite{Rojtman1980}) gives that the Albanese map induces an isomorphism between the torsion subgroups of $F^1(X)$ and $\Alb_X(\kk)$. This in particular implies that $F^2(X)$ is torsion-free. This information will be used heavily in the proofs of the main results. Namely, in order to show that a certain zero-cycle $z\in F^2(X)$ vanishes, it is enough to show $z$ is torsion. 
  
\subsection{Hyperelliptic points}\label{sec:hyperelliptic} Let $A$ be an abelian surface over $\kk$. We introduce the notion of a \textit{hyperelliptic point}, which will be used throughout the article. For the purposes of this paper, a \emph{hyperelliptic curve} is a smooth projective curve $H$ over $\kk$ of genus $g\geq 1$\footnote{We allow hyperelliptic curves to be genus $1$ in order to simplify the proofs of results such as \autoref{Tschinkelreprove} and \autoref{easypoints}. In these cases it is important to remember that the hyperelliptic involution and the set of Weierstrass points depend on the choice of map $H\to\mathbb{P}^1$.} equipped with a prescribed degree $2$ map $H\to\mathbb{P}^1$. Any such curve comes equipped with a \emph{hyperelliptic involution} $\iota \colon H\to H$ determined by the property that $H\to\mathbb{P}^1$ is invariant under $\iota$, and the fixed points of $\iota$ in $H(\kk)$ are \emph{Weierstrass points} of $H$. 

\begin{defn}\label{goodpoint} A point $a\in A(\kk)$ is called hyperelliptic if there exists a hyperelliptic curve~$H$ and a morphism $f \colon H\to A$ with $f\circ \iota=-f$ such that a nonzero multiple of $a$ lies in the image of $f$. 
\end{defn}
 
\begin{lem}\label{Weierstrass} Let $f \colon H\to A$ be a morphism from a hyperelliptic curve $H/\kk$ to $A$. Then $f\circ\iota=-f$ \black if and only if there exists a  Weierstrass point of $H$ mapping under $f$ to a $2$-torsion point of $A$. 
\end{lem}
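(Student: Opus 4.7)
The plan is to prove both directions by studying the auxiliary morphism $g:H\to A$ defined by $g(x) := f(\iota(x)) + f(x)$, and showing that the condition $f\circ\iota = -f$ is equivalent to $g$ being identically zero.

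For the forward direction ($\Rightarrow$), I would simply note that since $H$ is hyperelliptic of genus $g\geq 1$, it has at least one Weierstrass point $w$ (indeed, at least $2g+2$ of them, as ramification points of the chosen degree $2$ map $H\to\mathbb{P}^1$). Since $\iota(w)=w$, the assumption $f\circ\iota=-f$ immediately gives $f(w)=f(\iota(w))=-f(w)$, so $2f(w)=0$ in $A$.

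For the reverse direction ($\Leftarrow$), the key observation is that $g\circ\iota = f + f\circ\iota = g$, so $g$ is $\iota$-invariant. By the universal property of the categorical quotient $H\to H/\iota = \mathbb{P}^1$ (the given degree $2$ map), $g$ factors through $\mathbb{P}^1$. Since $A$ is an abelian variety and $\mathbb{P}^1$ is rational, any morphism $\mathbb{P}^1\to A$ is constant; hence $g$ is a constant map to some point $P\in A(\kk)$. Now I evaluate at a Weierstrass point $w$ for which $f(w)$ is $2$-torsion: $P = g(w) = f(\iota(w)) + f(w) = 2f(w) = 0$. Thus $g\equiv 0$, which is exactly the relation $f\circ\iota = -f$.

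I do not expect any serious obstacle here; the only subtlety is invoking the quotient $H/\iota\cong\mathbb{P}^1$ together with the rigidity principle that $\mathbb{P}^1$ admits no nonconstant morphism to an abelian variety, both of which are standard. The argument works uniformly for all $g\geq 1$ (including the degenerate genus $1$ case permitted by the paper's convention, where the four Weierstrass points are the preimages of the branch points of the fixed degree $2$ map).
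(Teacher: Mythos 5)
Your proof is correct. Both your argument and the paper's share the same skeleton: show that the auxiliary map $p\mapsto f(\iota p)+f(p)$ is constant on $H$, then evaluate that constant at a Weierstrass point to get the equivalence with $2$-torsion. Where you differ is in how the constancy is established. The paper factors $f$ through the embedding $H\to\Jac(H)$, writes $f(p)=\psi([p]-[q])+f(q)$ for a homomorphism $\psi$, and uses the fact that $[\iota p]+[p]-[\iota q]-[q]$ is a principal divisor (it even notes in a footnote that Weil reciprocity gives the same conclusion). You instead observe that $g=f\circ\iota+f$ is $\iota$-invariant, hence factors through the categorical quotient $H/\iota\cong\mathbb{P}^1$, and then invoke the rigidity fact that every morphism $\mathbb{P}^1\to A$ is constant. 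Your route is a bit more elementary --- it avoids the Jacobian entirely and uses only the quotient map and the absence of rational curves on abelian varieties, a fact the paper already uses in \autoref{Tschinkelreprove}. The paper's divisor-theoretic phrasing has the advantage of setting up the pushforward and Weil-reciprocity computations that reappear in \autoref{odd} and \autoref{odd2}, but as a self-contained proof of this lemma your version is complete, and your handling of the forward direction and of the genus $1$ edge case is fine.
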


\begin{proof}
	Fix a base point $q\in H(\kk)$, so that an embedding $H\to \Jac(H)$ of $H$ in its Jacobian variety is determined by $p\mapsto [p]-[q]$. By the universal property of $\Jac(H)$, $f$ factors through this embedding. Since every morphism of abelian varieties is a homomorphism followed by translation, there exists a homomorphism $\psi \colon \Jac(H)\to A$ such that
	\[f(p)=\psi([p]-[q])+f(q).\]
	Now for any $p\in H(\kk)$, the divisor $[\iota p]+[p]-[\iota q]-[q]$ is principal, so
	\begin{align*}
		f(\iota p)+f(p)&=\psi([\iota p]+[p]-2[q])+2f(q)\\
		&=\psi([\iota q]-[q])+2f(q)\\
		&=f(\iota q)+f(q).
	\end{align*}
	Thus the value of $f(\iota p)+f(p)$ is constant\footnote{This also follows immediately from Weil reciprocity (\autoref{eq:WR} with $r=1$).} for all $p\in H(\kk)$, and we have $f\circ\iota=-f$ if and only if this constant is zero. In particular, 
	 for a Weierstrass point $q$, we have \black $f\circ\iota=-f$ if and only if $f(q)$ is $2$-torsion.
	 
\end{proof}
 The proof of the above lemma in fact shows that if $f\circ\iota=-f$, then $f$ maps every Weierstrass point of $H$ to a $2$-torsion point of $A$.  

\autoref{goodpoint} and \autoref{Weierstrass} clearly generalize to higher dimensional abelian varieties and will be used in \autoref{abvars}. The following lemma which is 
 unique to \black abelian surfaces 
can also be found \black in \cite[Lemma 4.1]{bogomolov_tschinkel}.  

For an abelian surface $A$ over $\kk$ we consider the quotient $A/\langle -1\rangle$ by the negation action. This surface has sixteen singularities corresponding to the sixteen $2$-torsion points of $A$. The Kummer surface $K$ associated to $A$ is the $K3$ surface obtained by resolving the singularities of $A/\langle -1\rangle$.  
\black 
\begin{lem}\label{Tschinkelreprove} Let $K$ be the Kummer surface associated to the abelian surface $A$. Then the set of images of non-constant morphisms $\mathbb{P}^1\to K$ is in one-to-one correspondence with the set of images of morphisms $f \colon H\to A$ from hyperelliptic curves $H$ such that $f\circ\iota=-f$.
	
\end{lem}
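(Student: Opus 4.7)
The plan is to set up the correspondence using the diagram $A \xrightarrow{\pi} A/\langle -1\rangle \xleftarrow{\rho} K$, where $\pi$ is the quotient by the negation involution and $\rho$ is the minimal resolution of the sixteen nodes. In the forward direction, given a non-constant $f:H\to A$ with $f\circ\iota=-f$, the composition $\pi\circ f$ is $\iota$-invariant and hence factors through the quotient $H/\iota\cong\mathbb{P}^1$, giving a morphism $\bar{f}:\mathbb{P}^1\to A/\langle -1\rangle$. Since $\mathbb{P}^1$ is a smooth projective curve and $\rho$ is birational, the resulting rational map $\mathbb{P}^1\dashrightarrow K$ extends to a morphism $\tilde{f}:\mathbb{P}^1\to K$ whose image is the strict transform of $\bar{f}(\mathbb{P}^1)$. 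I would associate $f(H)\subset A$ with $\tilde{f}(\mathbb{P}^1)\subset K$; in the degenerate case where $f$ is constant at a $2$-torsion point $p$, I match it with the exceptional divisor of $\rho$ above $\pi(p)$.

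For the reverse direction, given a non-constant $g:\mathbb{P}^1\to K$, I consider $\rho\circ g$. If $g(\mathbb{P}^1)$ is one of the sixteen exceptional divisors, it lies over a single $2$-torsion point $p\in A$, and I send it back to the constant map $H\to\{p\}$ (which satisfies $f\circ\iota=-f$ since $p=-p$). Otherwise $\rho\circ g$ is non-constant, and I set $C:=\pi^{-1}(\rho(g(\mathbb{P}^1)))\subset A$. The key observation is that $C$ must be irreducible: if instead $C=C_1\cup(-C_1)$ with $C_1\neq-C_1$, then $\pi|_{C_1}$ would be birational onto $\rho(g(\mathbb{P}^1))$, exhibiting $C_1$ as a rational curve inside the abelian variety $A$; but every morphism $\mathbb{P}^1\to A$ is constant, so this is impossible. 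Hence $C$ is irreducible and stable under negation, and its normalization $H$ carries the induced involution with quotient $\mathbb{P}^1$. This makes $H$ a hyperelliptic curve of genus $\geq 1$ (again because $A$ has no rational curves), and the composition $f:H\to C\hookrightarrow A$ satisfies $f\circ\iota=-f$.

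It remains to verify that these two constructions are mutual inverses on the level of images, but this is essentially tautological from the setup: both directions transport curves across the diagram via the preimage under $\pi$ and the strict transform under $\rho$, operations which are inverse to one another on the loci in question (away from the exceptional divisors, which are handled separately by the $2$-torsion case). I expect the main technical step to be the irreducibility of $C$ in the reverse direction, since it is the decisive geometric input and depends crucially on the absence of non-constant maps $\mathbb{P}^1\to A$.
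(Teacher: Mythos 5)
Your proposal is correct and follows essentially the same route as the paper: both directions are transported across the diagram $A \xrightarrow{\pi} A/\langle -1\rangle \xleftarrow{\rho} K$, with the sixteen exceptional divisors matched to constant maps at $2$-torsion points and the absence of rational curves on $A$ supplying both the irreducibility of the preimage curve and the genus bound. The only cosmetic difference is that the paper forms $C$ as a fiber product over $\mathbb{P}^1$ and must then rule out the possibility $f\circ\iota=+f$, whereas you take $C=\pi^{-1}(\rho(g(\mathbb{P}^1)))$ and define the hyperelliptic involution on its normalization directly as the lift of negation, which makes $f\circ\iota=-f$ automatic.
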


\begin{proof}
	If $f \colon H\to A$ is constant and $f\circ \iota=-f$, then the image of $f$ is a $2$-torsion point. Each such point maps to a singularity of $A/\langle -1\rangle$ and blows up to a rational curve in $K$, so we associate to each $2$-torsion point of $A$ the corresponding blowup in $K$.
	
	Now we establish a bijection between images of non-constant maps $f \colon H\to A$ with $f\circ\iota=-f$ and images of non-constant maps $\mathbb{P}^1\to K$ whose image does not map to a singular point of $A/\langle -1\rangle$. First suppose $f \colon H\to A$ is non-constant and $f\circ\iota=-f$. The composition $H\to A\to A/\langle -1\rangle$ factors through the double cover  $H\to\mathbb{P}^1$, resulting in a non-constant map $\mathbb{P}^1\to A/\langle -1\rangle$. Since the image of this map is not contained in the singular locus, we obtain a rational map $\mathbb{P}^1\dashrightarrow K$, which extends to a morphism $\mathbb{P}^1\to K$ since $\mathbb{P}^1$ is a smooth projective curve and $K$ is projective.
	
	In the other direction, if $\mathbb{P}^1\to K$ is a non-constant map with image not contained in the exceptional locus, then $\mathbb{P}^1\to A/\langle -1\rangle$ is non-constant. Taking fiber products we obtain a diagram
	\[\begin{tikzcd}
		C\arrow[rr]\arrow[d] & & A\arrow[d,"\pi"] \\
		\mathbb{P}^1\arrow[r]& K \arrow[r] & A/\langle -1\rangle 
	\end{tikzcd}\]
	for some degree $2$ cover $C\to \mathbb{P}^1$. Let $H$ denote the normalization of $C$. Since $A$ contains no rational curves, $H$ must be irreducible and have genus at least $1$, and the composition $H\to C\to\mathbb{P}^1$ gives $H$ the structure of a hyperelliptic curve. Let $f \colon H\to C\to A$. By the diagram above we observe that $\pi\circ f\circ \iota=\pi\circ f$, and therefore $f\circ\iota=\pm f$. But $f\circ\iota=f$ would imply that $f$ factors through the map $H\to\mathbb{P}^1$, a contradiction because $A$ contains no rational curves. Hence we have $f\circ \iota=-f$ as desired.\black
	
\end{proof}

\subsection{Zero-cycles on Abelian Surfaces} Let $A$ be an abelian surface over $\kk$ with zero element $0\in A$. A straightforward computation gives the following explicit generators for the subgroups $F^1(A)$ and $F^2(A)$.
 \begin{eqnarray*}
&&F^{1}(A)=\langle[a]-[0]:a\in A(\kk)\rangle,\\
&&F^{2}(A)=\langle[a+b]-[a]-[b]+[0]:a,\;b\in A(\kk)\rangle. 
\end{eqnarray*}
\begin{notn} For $a,b\in A(\kk)$ we will denote by $z_{a,b}:=[a+b]-[a]-[b]+[0]$. 
\end{notn}
 

Given a vector $\mathbf{m}=(m_1,\ldots,m_n)\in \Z^n$ and a tuple $\mathbf{a}:=(a_1,\ldots,a_n)\in A(\kk)^n$, we define 
\[\mathbf{m}\cdot \mathbf{a}:= m_1a_1+\cdots+m_na_n\in A(\kk).\] The \emph{divisible hull} of a subgroup $\Gamma\leq A(\kk)$ is defined to be the set of elements $x\in A(\kk)$ for which some nonzero multiple of $x$ lies in $\Gamma$. In particular, if two subgroups $\Gamma_1,\Gamma_2$ are commensurable (that is, $\Gamma_1\cap \Gamma_2$ has finite index in both $\Gamma_1$ and $\Gamma_2$), then they have the same divisible hull.

The goal of this subsection is to prove the following more general version of \autoref{abel}. 
 
 \begin{theo}\label{abelsurface1} Let $A$ be an abelian surface over an algebraically closed field $\kk$ and let $\mathbf{a}=(a_1,\ldots,a_n)\in A(\kk)^n$.
 Suppose there exists a finite set $M\subseteq \Z^n$ such that
 \begin{itemize}
 	\item the set of tensors $\mathbf{m}\otimes \mathbf{m}$ for $\mathbf{m}\in M$ spans $\Sym^2\Q^n$ as a $\Q$-vector space, and
 	\item for each $\mathbf{m}\in M$, the point $\mathbf{m}\cdot\mathbf{a}$ is hyperelliptic.
 \end{itemize}
    Then for every $c,d\in B_{\mathbf{a}}$, where $B_{\mathbf{a}}$ is the divisible hull of the subgroup of $A(\kk)$ generated by $a_1,\ldots,a_n$, \black the zero-cycle 
 	\[z_{c,d}:=[c+d]-[c]-[d]+[0]\] vanishes in $F^2(A)$.
 \end{theo}
 
 Put simply, in order to prove $z_{c,d}=0$ for all linear combinations $c,d$ of $a_1,\ldots,a_n$, it suffices to show that a certain finite list of linear combinations of $a_1,\ldots,a_n$ are hyperelliptic points. \black
 
 Since the statement of the theorem is somewhat technical, we begin with a few special cases. First note that \autoref{abel} follows immediately by taking $n=2$ and $M=\{(1,0),(0,1),(m,m')\}$ for any nonzero $m,m'\in\Z$, because the tensors $(1,0)\otimes(1,0)$, $(0,1)\otimes(0,1)$, and $(m,m')\otimes(m,m')$ span $\Sym^2\Q^2$. 
 More generally, we have the following.
 \begin{cor}\label{cor:hyperelliptic pair sums}
 	 Let $A$ be an abelian surface over an algebraically closed field $\kk$ and let  $a_1,\ldots,a_n\in A(\kk)$ be hyperelliptic points. Suppose additionally that for any pair $1\leq i<j\leq n$, there exist nonzero integers $r_{i,j},s_{i,j}$ such that $r_{i,j}a_i+s_{i,j}a_j$ is hyperelliptic. Then for all $c,d$ in the divisible hull of the subgroup generated by $a_1,\ldots,a_n$, the zero-cycle $z_{c,d}$ vanishes in $F^2(A)$.
 \end{cor}
 \begin{proof}
 	Letting $e_i\in\Z^n$ denote the vector which is $1$ in the $i$-th component and $0$ otherwise, we can take 
 	\[M=\{e_i:1\leq i\leq n\}\cup\{r_{i,j}e_i+s_{i,j}e_j:1\leq i<j\leq n\}.\]
 \end{proof} 
 
 We now proceed with a proof of \autoref{abelsurface1}. \black The theorem will follow from the key lemmas \autoref{bilinear} and \autoref{odd}. 
 The first lemma is probably well-known to the experts, and it may have a simpler proof. For completeness we include a proof using some results from \cite{Gazaki2015}. 
 \begin{lem}\label{bilinear}  The map $A(\kk)\times A(\kk)\to F^2(A)$ given by $(a,b)\mapsto z_{a,b}$ is bilinear and symmetric. \black 
 \end{lem}
 \begin{proof}
 	The symmetry is clear. In \cite{Gazaki2015} the first author defined for an abelian variety $A$ over an arbitrary field $k$ a filtration $\{F^{r}(A)\}_{r\geq 0}$ extending the filtration $\CH_0(A)\supset F^1(A)\supset F^2(A)$. Among other properties, it follows by \cite[Proposition 3.4]{Gazaki2015} that there exists a well-defined homomorphism $\Psi \colon  A(k)\otimes A(k)\to F^2(A)/F^{3}(A)$, with $\Psi(a\otimes b)=z_{a,b}$. In the case of an abelian surface, it follows by \cite[Corollary 4.5]{Gazaki2015} that the group $F^3(A)$ is torsion, and hence over an algebraically closed field $\kk$ it vanishes. Thus, we have a linear map 
 	\[A(\kk)\otimes A(\kk)\to F^2(A), a\otimes b\mapsto z_{a,b},\] yielding the desired bilinearity. 
 \end{proof}
 An important consequence of \autoref{bilinear} is that there is a map $A(\kk)\otimes A(\kk)\to F^2(A)$ induced by $a\otimes b\mapsto z_{a,b}$. \black
 Since the group $F^2(A)$ is torsion-free, it follows by bilinearity that $z_{p,q}=0$ if $p$ or $q$ is a torsion point. 
 \begin{cor}\label{zaa} The Albanese kernel $F^2(A)$ vanishes if and only if $z_{a,a}=0$ for all $a\in A$.  
 \end{cor}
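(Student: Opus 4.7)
The forward implication is immediate: if $F^2(A) = 0$, then in particular every zero-cycle of the form $z_{a,a}$ vanishes. The content is therefore in the reverse direction, and the plan is to exploit the bilinearity established in \autoref{bilinear} together with the torsion-freeness of $F^2(A)$ coming from Rojtman's theorem.

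Assume $z_{a,a} = 0$ for every $a \in A(\kk)$. Since $F^2(A)$ is generated by the elements $z_{a,b}$ for $a,b \in A(\kk)$, it suffices to show that each such $z_{a,b}$ vanishes. By \autoref{bilinear}, the assignment $(a,b) \mapsto z_{a,b}$ is bilinear and symmetric, so for any $a, b \in A(\kk)$ one computes
\[
z_{a+b,\,a+b} \;=\; z_{a,a} \;+\; z_{a,b} \;+\; z_{b,a} \;+\; z_{b,b} \;=\; z_{a,a} \;+\; 2\,z_{a,b} \;+\; z_{b,b}.
\]
Under the standing assumption that all diagonal cycles vanish, the left-hand side and the terms $z_{a,a}$, $z_{b,b}$ are zero, so $2\,z_{a,b} = 0$ in $F^2(A)$.

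Now invoke Rojtman's theorem as recalled in the paragraph introducing the Albanese map: it implies that $F^2(A)$ is torsion-free. Hence $2\,z_{a,b} = 0$ forces $z_{a,b} = 0$. As $a,b$ were arbitrary and the classes $z_{a,b}$ generate $F^2(A)$, we conclude $F^2(A) = 0$. There is no substantive obstacle here; the only point to take care of is to verify that the bilinearity of \autoref{bilinear} is indeed symmetric (which it is), so that the cross-term $z_{a,b} + z_{b,a}$ collapses to $2 z_{a,b}$ rather than merely to a sum one cannot simplify.
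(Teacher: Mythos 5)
Your proof is correct and follows exactly the paper's route: the paper likewise expands $z_{a+b,a+b}$ by bilinearity and symmetry to get $2z_{a,b}=z_{a+b,a+b}-z_{a,a}-z_{b,b}$ and then concludes via the torsion-freeness of $F^2(A)$ from Rojtman's theorem. Your write-up just makes the final division-by-two step explicit where the paper leaves it implicit.
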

 \begin{proof}
 	The group $F^2(A)$ is generated by zero-cycles of the form $z_{a,b}$ with $a,b\in A$. Bilinearity and symmetry give
 	\begin{align}\label{eq:zab to hyperelliptic}
 		2z_{a,b}=z_{a,a}+z_{b,b}-z_{a+b,a+b},
 	\end{align}
 	from which the claim follows. 
 \end{proof}
  
 \begin{lem}\label{odd} Let $a\in A$ be a hyperelliptic point in the sense of \autoref{goodpoint}. Then $z_{a,a}=0$. 
 \end{lem}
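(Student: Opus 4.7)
The plan is to reduce $z_{a,a}=0$ to $z_{p,p}=0$ for some $p$ that lies in the image of the hyperelliptic map, and then to exploit the $g^1_2$ structure on $H$ together with Rojtman's theorem.

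By hypothesis, there is a hyperelliptic curve $H$, a point $h\in H(\kk)$ and a nonzero integer $n$ such that $f(h)=na=:p$, where $f:H\to A$ satisfies $f\circ\iota=-f$. The key input from $H$ is its $g^1_2$: for any point $h\in H(\kk)$ and any Weierstrass point $q\in H(\kk)$, the divisors $[h]+[\iota h]$ and $2[q]$ are both fibers of the double cover $H\to\mathbb{P}^1$, so they are linearly equivalent on $H$. Applying the pushforward $f_\ast:\CH_0(H)\to\CH_0(A)$ and using $f(\iota h)=-p$ together with \autoref{Weierstrass} (which forces $t:=f(q)$ to be a $2$-torsion point of $A$), we obtain the identity
\[
[p]+[-p]=2[t]\quad\text{in }\CH_0(A).
\]

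Next I would translate this into a vanishing of cycles of the form $z_{\bullet,\bullet}$. A direct computation gives $z_{p,-p}=2[0]-[p]-[-p]$, and likewise $z_{t,t}=[2t]-2[t]+[0]=2[0]-2[t]$ since $2t=0$. Combining with the displayed identity yields $z_{p,-p}=z_{t,t}$. Now I apply \autoref{bilinear}: bilinearity together with $2t=0$ forces $4z_{t,t}=z_{2t,2t}=0$, and since $F^2(A)$ is torsion-free (Rojtman's theorem), this gives $z_{t,t}=0$, hence $z_{p,-p}=0$. Another application of bilinearity yields $z_{p,p}=-z_{p,-p}=0$.

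Finally, since $p=na$, bilinearity gives $n^2 z_{a,a}=z_{na,na}=z_{p,p}=0$, and using once more that $F^2(A)$ is torsion-free (together with $n\neq 0$) I conclude $z_{a,a}=0$. The whole argument is essentially mechanical once the right linear equivalence on $H$ is written down; the only real step of substance is the pushforward identity $[p]+[-p]=2[t]$ coming from the hyperelliptic $g^1_2$, and that is exactly where the compatibility condition $f\circ\iota=-f$ (via \autoref{Weierstrass}) is used to guarantee that $t$ is $2$-torsion.
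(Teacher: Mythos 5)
Your argument is correct and is essentially the paper's own proof: both push forward the linear equivalence $[h]+[\iota h]\sim 2[q]$ along $f$, use \autoref{Weierstrass} to see that $f(q)$ is $2$-torsion, and then combine bilinearity of $z_{\bullet,\bullet}$ with the torsion-freeness of $F^2(A)$ (Rojtman) to conclude. The only cosmetic difference is that the paper reduces to $n=1$ at the outset, whereas you carry $p=na$ through and divide by $n^2$ at the end.
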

 \begin{proof} By assumption there exists a morphism $f \colon H\to A$  from a hyperelliptic curve $H$ such that $f\circ \iota=-f$,  and $na$ lies in the image of $f$ for some nonzero integer $n$. Since $z_{na,na}=n^2z_{a,a}$, we can reduce to the case $n=1$.  
 	By \autoref{Weierstrass} there is a Weierstrass point $w$ of $H$ such that $f(w)$ is a $2$-torsion point of $A$. \black The proper morphism $f$ induces a pushforward homomorphism 
 	\begin{align*}
 		f_\star \colon \Pic^0(H)&\to F^1(A)\\
 		[q]-[w]&\mapsto [f(q)]-[f(w)].
 	\end{align*} \black
 	Note that $[q]+[\iota(q)]-2[w]$  is the pullback of a divisor on $\mathbb{P}^1$ and is therefore \black a principal divisor on $H$.  Letting $a=f(q)$ and $p_0=f(w)$,  the relation $f\circ\iota=-f$ \black yields
 	
 	\begin{align*}
 		0=f_\star([q]+[\iota(q)]-2[w])=[a]+[-a]-2[p_0]=-z_{a,-a}+z_{p_0,p_0}.
 	\end{align*} 
 	\black
 	But $z_{p_0,p_0}=0$ since $p_0$ is a torsion point, so $z_{a,a}=-z_{a,-a}=0$ as desired. 
 	
 \end{proof}

 \begin{proof}[Proof of \autoref{abelsurface1}] 
 	
 	Let $c,d\in B_{\mathbf{a}}$. By definition there exist integers $r,s\geq 1$ such that $rc, sd$ are $\Z$-linear combinations of $a_1,\ldots,a_n$. \black Bilinearity gives $z_{rc,sd}=rsz_{c,d}$. Since $rs\neq 0$, it is enough to show $z_{rc,sd}=0$, which allows to reduce to the case when $c,d$ lie in the subgroup $\Gamma$ generated by $a_1,\ldots,a_n$.

 	We have a homomorphism $\Z^n\to A(\kk)$ given by $\mathbf{m}\mapsto \mathbf{m}\cdot\mathbf{a}$, and the image of this map is $\Gamma$. 
 	By \autoref{bilinear}, there is a well-defined symmetric linear map $A(\kk)\otimes A(\kk)\to F^2(A)$ induced by $a\otimes b\mapsto z_{a,b}$. Now consider the composition
 	\[\Phi:\Z^n\otimes \Z^n\to A(\kk)\otimes A(\kk)\to F^2(A).\]
 	Let $\mathbf{c}\in\Z^n$ map to $c\in \Gamma$ and $\mathbf{d}\in\Z^n$ map to $d\in \Gamma$. By assumption, the symmetric tensor $\mathbf{c}\otimes \mathbf{d}+\mathbf{d}\otimes\mathbf{c}\in \Z^n\otimes \Z^n$ can be written as a rational combination of the tensors $\mathbf{m}\otimes\mathbf{m}$ for $\mathbf{m}\in M$. Clearing denominators, we have integers $t_{\mathbf{m}}$ for each $\mathbf{m}\in M$ and $u\neq 0$ satisfying
 	\[u(\mathbf{c}\otimes \mathbf{d}+\mathbf{d}\otimes\mathbf{c})=\sum_{\mathbf{m}\in M} t_\mathbf{m}(\mathbf{m}\otimes\mathbf{m}).\]
 	Applying $\Phi$ to both sides we obtain
 	\[2uz_{c,d}=\sum_{\mathbf{m}\in M} t_\mathbf{m}z_{\mathbf{m}\cdot\mathbf{a},\mathbf{m}\cdot\mathbf{a}}.\]
 	Since $\mathbf{m}\cdot\mathbf{a}$ is hyperelliptic for all $\mathbf{m}\in M$, the right-hand side vanishes by \autoref{odd}. This implies $z_{c,d}$ is torsion, and hence it vanishes as desired. 
 	\black
 \end{proof}

\subsection*{Products of Elliptic Curves}
In the case of a product $A=E\times E'$ of two elliptic curves, we can say something more.  In this case we have an equality 
   \[F^2(E\times E')=\langle [(p,q)]-[(p,0)]-[(0,q)]+[(0,0)]:p\in E_1(\kk), q\in E_2(\kk)\rangle.\] 
  \begin{lem}\label{easypoints} The points $(p,0)$ and $(0,q)$ are hyperelliptic in the sense of \autoref{goodpoint}. 
  \end{lem}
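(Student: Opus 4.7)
The plan is to exhibit explicit hyperelliptic morphisms realizing $(p,0)$ and $(0,q)$, taking full advantage of the convention in \autoref{sec:hyperelliptic} that elliptic curves themselves qualify as genus $1$ hyperelliptic curves.

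Concretely, I would treat $E_1$ as a hyperelliptic curve by fixing a Weierstrass model and choosing the degree $2$ map $E_1\to\mathbb{P}^1$ given by the $x$-coordinate; the associated hyperelliptic involution is then the negation map $\iota=[-1]_{E_1}$, and the Weierstrass points are precisely the $2$-torsion points of $E_1$. Define the morphism
\[f:E_1\to E_1\times E_2,\qquad t\mapsto(t,0).\]
Since $f$ is a homomorphism of abelian varieties, $f\circ\iota=f\circ[-1]_{E_1}=[-1]_{E_1\times E_2}\circ f=-f$, which is exactly the compatibility condition required by \autoref{goodpoint}. (Alternatively, one can invoke \autoref{Weierstrass} directly: any $2$-torsion point $w\in E_1$ is a Weierstrass point, and $f(w)=(w,0)$ is $2$-torsion in $E_1\times E_2$.) The image of $f$ is $E_1\times\{0\}$, which contains $(p,0)=f(p)$, so $(p,0)$ is hyperelliptic in the sense of \autoref{goodpoint} with $n=1$. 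The argument for $(0,q)$ is identical after swapping the roles of $E_1$ and $E_2$ and using the morphism $t\mapsto (0,t)$ from $E_2$.

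There is no real obstacle here; the only subtlety worth noting is the genus $1$ convention for hyperelliptic curves, without which this clean construction would be unavailable. The footnote in \autoref{sec:hyperelliptic} is exactly what makes this short argument legitimate.
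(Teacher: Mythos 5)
Your proof is correct and is essentially the same as the paper's: both give $E_1$ the structure of a genus $1$ hyperelliptic curve with involution equal to negation and use the homomorphism $t\mapsto(t,0)$, which satisfies $f\circ\iota=-f$ automatically. The paper's version is just a more terse statement of exactly this argument, and your observation that the genus $1$ convention is the crucial enabling point matches the paper's footnote in \autoref{sec:hyperelliptic}.
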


\begin{proof}
	We can give $E$ the structure of a hyperelliptic curve by letting the hyperelliptic involution $\iota \colon E\to E$ be given by negation on $E$. Then the map $f \colon E\to E\times E'$ determined by $p\mapsto (p,0)$ evidently satisfies $f\circ\iota=-f$.
\end{proof}

This allows us to state a revised version of \autoref{abelsurface1}, which can be used to prove that certain finitely generated subgroups of $F^2(A)$ are trivial. This is a direct generalization of \cite[Corollary 4.11]{GazakiLove2022}.

\begin{cor}\label{elliptic1}
	Let $A=E_1\times E_2$ be a product of elliptic curves over $\kk$. Let $\mathbf{p}=(p_1,\ldots,p_{n_1})\in E_1(\kk)^{n_1}$ and $\mathbf{q}=(q_1,\ldots,q_{n_2})\in E_2(\kk)^{n_2}$. Suppose that for each $j=1,\ldots,n_1n_2$, there exists $\mathbf{r}_j\in\Z^{n_1}$ and $\mathbf{s}_j\in \Z^{n_2}$, satisfying
	\begin{itemize}
		\item the $n_1n_2$ tensors $\mathbf{r}_j\otimes \mathbf{s}_j$ span $\Q^{n_1}\otimes \Q^{n_2}$, and 
		\item the $n_1n_2$ elements $(\mathbf{r}_j\cdot \mathbf{p},\mathbf{s}_j\cdot \mathbf{q})\in A(\kk)$ 
		are hyperelliptic points.
	\end{itemize}
	Then for every $c,d$ in the divisible hull of the group generated by $(p_1,0),\ldots,(p_{n_1},0)$, \linebreak $(0,q_1),\ldots,(0,q_{n_2})$, $z_{c,d}$ vanishes in $F^2(A)$. 
\end{cor}
\begin{proof}
	Let $n=n_1+n_2$, and identify $\Z^n$ with $\Z^{n_1}\oplus\Z^{n_2}$. Let \[\mathbf{a}=((p_1,0),\ldots,(p_{n_1},0),(0,q_1),\ldots,(0,q_{n_2}))\in A(\kk)^n.\]
	For $i=1,2$, let $M_i$ be a finite set of vectors in $\mathbf{t}\in\Z^{n_i}$ such that the set $\mathbf{t}\otimes\mathbf{t}$ for $\mathbf{t}\in M_i$ spans $\Sym^2\Q^{n_i}$ (for instance as in the proof of \autoref{cor:hyperelliptic pair sums}). Let \[M=\{(\mathbf{r}_j,\mathbf{s}_j):1\leq j\leq n_1n_2\}\cup \{(\mathbf{t},0):\mathbf{t}\in M_1\}\cup\{(0,\mathbf{t}):\mathbf{t}\in M_2\}.\]
	We will show that $M$ satisfies the conditions of \autoref{abelsurface1}. 
	
	We first show that each $\mathbf{m}\in M$ maps to a hyperelliptic point under $\mathbf{m}\mapsto \mathbf{m}\cdot\mathbf{a}$. The vector $(\mathbf{r}_j,\mathbf{s}_j)$ maps to $(\mathbf{r}_j\cdot\mathbf{p},\mathbf{s}_j\cdot\mathbf{q})$, which is hyperelliptic by assumption. For each $\mathbf{t}\in M_1$, $(\mathbf{t},0)$ maps to $(\mathbf{t}\cdot\mathbf{p},0)$, which is hyperelliptic by \autoref{easypoints}. Likewise $(0,\mathbf{t})$ maps to a hyperelliptic point for each $\mathbf{t}\in M_2$.
	
	Now we show that the vectors $\mathbf{m}^{\otimes 2}:=\mathbf{m}\otimes\mathbf{m}$ span $\Sym^2\Q^n$ as we vary over $\mathbf{m}\in M$. We have a decomposition
	\[\Sym^2\Q^n=\Sym^2(\Q^{n_1}\oplus\{0\})\oplus \Sym^2(\{0\}\oplus \Q^{n_2})\oplus V,\]
	where $V$ is the image of the linear map $\Q^{n_1}\otimes \Q^{n_2}\to \Sym^2\Q^n$ defined by $u\otimes v\mapsto (u,0)\otimes (0,v)+(0,v)\otimes (u,0)$. By construction of $M_1$ and $M_2$, the subspace $\Sym^2(\Q^{n_1}\oplus\{0\})$ is spanned by $(\mathbf{t},0)^{\otimes 2}$ for $\mathbf{t}\in M_1$, and $\Sym^2(\{0\}\oplus\Q^{n_2})$ is spanned by $(0,\mathbf{t})^{\otimes 2}$ for $\mathbf{t}\in M_2$. Now for any $u\in \Q^{n_1}$ and $v\in \Q^{n_2}$, $u\otimes v$ can be written as $\sum_{j=1}^{n_1n_2}c_j \mathbf{r}_j\otimes\mathbf{s}_j$ for some coefficients $c_j\in\Q$, and so
	\[\sum_{j=1}^{n_1n_2} c_j\big((\mathbf{r}_j,\mathbf{s}_j)^{\otimes 2}-(\mathbf{r}_j,0)^{\otimes 2}-(0,\mathbf{s}_j)^{\otimes 2}\big)=(u,0)\otimes (0,v)+(0,v)\otimes (u,0).\]
	Since $(\mathbf{r}_j,\mathbf{s}_j)\in M$, and $(\mathbf{r}_j,0)^{\otimes 2}$ and $(0,\mathbf{s}_j)^{\otimes 2}$ are in subspaces that have already been proven to be in the span of $\mathbf{m}^{\otimes 2}$ for $\mathbf{m}\in M$, we can conclude that $V$ is in the span of $\mathbf{m}^{\otimes 2}$ for $\mathbf{m}\in M$.
\end{proof}

As a special case, when $n_1=n_2=1$, we have that if $(np,mq)$ is hyperelliptic for some nonzero integers $n,m$, then the zero-cycle $[(p,q)]-[(p,0)]-[(0,q)]+[(0,0)]$ 
is zero in $F^2(A)$.

\black

\subsection{Some Progress towards Beilinson's Conjecture} 
In this section we focus on Beilinson's conjecture, which we recall below.
\begin{conj}\label{beilconj2} (Beilinson, \cite{Beilinson1984}) Let $X$ be a smooth projective variety over $\QQ$. Then the Albanese map is injective. That is, $F^2(X)=0$. 
\end{conj}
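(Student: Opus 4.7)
The plan is to attempt a reduction from the full generality of smooth projective varieties over $\QQ$ to the specific cases treated in the rest of this paper (abelian varieties and Kummer surfaces). First, for $X$ with $\dim X \geq 3$, the Bloch--Srinivas decomposition of the diagonal reduces the vanishing of $F^2(X)$ to a statement about cycles supported on smooth projective surfaces inside $X$, so one may assume $\dim X = 2$. Next, for surfaces $X/\QQ$, one invokes a (conjectural) Chow--K\"unneth decomposition of the motive of $X$: when $p_g(X)=0$ this is Bloch's conjecture (known in many classes), while when $p_g(X)>0$ the transcendental motive should decompose into pieces controlled by abelian surfaces and $K3$ surfaces. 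Since Kummer $K3$'s are dense in the moduli of $K3$'s, a deformation argument then reduces the general $K3$ case to the Kummer case.

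For the abelian piece, the plan is to apply \autoref{abelsurface1} in dimension $2$, together with its higher-dimensional extension described in \autoref{abvars}. For a Kummer $K3$ surface $K=\Kum(A)$, use \autoref{Tschinkelreprove} to convert rational curves on $K$ into hyperelliptic curves $f\colon H\to A$ with $f\circ \iota=-f$; combined with \autoref{odd} and the bilinearity of \autoref{bilinear}, this yields vanishing of the relations $z_{a,b}$ in $F^2(A)$, which descends to vanishing in $F^2(K)$ via the quotient $A\to A/\langle -1\rangle$ in the spirit of Beauville--Voisin. The key feature exploited throughout is that $F^2$ is torsion-free by Rojtman's theorem, so it suffices to produce cycles that are torsion after multiplication by a nonzero integer.

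The main obstacle is supplying the hypothesis of \autoref{abelsurface1}, namely that \emph{every} point of $A(\QQ)$ is hyperelliptic in the sense of \autoref{goodpoint}. By \autoref{Tschinkelreprove}, this is essentially equivalent to Bogomolov's conjecture that every $\QQ$-point of $K=\Kum(A)$ lies on a rational curve defined over $\QQ$. \autoref{infinite_H:intro} shows that remarkably many such hyperelliptic curves exist on $A$ when $A$ is isogenous to a product of elliptic curves, but establishing that their images sweep out \emph{all} of $A(\QQ)$---rather than merely a dense or divisible subgroup---remains wide open. Thus the outlined approach reduces \autoref{beilconj2} to the combination of Bogomolov's conjecture for Kummer $K3$ surfaces, Bloch's conjecture for surfaces with $p_g=0$, and the existence of the Chow--K\"unneth decomposition, each of which is a major open problem in its own right; the hardest input, and the one this paper most directly addresses, is the first.
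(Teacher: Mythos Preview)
The statement you are attempting to prove is a \emph{conjecture}, not a theorem: the paper states it as \texttt{conj} and explicitly remarks that ``to the author's knowledge there is not a single example of a smooth projective surface $X$ with $p_g(X)>0$ that is known to satisfy this.'' There is no proof in the paper to compare your proposal against; the paper's contribution is to give \emph{partial evidence} toward the conjecture for abelian surfaces and their Kummer surfaces, via \autoref{abelsurface1}, \autoref{beilreduce1}, and the supply of hyperelliptic curves in \autoref{infinite_curves}.

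Your proposal is not a proof but a chain of conditional reductions, and several of the links are themselves either open or incorrect as stated. The Bloch--Srinivas decomposition does not reduce $F^2(X)=0$ for a threefold to a statement about surfaces unless one already knows something strong about $\CH_0(X)$ (e.g.\ that it is supported on a surface), which is essentially what one is trying to prove. The assertion that the transcendental motive of an arbitrary surface with $p_g>0$ ``decomposes into pieces controlled by abelian surfaces and $K3$ surfaces'' is not a known theorem, nor is it a standard conjecture. The deformation from a general $K3$ to a Kummer $K3$ does not propagate statements about $\CH_0$: Chow groups are notoriously discontinuous in families, and density of Kummer surfaces in moduli gives no control over zero-cycles on a non-Kummer $K3$. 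Finally, as you yourself note, even the abelian-surface case hinges on showing that every point of $A(\QQ)$ is hyperelliptic, which is equivalent to Bogomolov's conjecture and is wide open. So what you have written is a heuristic roadmap through several major open problems, not a proof; the paper makes no claim to have one.
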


We illustrate how \autoref{abelsurface1} can be used to investigate \autoref{beilconj2}. Let us consider an abelian surface $A$ defined over a number field $k$, with $A_{\kk}$ its base change to the algebraic closure. By \autoref{bilinear} we have a surjective linear map $A(\kk)\otimes A(\kk)\to F^2(A_{\kk})$ induced by $a\otimes b\mapsto z_{a,b}$. We can write $A(\kk)$ as the union of $A(L)$ over all finite extensions $L/k$, and each $A(L)$ is finitely generated by the Mordell-Weil theorem.

\begin{cor}\label{beilreduce1} 
	Let $A$ be an abelian surface over a number field $k$. Suppose that for every finite extension $L/k$, if we let $r$ be the Mordell-Weil rank of $A(L)$ and $\mathbf{a}\in A(L)^{r}$ a tuple of independent elements in $A(L)$, there exists $M\subseteq \Z^r$ satisfying the assumptions of \autoref{abelsurface1}. Then Beilinson's \autoref{beilconj2} is true for $A_{\kk}$. 
\end{cor}

\begin{proof}  
	For any $c,d\in A(\kk)$, there exists a number field $L/k$ with $c,d\in A(L)$. Let $r$ be the rank of $A(L)$ and $\mathbf{a}\in A(L)^{r}$ a tuple of independent elements in $A(L)$. Then the components of $\mathbf{a}$ generate a finite-index subgroup of $A(L)$, so $A(L)$ is contained in the divisible hull $B_{\mathbf{a}}$. We can conclude by \autoref{abelsurface1} that $z_{c,d}=0$. Since $F^2(A_{\kk})$ is generated by cycles of this form, the result follows.
\end{proof}

In practice we can use the above methods to obtain computational evidence towards Beilinson's conjecture. For instance, if $\rk(A(L))=r$, then in order to prove $z_{a,b}=0$ for all $a,b\in A(L)$, it is enough to find $\Z$-linearly independent points $a_1,\ldots, a_r\in A(L)$ such that each $a_i$ is hyperelliptic, and for every $i\neq j$ there is a hyperelliptic $\Z$-linear combination $n_ia_i+m_ja_j$, for some $n_i, m_j\neq 0$ (see \autoref{cor:hyperelliptic pair sums}).

In the case when $A$ is isogenous to a product $E_1\times E_2$ of elliptic curves, using \autoref{elliptic1} we actually need less.
\begin{cor}\label{elliptic2}  
	Let $A/\QQ$ be an abelian surface isogenous to a product  $E_1\times E_2$ of two elliptic curves. Suppose that $A$ and the isogeny are defined over a number field $L$ and $\rk(E_i(L))=n_i$ for $i=1,2$.  Then there exist a set of $n_1n_2$ points in $A(L)$ such that if all $n_1n_2$ of these points are hyperelliptic points, then $z_{a,b}$ vanishes for all $a,b\in A(L)$.  
\end{cor} 
\black
\begin{proof}
	 The case $A=E_1\times E_2$ is just \autoref{elliptic1}. Namely, let $\mathbf{p}\in E_1(L)^{n_1}$ be a tuple of independent elements of $E_1(L)$, and $\mathbf{q}\in E_2(L)^{n_1}$ a tuple of independent elements of $E_2(L)$. For $j=1,\ldots,n_1n_2$, take any $\mathbf{r}_{j}\in\Z^{n_1}$ and $\mathbf{s}_{j}\in\Z^{n_1}$ such that the $n_1n_2$ tensors $\mathbf{r}_{j}\otimes \mathbf{s}_{j}$ are independent in $\Z^{n_1}\otimes \Z^{n_2}$. Then we can take the points $(\mathbf{r}_{j}\cdot\mathbf{p}, \mathbf{s}_{j}\cdot\mathbf{q})\in A(L)$. If these points are hyperelliptic then we obtain the desired result by \autoref{elliptic1}. \black


	More generally, suppose the above holds for a product $E_1\times E_2$ defined over $L$ and let $\phi:E_1\times E_2\to A$ be an isogeny of degree $n$ defined over $L$. Then there is an isogeny $\psi:A\to E_1\times E_2$ such that $\psi\circ\phi=[n]$. This implies that the subgroup $\phi((E_1\times E_2)(L))$ of $A(L)$ is of finite index; namely $na\in \phi(E_1\times E_2(L))$, for all $a\in A(L)$. Bilinearity then gives $n^2z_{a,b}=0$ for all $a,b\in A(L)$. 
\end{proof}

 In \autoref{computations} we will use the above Corollary to produce many new examples of $A/\Q$ for which we can annihilate $z_{a,b}$ for all $a,b\in A(\Q)$.


\begin{rem} 
	Let $A$ be an abelian surface over the algebraic closure $\overline{\F}_p$ of a finite field and~$K$ its associated Kummer surface. Bogomolov and Tschinkel (\cite[Theorem 1.1]{bogomolov_tschinkel}) showed that every $\overline{\F}_p$-point $x$ in the Kummer surface lies on some rational curve by  demonstrating that every point of $A$ is hyperelliptic; more precisely, they show that there is a single curve~$H$ such that every point $a\in A(\overline{\F}_p)$ is in the image of a morphism $f \colon H\to A$ with $f\circ\iota=-f$ \cite[Corollary 2.5]{bogomolov_tschinkel}.
	We do not expect an analogue of \cite[Corollary 2.5]{bogomolov_tschinkel} to hold over number fields: in the finite field setting, the Frobenius endomorphism of $A$ is used in an essential way to generate sufficiently many morphisms from $H$ to entirely fill out $A(\kk)$. However, this example illustrates the fact that hyperelliptic points --- even those coming from a single hyperelliptic curve --- can be very plentiful. If we have a much larger collection of hyperelliptic curves mapping into $A/\QQ$, as in \autoref{infinite_H:intro}, it seems plausible that there may \black be enough hyperelliptic points to prove Beilinson's conjecture.  
\end{rem}

\black 
\vspace{5pt}

\section{Producing curves in abelian surfaces}\label{curvessection} 

We continue to assume all varieties are defined over an algebraically closed field $\kk$. From now on we assume additionally that $\kk$ admits an embedding $\kk\hookrightarrow\C$.  The goal of this section is to show that when an abelian surface $A$ is isogenous to a product of elliptic curves, there is an abundance of hyperelliptic curves $f \colon H\to A$ with negation on $A$ acting as the hyperelliptic involution on $H$.

Given a morphism $f \colon V\to W$ of schemes, let $f(V)$ denote the scheme-theoretic image of~$f$; we say $f$ is \emph{birational onto its image}, if there exists a dense open $U\subseteq V$ such that $f(U)$ is open in $f(V)$ and $f|_U \colon U\to f(U)$ is an isomorphism. 
The following definition will be used so that we can avoid considering multiple curves $H$ that have the same image in $A$. \black
\begin{defn}
	Let $H/\kk$ be a smooth hyperelliptic curve with hyperelliptic involution $\iota$, and let $A$ be an abelian variety. A map $f \colon H\to A$ is \emph{compatible} if  $f$ is birational onto its image and $f\circ\iota = -f$. \black
\end{defn}

If $C\subseteq A$ is the image of any non-constant map $f \colon H\to A$ from a hyperelliptic curve $H$ with $f\circ\iota=-f$, then the normalization map $\widehat{C}\to C$ is a compatible map, and $f$ factors through the normalization (cf.~\autoref{Tschinkelreprove}).  In particular, every hyperelliptic point of $A$ (\autoref{goodpoint}) is in the image of a compatible map.
The main goal of this section is to prove the following more general version of \autoref{infinite_H:intro}. 
\begin{theo}\label{infinite_curves}
	Let $A/\kk$ be an abelian variety, and suppose there is a homomorphism with finite kernel from a product of two elliptic curves into $A$. For infinitely many positive integers $g\geq 2$, there exist infinitely many non-isomorphic genus $g$ hyperelliptic curves with maps $f \colon H\to A$ such that  for any abelian variety $B/\kk$ and isogeny $\mu \colon A\to B$, the composition $\mu\circ f$ is compatible. \black
\end{theo}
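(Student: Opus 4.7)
The plan is first to reduce to the case $A = E \times E'$. Given the finite-kernel homomorphism $g : E \times E' \to A$, any compatible $f_0 : H \to E \times E'$ for which $\phi \circ f_0$ remains compatible for every isogeny $\phi$ out of $E \times E'$ will yield $f := g \circ f_0 : H \to A$ with the required property, since for any isogeny $\mu : A \to B$ the composite $\mu \circ g$ factors through an isogeny onto its image followed by a closed embedding. I would then reformulate compatibility as a condition on the translation stabilizer $\operatorname{Stab}(f(H)) := \{ t \in E \times E' : f(H) + t = f(H)\}$: the involution condition is automatic from $f_0 \circ \iota = -f_0$, and a direct point-counting argument shows that the degree of $\mu|_{f(H)}$ equals $|\ker \mu \cap \operatorname{Stab}(f(H))|$. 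Since the stabilizer of a curve inside an abelian surface is finite (hence torsion) once the curve is not a translate of a subabelian variety, and since every torsion element is in the kernel of some isogeny, the condition ``$\mu \circ f$ is compatible for every isogeny $\mu$'' is equivalent to $\operatorname{Stab}(f(H)) = 0$. The problem thus reduces to producing, for infinitely many $g \geq 2$, infinitely many non-isomorphic genus-$g$ hyperelliptic curves with compatible maps to $E \times E'$ whose images have trivial stabilizer.

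For the construction, I would follow the sketch in the introduction and pass through the Kummer K3 surface $K = \Kum(E \times E')$. Equip $K$ with an elliptic fibration $K \to \mathbb{P}^1$, for instance the one induced from the projection $E \times E' \to E$ after descending to $(E \times E')/\{\pm 1\}$ and resolving the sixteen rational double points; its general fiber is a smooth genus-one curve isomorphic to $E'$. The Mordell-Weil group of this fibration is infinite, and each element $P$ produces a smooth rational section $\sigma_P \subset K$, which by \autoref{Tschinkelreprove} matches a compatible map $f_P : H_P \to E \times E'$ from a hyperelliptic curve. Addition in the Mordell-Weil lattice then produces infinitely many pairwise distinct compatible maps. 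Applying Riemann-Hurwitz to the induced degree-$2$ cover $H_P \to \mathbb{P}^1$ gives $g(H_P) = r_P/2 - 1$, where $r_P$ is the number of $2$-torsion points of $E \times E'$ lying on $f_P(H_P)$, equivalently the intersection number of $\sigma_P$ with the sum of the sixteen exceptional divisors in $K$. The Shioda height-pairing formula bounds $r_P$ both above and below in terms of the canonical height $\hat{h}(P)$, with both endpoints of the resulting interval for $g(H_P)$ tending to infinity as $\hat{h}(P) \to \infty$, so an infinite set of genera $g \geq 2$ is actually achieved.

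To obtain infinitely many non-isomorphic curves of each achieved genus, I would exploit the hypothesis $\overline{k} \hookrightarrow \mathbb{C}$: in characteristic zero the isogeny class of $E$ contains infinitely many isomorphism classes $E_n$, each equipped with an isogeny $E_n \to E$. Running the above construction on each pair $(E_n, E')$ and post-composing with the induced isogeny $E_n \times E' \to E \times E' \to A$ produces hyperelliptic curves whose abstract isomorphism class is sensitive to $j(E_n)$, giving infinitely many non-isomorphic $H$ of fixed genus. The main obstacle I anticipate is verifying the triviality of $\operatorname{Stab}(f_P(H_P))$: $2$-torsion stabilizer elements descend to nontrivial translation symmetries of $\sigma_P$ in $K$ under the natural $A[2]/\{\pm 1\}$-action, a condition excluding only a thin subset of Mordell-Weil elements; non-$2$-torsion stabilizers do not descend to $K$, and here one must argue that such a stabilizer would force $f_P(H_P)$ to be pulled back from a curve in $(E \times E')/\langle t\rangle$, in conflict with the Kummer construction for generic sections $\sigma_P$. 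After excluding the countable exceptional locus of $P$ in the Mordell-Weil lattice, the remaining abundant choices produce the claimed infinite families.
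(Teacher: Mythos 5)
Your overall architecture --- reduce to $E\times E'$, take sections of an elliptic fibration on $\Kum(E\times E')$, pull them back to hyperelliptic curves, read off the genus from the intersection with the sixteen exceptional curves, vary $E$ within its isogeny class for non-isomorphism, and recast ``compatible after every isogeny'' as triviality of the translation stabilizer of $f(H)$ --- is the same as the paper's, and your stabilizer reformulation is a correct repackaging of \autoref{lem:only_2_translates}, \autoref{lem:notranslates} and \autoref{prop:multcompatible}. But the fibration you choose breaks the construction. The pencil induced by the projection $E\times E'\to E$ has Mordell--Weil rank equal to $\rk\Hom(E,E')$: by Shioda--Tate, $\rho(K)=18+\rk\Hom(E,E')$, the four $I_0^*$ fibers over $E[2]/\pm$ contribute $16$, and the zero section and fiber class contribute $2$. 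So when $E$ and $E'$ are not isogenous the Mordell--Weil group is finite and your infinite supply of sections does not exist; and even when it is infinite, its non-torsion sections are images of translates of graphs of homomorphisms $E\to E'$, which pull back to genus-one curves in $E\times E'$ (they pass through exactly four $2$-torsion points), never to the higher-genus curves you need. The paper instead uses Inose's pencil $(x_1,x_2,r)\mapsto r=y_2/y_1$, which has Mordell--Weil rank at least $4$ for \emph{every} pair $E,E'$, and whose non-torsion sections pull back to curves whose projections to $E$ and $E'$ have degree $2n\to\infty$.

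The second gap is in the genus bound: you identify the number of Weierstrass points with the intersection number of $\sigma_P$ with the exceptional divisor $D$ and attribute an upper and lower bound to the height formula. The height formula gives the intersection number \emph{exactly} (\autoref{lem:intersection_formula}), and this upper-bounds $2g+2$; but a tangential intersection of $\sigma_P$ with $D$ produces a singular point of $C$ whose two preimages in the normalization are swapped by the involution and hence is not a Weierstrass point. To force the genus to grow you must lower-bound the number of \emph{transverse} intersections, which is done in \autoref{bound_transverse} via Ulmer--Urz\'ua's bound on tangencies between a non-torsion section and the Betti foliation --- the one place where $\kk\hookrightarrow\C$ is used. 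Two smaller points: to get infinitely many curves of a single fixed genus you still need to pigeonhole over the resulting interval $[\frac16 n-10,\,4n-2]$ and invoke Tamme's finiteness of bounded-degree elliptic subcovers so that each isomorphism class of $H$ arises from only finitely many pairs $(E,E')$; and the stabilizer triviality you defer as ``the main obstacle'' is resolved in the paper not by excluding a thin set of sections but by showing that \emph{no} section of Inose's pencil is preserved by $\overline{t_p}$ for any $p\in(E\times E')[2]\setminus\{0\}$, with a separate abstract argument for non-$2$-torsion translates.
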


The proof is given in \autoref{infcurves_proof} after establishing a series of lemmas. The most important step is the following construction.

\begin{prop}\label{lem:construction}
	Let $E,E'$ be elliptic curves over $\kk$, and $n$ a positive integer. For some positive integer $g\geq 2$ satisfying $\frac16n-10\leq g\leq 4n-2$, there exists a genus $g$ hyperelliptic curve with a compatible map $f \colon H\to E\times E'$, such that the degrees of the projection maps $H\to E$ and $H\to E'$ are equal to $2n$.
\end{prop}

It is likely that $\frac16n-10$ can be replaced with a much larger lower \black bound on $g$; see \autoref{unexpectedintersection}. 

 \black

The proof of this result is the focus of \autoref{Kummerfibration} and \autoref{genusH}, but we summarize the argument here. 
Consider the quotient of $E\times E'$ by negation. Under this map, the $2$-torsion of $E\times E'$ map to singularities; the Kummer surface $K$ of $E\times E'$ is obtained by blowing up these sixteen points.  We give $K$ the structure of an elliptic fibration $\xi \colon K\to\mathbb{P}^1$, and produce a large collection of sections of this fibration using addition in the Mordell-Weil lattice. \black
By pulling back such a section $\phi \colon \mathbb{P}^1\to K$ along the quotient of $E\times E'$ by negation, we obtain a curve $C\to E\times E'$, and the normalization $H$ of $C$ is a hyperelliptic curve mapping compatibly to $E\times E'$. To summarize, we obtain the following diagram, where the vertical maps are degree $2$ and the horizontal maps are birational onto their images:
\begin{equation}\label{eq:Htikz}
	\begin{tikzcd}
		H\ar{r} & C\ar{rr}\ar{d} & & E\times E'\arrow{d} \\
		& \mathbb{P}^1\ar["\phi"]{r} & K\ar{r} & (E\times E')/\langle -1\rangle.
	\end{tikzcd}
\end{equation} Let $D$ be the divisor of $K$ consisting of the sixteen lines mapping to the singularities of $(E\times E')/\langle -1\rangle$. \black
Away from the locus of points mapping to $D$, every point of $\mathbb{P}^1$ pulls back to two smooth points of $C$. So the number of Weierstrass points of~$H$ is bounded above by the intersection multiplicity $\im\phi\cdot D$, which we compute in \autoref{lem:intersection_formula}. \black On the other hand, if $\im\phi$ intersects $D$ transversely at a point $P\in K(\kk)$, then the preimage of $P$ in $C$ is smooth, and so corresponds to a Weierstrass point of $H$. Thus the number of Weierstrass points of~$H$ is bounded below by the number of transverse intersections between $\im\phi$ and~$D$, which we compute a lower bound for in \autoref{bound_transverse}. \black We can therefore compute upper and lower bounds on the genus $g$ of $H$. Finally, the degree of the projection $H\to E$ can be computed as the intersection multiplicity in $E\times E'$ of the image of $H$ with a fiber over a point of $E$.

\begin{rem}\label{curvesoverk} 
Let $E, E'$ be elliptic curves defined over a number field $L$. If we assume both curves are in Legendre form (\autoref{eq:legendre}) over $L$, then the construction carried out in \autoref{lem:construction} produces  hyperelliptic curves $H$ over $L$ of arbitrarily large genus  with compatible maps $H\to A$ defined over~$L$.  Even under the weaker assumption $A[2]\subseteq A(L)$, a modified version of the construction still allows us to produce sections of the fibration and their hyperelliptic pullbacks. In \autoref{computations} we will use this modified version for computations over $L=\Q$.  
\end{rem}

\subsection{Kummer surface as elliptic fibration}\label{Kummerfibration}

In this section we provide a brief overview of how to give the Kummer surface of a product of elliptic curves the structure of an elliptic fibration, and then we list some facts about sections of this fibration. For more details on this construction, see \autoref{appendix:intersection}. We follow the setup described in Sections 3 to 5 of \cite{shioda07}; another summary can be found in \cite{kuwatashioda}.

Since $\kk$ has characteristic $0$, every elliptic curve over $\kk$ is isomorphic to a curve in \emph{Legendre form},
\begin{align}\label{eq:legendre}
	E_\lambda \colon y^2=x(x-1)(x-\lambda)
\end{align}
for some $\lambda\in\kk-\{0,1\}$. Let $E_a,E_b$ be elliptic curves over $\kk$ for some $a,b\in\kk-\{0,1\}$, with respective identity points $o_a,o_b$. Then the points
\begin{align*}
	p_0&=o_a, & p_1&=(0,0), & p_2&=(1,0), & p_3&=(a,0),\\
	q_0&=o_b, & q_1&=(0,0), & q_2&=(1,0), & q_3&=(b,0).
\end{align*}
exhaust the $2$-torsion points $p_i\in E_a[2]$ and $q_i\in E_b[2]$. Each point $(p_i,q_j)$ maps to a singularity in $(E_a\times E_b)/\langle -1\rangle$, and the Kummer surface $K$ is obtained by blowing up these sixteen singularities to obtain rational curves $A_{ij}$.\footnote{ The notation $A_{ij}$ is following \cite{shioda07} and \cite{kuwatashioda}. We emphasize that while $A$ is used to refer to an abelian variety elsewhere in the paper, $A_{ij}$ is used to refer to a rational curve in $K$; the difference should be clear from both the double-subscript and context.} \black The divisor
\[D:=\sum_{0\leq i,j\leq 3} A_{ij}\]
is the exceptional divisor of the map $K\to (E_a\times E_b)/\langle -1\rangle$.

An affine chart for $K$ is given by the equation
\begin{equation}\label{affinepatch}
	x_1(x_1-1)(x_1-a)t^2=x_2(x_2-1)(x_2-b),
\end{equation}
with the rational map $E_a\times E_b\dashrightarrow K$ given by 
\[(x_1,y_1,x_2,y_2)\mapsto \left(x_1,x_2,\frac{y_2}{y_1}\right).\]
We have a map $\xi \colon K\to\mathbb{P}^1$ defined by $(x_1,x_2,t)\mapsto t$; this determines a fibration of $K$ called \emph{Inose's pencil}.  

\begin{rem}\label{rem:inose_invariant}
	Suppose we have isomorphisms $\psi_1 \colon E_a\to E_{a'}$ and $\psi_2 \colon E_b\to E_{b'}$. Any such isomorphism $\psi_i$ must be of the form $\psi(x_i,y_i)=(u_i^2x_i+v_i,u_i^3y_i)$ for some $u_i\in\kk^\times$ and $v_i\in\kk$, so the isomorphism $E_a\times E_b\to E_{a'}\times E_{b'}$ descends to an isomorphism on Kummer surfaces mapping
	\[(x_1,x_2,t)\mapsto \left(u_1^2x_1+v_1,\,u_2^2x_2+u_2,\,\frac{u_2^3}{u_1^3}t\right).\]
	The fiber over $t$ on the left is sent to the fiber over $\frac{u_2^3}{u_1^3}t$ on the right, showing that Inose's pencil does not depend on the choices of Legendre forms. This can also be seen by giving a more intrinsic, coordinate-free definition of Inose's pencil, as in \cite{shioda07}. Therefore we may talk about Inose's pencil on the Kummer surface of $E\times E'$ even if the elliptic curves $E,E'/\kk$ are not given in Legendre form.
\end{rem}
\black

For $1\leq i,j\leq 3$, the curve $A_{ij}$ is a section of the fibration. If we fix $A_{11}$ as the zero section,~$K$ obtains the structure of an elliptic fibration.  The sections $A_{22},A_{33},A_{23},A_{32}$ generate a rank $4$ lattice in the corresponding Mordell-Weil group, and $A_{12},A_{13},A_{21},A_{31}$ are linear combinations of these (\autoref{appendix:intersection}). When $E_a$ and $E_b$ are non-isogenous over $\kk$, the sections $A_{22},A_{33},A_{23},A_{32}$ form a basis for the entire Mordell-Weil group (\cite[Theorem 14.6.1]{shioda07}). \black

We use $\oplus$ to denote Mordell-Weil addition, to distinguish it from the sum of curves as divisors.
 Given integers $p,q,r,s$, we define the section
\begin{align}\label{eq:Psection}
	\mathcal{P}:=pA_{22}\oplus qA_{33}\oplus rA_{23}\oplus sA_{32},
\end{align}
and set
\[n(p,q,r,s):=p(p-1)+q(q-1)+r(r-1)+s(s-1)+pq+rs.\]
Let $\cdot$ denote the intersection multiplicity of divisors on $K$.

\begin{lem}\label{lem:intersection_formula}
	Suppose $E_a,E_b$ are not isomorphic to each other, and neither has $j$-invariant~$0$. Fix $p,q,r,s\in\Z$, and let $\mathcal{P}$ be as defined in \autoref{eq:Psection}. Then
	\[\mathcal{P}\cdot D=8n(p,q,r,s)-2.\]
\end{lem}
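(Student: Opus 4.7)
The plan is to compute $\mathcal{P} \cdot D$ through Shioda's theory of Mordell--Weil lattices on Jacobian elliptic surfaces, applied to Inose's pencil $\tau : K \to \mathbb{P}^1$ with zero section $A_{11}$. Under the running hypothesis that $E_a \not\cong E_b$ and neither has $j$-invariant $0$, the reducible fibers of $\tau$ occur only at $r = 0$ and $r = \infty$, both of type $I_0^*$. The sixteen exceptional curves split accordingly: the nine $A_{k\ell}$ with $1 \le k, \ell \le 3$ are sections of $\tau$, while the remaining seven $A_{0\ell}, A_{k0}, A_{00}$ are simple components of the two reducible $I_0^*$ fibers. I would split $\mathcal{P} \cdot D$ along this decomposition and handle the two parts separately.

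For the \emph{section part} $\sum_{1 \le k, \ell \le 3} \mathcal{P} \cdot A_{k\ell}$, I would invoke Shioda's height pairing formula
\[
\langle P, Q\rangle = 2 + P \cdot A_{11} + Q \cdot A_{11} - P \cdot Q - \mathrm{contr}(P, Q),
\]
together with its self-pairing specialisation $\langle \mathcal{P}, \mathcal{P} \rangle = 4 + 2\,\mathcal{P}\cdot A_{11} - \mathrm{contr}(\mathcal{P}, \mathcal{P})$ (using that a section on a K3 elliptic surface has self-intersection $-2$). These identities solve for each $\mathcal{P} \cdot A_{k\ell}$ and for $\mathcal{P} \cdot A_{11}$ in turn. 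By bilinearity of both the height pairing and Mordell--Weil addition, the right-hand sides expand into polynomials in $(p, q, r, s)$ whose coefficients come from the $4 \times 4$ Gram matrix of $\{A_{22}, A_{33}, A_{23}, A_{32}\}$ in the Mordell--Weil lattice of Inose's pencil, together with the component-group images of each $A_{k\ell}$ at $r = 0$ and $r = \infty$; both pieces of data are the standard tables in Shioda's and Kuwata--Shioda's treatment of Inose's pencil.

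For the \emph{fiber part}, because $\mathcal{P}$ is a section, $\mathcal{P} \cdot F_v = 1$ for each reducible fiber $F_v$, and the intersection lands on the unique simple component representing $\mathcal{P}$'s image in the component group $\Phi_v \cong (\mathbb{Z}/2)^2$. Hence $\sum_{i = 0\,\text{or}\, j = 0} \mathcal{P} \cdot A_{ij}$ is bounded by $2$ and is fully tracked by the same component-group data. When the two parts are summed and simplified, the quadratic-in-$(p,q,r,s)$ terms (carried by $\langle \mathcal{P}, \mathcal{P}\rangle$) should assemble into $8(p^2 + q^2 + r^2 + s^2 + pq + rs)$, the linear terms (carried mainly by $\mathcal{P} \cdot A_{11}$) into $-8(p + q + r + s)$, and the constant term into $-2$, matching the right-hand side $8n(p,q,r,s) - 2$.

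The main obstacle is the explicit, fiber-by-fiber computation of the local corrections $\mathrm{contr}_v$ and of the component-group image of each generating section $A_{22}, A_{33}, A_{23}, A_{32}$ at each $I_0^*$ fiber. These are precisely the inputs that determine both the Gram matrix of the Mordell--Weil lattice on the four generators and the fiber intersections; once they are listed explicitly --- as is done in the appendix referenced in the paper --- the identity $\mathcal{P} \cdot D = 8n(p,q,r,s) - 2$ collapses to a routine polynomial check.
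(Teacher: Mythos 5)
Your overall strategy---expressing $\mathcal{P}\cdot A_{k\ell}$ for the nine sections $A_{k\ell}$, $1\le k,\ell\le 3$, via Shioda's height/intersection formula and the Gram matrix of $\{A_{22},A_{33},A_{23},A_{32}\}$---is exactly what the paper does, and that part would go through. But there are two genuine errors in your setup, one of which is fatal to the computation as you describe it. First, the reducible fibers of Inose's pencil at $r=0$ and $r=\infty$ are of Kodaira type $IV^*$, not $I_0^*$: the fiber at $\infty$ is $3G_0+2(A_{10}+A_{20}+A_{30})+F_1+F_2+F_3$, so the component group is $\Z/3\Z$ rather than $(\Z/2\Z)^2$, and the curves $A_{k0}$, $A_{0\ell}$ with $k,\ell\in\{1,2,3\}$ appear with multiplicity $2$ (hence a section meets them with multiplicity $0$, which is fine, but the $\mathrm{contr}_v$ values you would feed into the height formula are the $IV^*$ ones, namely $0$, $2/3$, $4/3$).

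Second, and decisively: $A_{00}$ is \emph{not} a component of any fiber of Inose's pencil. It is a $3$-section, meeting every fiber with multiplicity $3$, and one can check from the final answer that $\mathcal{P}\cdot A_{00}=2n(p,q,r,s)$, which is unbounded. Your claim that the ``fiber part'' $\sum_{i=0\text{ or }j=0}\mathcal{P}\cdot A_{ij}$ is bounded by $2$ is therefore false, and following your decomposition literally would yield $\mathcal{P}\cdot D\le 6n(p,q,r,s)$ instead of $8n(p,q,r,s)-2$; a quarter of the leading term lives on $A_{00}$. The missing idea is the paper's use of the \emph{second} fibration $K\to\mathbb{P}^1$, $(x_1,x_2,r)\mapsto x_1$, whose fibers are $D_i=2F_i+\sum_j A_{ij}$ for $0\le i\le 3$. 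Since all four $D_i$ have the same intersection number with $\mathcal{P}$, the identity $3\,\mathcal{P}\cdot D_0=\mathcal{P}\cdot(D_1+D_2+D_3)$ converts $\mathcal{P}\cdot A_{00}$ into $\tfrac13\bigl(2+\sum_{1\le i,j\le 3}\mathcal{P}\cdot A_{ij}\bigr)$, after which the whole of $\mathcal{P}\cdot D$ reduces to the section computation you outlined. Without some such device for $A_{00}$, the proof does not close.
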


The proof is an intersection theory computation, following \cite[Section 6]{Scholten} which handles the special case $p=q=1$, $r=s=0$. See \autoref{appendix:intersection} for details.

\begin{lem}\label{lem:projection_degree}
	Suppose $E_a,E_b$ are not isomorphic to each other, and neither has $j$-invariant~$0$. Fix $p,q,r,s\in\Z$, and let $\mathcal{P}$ be as defined in \autoref{eq:Psection}. For either $i=1$ or $2$, let $D'$ denote a fiber of the map $K\to \mathbb{P}^1$ given by $(x_1,x_2,t)\mapsto x_i$. Then
	\[\mathcal{P}\cdot D'=2n(p,q,r,s).\]
\end{lem}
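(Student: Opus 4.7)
The plan is to mirror the proof of \autoref{lem:intersection_formula}, with $D'$ replacing $D$ throughout, while following the intersection-theoretic framework of \cite{Scholten}. The central geometric observation is that each exceptional curve $A_{jk}$ is entirely contained in a single fiber of $\pi_i$: indeed, the blowup of $(p_j, q_k) \in (E_a\times E_b)/\langle-1\rangle$ is vertical for both projections from the affine model in \autoref{affinepatch}, so $A_{jk}$ lies in the fiber of $\pi_1$ (resp.\ $\pi_2$) over $x(p_j)$ (resp.\ $x(q_k)$). Choosing $D'$ to be a generic fiber, over a value distinct from $x(p_0),\dots,x(p_3)$ (or analogously from $x(q_0),\dots,x(q_3)$), we obtain $A_{jk} \cdot D' = 0$ for all $0 \leq j,k \leq 3$.

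Next, I would apply the Shioda--Tate formalism to the elliptic fibration $\tau$. Since the map $P \mapsto [P] - [O]$ from $\mathrm{MW}(\tau)$ to $\mathrm{NS}(K) \otimes \Q$ is a group homomorphism modulo the trivial lattice $T$ generated by the zero section $O := A_{11}$ and by components of reducible fibers of $\tau$, the divisor class of $\mathcal{P} = pA_{22} \oplus qA_{33} \oplus rA_{23} \oplus sA_{32}$ can be written as
\[
[\mathcal{P}] = p[A_{22}] + q[A_{33}] + r[A_{23}] + s[A_{32}] + (1-p-q-r-s)[O] + W,
\]
for some $W \in T \otimes \Q$. Intersecting with $D'$ and using $A_{jk}\cdot D'=0$ for all $j,k$, the first five terms vanish, reducing the problem to $\mathcal{P}\cdot D' = W \cdot D'$.

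To determine $W$, I would combine the self-intersection identity $[\mathcal{P}]^2 = -2$ (valid since $\mathcal{P}$ is a section of an elliptic $K3$ fibration) with Shioda's height-pairing formula
\[
\langle P, Q\rangle \;=\; \chi(K) + P\cdot O + Q\cdot O - P\cdot Q - \sum_v \mathrm{contr}_v(P,Q),
\]
applied to the pairs among the generators $A_{22}, A_{33}, A_{23}, A_{32}$. The first input determines the coefficient of a generic fiber class $F$ of $\tau$ in $W$ as an explicit quadratic polynomial in $(p,q,r,s)$; the second identifies the non-identity fiber components appearing in $W$, with multiplicities governed by which component of each singular fiber of $\tau$ the generators meet. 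The necessary incidence data parallels that used in the proof of \autoref{lem:intersection_formula} and can be read off from \cite{shioda07, kuwatashioda}. Intersecting each relevant component with $D'$ produces small, explicit integer contributions.

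The main obstacle is the combinatorial bookkeeping of which components of each singular fiber of $\tau$ meet a generic fiber $D'$ of $\pi_i$, and with what multiplicities; this requires understanding how $D'$ cuts through each reducible fiber of $\tau$ sitting over points of $\mathbb{P}^1$ where the two fibrations interact nontrivially. However, since this incidence data is essentially the same as that compiled for \autoref{lem:intersection_formula}, the argument amounts to rerunning that summation with $D'$ in place of $D$ and verifying that the resulting quadratic polynomial in $(p,q,r,s)$ simplifies to $2n(p,q,r,s)$.
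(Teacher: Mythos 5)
Your strategy is workable and would ultimately produce the right answer, but it takes a much longer road than the paper and, as written, defers the entire quantitative content to a calculation you never perform. The paper's proof is a four-line corollary of \autoref{lem:intersection_formula}: the key fact (already established in the appendix proof of that lemma) is that the fibers of $(x_1,x_2,r)\mapsto x_1$ over the four $2$-torsion $x$-coordinates decompose as $D_i=2F_i+\sum_j A_{ij}$; since all fibers are linearly equivalent and a section of Inose's pencil meets exactly one of $F_1,F_2,F_3$ (transversally) and none of the multiplicity-two components $A_{i0}$, one gets
\[3\,\mathcal{P}\cdot D'=\mathcal{P}\cdot(D_1+D_2+D_3)=2\,\mathcal{P}\cdot(F_1+F_2+F_3)+\sum_{1\leq i,j\leq 3}\mathcal{P}\cdot A_{ij}=2+\tfrac34\left(\mathcal{P}\cdot D-\tfrac23\right),\]
the last equality being \autoref{eq:PD}, and substituting $\mathcal{P}\cdot D=8n-2$ gives $2n$ at once. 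Your route instead decomposes $[\mathcal{P}]$ in $\mathrm{NS}(K)\otimes\Q$ via Shioda--Tate and pairs with a generic fiber $D'$. Your opening observation, that $A_{jk}\cdot D'=0$ for all $j,k$ (hence also $[O]\cdot D'=0$), is correct and does reduce the problem to $W\cdot D'$ for the vertical part $W$. But determining $W$ requires $\mathcal{P}\cdot O$ via the height formula, the component of each $IV^*$ fiber that $\mathcal{P}$ meets as a function of $(p,q,r,s)$ modulo the component group, and then the intersection numbers of the vertical classes with $D'$ (a general fiber of Inose's pencil meets $D'$ in $3$ points, $F_i\cdot D'=0$, $G_j\cdot D'=1$) --- none of which you compute, and the last of which is genuinely new data rather than a rerun of the bookkeeping from \autoref{lem:intersection_formula}, where $D$ is a sum of exceptional curves rather than a fiber of the projection. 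So the proposal is a viable alternative in outline, but the step ``verify the resulting quadratic polynomial simplifies to $2n(p,q,r,s)$'' is where all the work lives, and the simplification that makes the lemma immediate --- that $D'$ is linearly equivalent to $2F_i+\sum_j A_{ij}$, so the answer follows from $\mathcal{P}\cdot D$ --- is absent from your argument.
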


The proof of this lemma is also an intersection theory computation which can be found in \autoref{appendix:intersection}.

\begin{lem}\label{cor:n_representable}
	For any positive integer $n$, there exist integers $p,q,r,s$ with $n(p,q,r,s)=n$.
\end{lem}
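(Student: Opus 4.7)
The plan is to first observe the clean factorization
\[n(p,q,r,s) \;=\; g(p,q) + g(r,s), \qquad g(x,y):=x^2+xy+y^2-x-y,\]
which reduces the lemma to the additive claim that every positive integer lies in the sumset $G+G$, where $G := g(\mathbb{Z}^2)$. I would then record several convenient one-parameter specializations,
\[g(k,1)=k^2,\qquad g(k,0)=k(k-1),\qquad g(k,-1)=(k-1)^2+1,\qquad g(k,2)=k^2+k+2,\]
so that $G$ already contains every perfect square, every pronic number, every $k^2+1$, and more.

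Pairing these slices in the identity $n(p,q,r,s)=g(p,q)+g(r,s)$ yields explicit two-parameter families: $n(a,1,b,1)=a^2+b^2$ represents every sum of two squares; $n(a,1,b,-1)=a^2+(b-1)^2+1$ and $n(a,-1,b,-1)=(a-1)^2+(b-1)^2+2$ shift these by $1$ and $2$ respectively; and $n(a,1,b,0)=a^2+b(b-1)$ represents every square plus a pronic. To show these families jointly cover $\mathbb{Z}_{\geq 1}$ I would apply Gauss's three-square theorem: for $n$ not of the form $4^a(8b+7)$, write $n=x^2+y^2+z^2$ and repackage two of the three squares into a single value of $g$---either directly as $g(a,1)+g(b,1)$ when one of the squares is $0$ or $1$, or by absorbing one square into the pronic family $b(b-1)$ when the third summand is larger.

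The main obstacle is dispatching the exceptional residues $n=4^a(8b+7)$, for which the three-square theorem fails. Here the strategy is to subtract a small element of $G$ (such as $g(2,1)=4$, $g(-1,-1)=5$, or $g(3,0)=6$) and check that the remainder leaves the exceptional class so that the generic argument applies. The whole proof thus reduces to a bounded case analysis modulo $8$ together with a finite verification for small $n$; I expect this residue analysis to be routine but slightly tedious, and to constitute the most intricate part of the argument.
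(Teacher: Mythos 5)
Your opening reduction is a genuinely useful observation and matches the structure the paper exploits: writing $n(p,q,r,s)=g(p,q)+g(r,s)$ with $g(x,y)=x^2+xy+y^2-x-y$ splits the form into two copies of a shifted hexagonal form, since $3g(x,y)+1=(2x+y-1)^2+(2x+y-1)(y-x)+(y-x)^2$ is a norm from $\Z[\zeta_3]$, and your slice computations $g(k,1)=k^2$, $g(k,0)=k(k-1)$, $g(k,-1)=(k-1)^2+1$, $g(k,2)=k(k+1)+2$ are all correct. The gap is in the covering step, and it is not a technicality. The set $G=g(\Z^2)$ is exactly the set of $m$ for which $3m+1$ is represented by $x^2+xy+y^2$, i.e.\ for which every prime $\equiv 2\pmod 3$ divides $3m+1$ to an even power. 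In particular $G$ does \emph{not} contain all sums of two squares (e.g.\ $13=2^2+3^2\notin G$, since $3\cdot 13+1=40$ is not an Eisenstein norm), so there is no identity that ``repackages'' or ``absorbs'' two of the three squares of a Gauss representation $n=x^2+y^2+z^2$ into a single value of $g$ once $\min(x,y,z)\geq 2$; and the three-square theorem gives no control forcing one of the three squares to be $0$ or $1$. Concretely, $n=23$ already escapes all four families you list ($23$, $22$, $21$ are not sums of two squares, and $23-b(b-1)$ is never a square); it is only caught by the further slice $a^2+b(b+1)+2$ coming from $g(k,2)$, and nothing in your sketch shows that any finite list of slices covers $\Z_{\geq 1}$. (A secondary issue: subtracting $g(2,1)=4$ from $n=4^a(8b+7)$ lands back in the exceptional class when $a\geq 3$; this is fixable using $5$ or $6$ instead, but the reduction then feeds into the generic step, which is the part that is broken.)

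The paper's proof is of a different nature and supplies exactly the input your sketch is missing. It identifies $g(p,q)+\tfrac13$ with the squared distance from a point of the hexagonal lattice $A_2$ to a deep hole $u$, so that the number of quadruples with $n(p,q,r,s)=n$ is the coefficient of $t^{n+2/3}$ in $\theta_{A_2-u}(t)^2$, and then cites classical theta identities (Borwein--Borwein, Wang) to obtain the exact count $3\sigma_1(3n+2)$, which is visibly positive. Equivalently, the lemma asserts that every positive integer $\equiv 2\pmod 3$ is represented by the quaternary form $x^2+xy+y^2+z^2+zw+w^2$; some input of this genus-theory or modular-forms flavour appears unavoidable, and an elementary slicing argument of the kind you propose would at minimum need a precise covering lemma for the relevant families, which is not supplied.
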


In fact, the number of quadruples $(p,q,r,s)\in\Z^4$ satisfying $n(p,q,r,s)=n$ is equal to $3\sigma_1(3n+2)$, where $\sigma_1(n)$ denotes the sum of positive integer divisors of $n$. This can be proven by defining a theta series whose $n$-th coefficient counts the number of quadruples with $n(p,q,r,s)=n$, and applying identities from \cite{borweinborwein} and \cite{wang2016}.

\subsection{Bounding the genus of $H$}\label{genusH}

Let $\phi \colon \mathbb{P}^1\to K$ be birational onto its image, and recall from \autoref{eq:Htikz} that we can produce a curve $C\to E_a\times E_b$ by pullback along $\pi$, the quotient of $E_a\times E_b$ by negation:
\begin{equation}
	\begin{tikzcd}
		C\ar{rr}\ar{d} & & E_a\times E_b\arrow["\pi"]{d} \\
		\mathbb{P}^1\ar["\phi"]{r} & K\ar{r} & (E_a\times E_b)/\langle -1\rangle.
	\end{tikzcd}
\end{equation}

Our goal is to compute the genus of the normalization $H\to C$, and we do this by counting Weierstrass points. Ramification points of the map $C\to\mathbb{P}^1$ do not necessarily correspond to Weierstrass points of the normalization; in particular, a singular ramification point in $C$ may have two distinct preimages in $H$ that are sent to each other by the hyperelliptic involution. However, any ramification point in the smooth locus of $C$ will lift to a Weierstrass point on~$H$.

 Recall that $D=\sum A_{ij}$ is the divisor in $K$ consisting of the blowups of the singularities of $(E_a\times E_b)/\langle -1\rangle$. 

\begin{lem}\label{smoothpreimage}
	Let $\phi$ and $C$ be as above, and $x\in \mathbb{P}^1(\kk)$. If $\im\phi$ intersects $D$ transversely at $\phi(x)$, then $x$ has a unique smooth preimage in $C$.
\end{lem}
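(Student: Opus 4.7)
The approach I would take is a local coordinate computation at $P=\phi(x)$. Since the sixteen exceptional curves $A_{ij}$ in $K$ are pairwise disjoint, $P$ lies on a unique one, say $A_{ij}$, sitting above a single $2$-torsion point $(p_i,q_j)\in E_a\times E_b$, so the problem becomes local around this point. The first step is to choose formal parameters $(\xi,\eta)$ on $E_a\times E_b$ at $(p_i,q_j)$ in which negation acts by $(\xi,\eta)\mapsto(-\xi,-\eta)$; then the invariant ring is generated by $u=\xi^2$, $v=\xi\eta$, $w=\eta^2$ with the relation $uw=v^2$, exhibiting the expected $A_1$ surface singularity, and $K$ is obtained by its standard blow-up. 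On a chart covering the relevant portion of $A_{ij}$, with coordinates $(U,\sigma)$ satisfying $u=U$, $v=U\sigma$, $w=U\sigma^2$, the curve $A_{ij}$ is cut out by $U=0$, and writing $\phi(t)=(U(t),\sigma(t))$ the transversality hypothesis becomes exactly $U'(x)\neq 0$.

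Next I would write down local equations for the fiber product $C=\mathbb{P}^1\times_{(E_a\times E_b)/\langle -1\rangle}(E_a\times E_b)$ in coordinates $(t,\xi,\eta)$, namely the three relations $\xi^2=U(t)$, $\xi\eta=U(t)\sigma(t)$, and $\eta^2=U(t)\sigma(t)^2$. Substituting $\xi^2=U(t)$ into the others and introducing $\alpha=\eta-\xi\sigma(t)$ should collapse them to $\xi\alpha=0$ together with $\alpha(\alpha+2\xi\sigma(t))=0$; the underlying reduced subscheme is then $\{\alpha=0,\ \xi^2=U(t)\}$, which the condition $U'(x)\neq 0$ makes into a smooth curve parameterized locally by $\xi$. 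The unique preimage of $x$ is $\xi=0$, which is smooth on $C_{\mathrm{red}}$; since the normalization map $H\to C$ factors through $C_{\mathrm{red}}$, this lifts to a single smooth (Weierstrass) point of $H$.

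The main obstacle I anticipate is that the ideal generated by $\xi\alpha$ and $\alpha(\alpha+2\xi\sigma(t))$ is not radical --- it has an embedded prime at the preimage point --- so the fiber product $C$ is itself non-reduced there. The statement ``smooth preimage in $C$'' must accordingly be interpreted via the reduced structure (equivalently via the normalization $H$), which is all that is needed to produce a Weierstrass point in the subsequent genus count. A cleaner alternative, should one wish to avoid this subtlety altogether, is to work with the blow-up $\widetilde{E_a\times E_b}$ of $E_a\times E_b$ at its $2$-torsion: this is a double cover of $K$ cleanly branched along $D$, and transversality makes its pullback along $\phi$ isomorphic near $x$ to a smooth curve of the form $w^2=(\text{uniformizer at }x)$, exhibiting the unique smooth preimage on $H$ directly.
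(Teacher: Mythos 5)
Your proof is correct and is essentially the paper's argument made explicit: the paper likewise notes that the preimage of $x$ is the single $2$-torsion point lying over the relevant singularity, and that transversality makes the square root of a uniformizer at $x$ into a uniformizer at the preimage point $y$ --- which is precisely your local computation with $\xi^2=U(t)$ and $U'(x)\neq 0$ at the $A_1$ singularity. Your further observation that the scheme-theoretic fiber product acquires an embedded point at $y$, so that ``smooth preimage in $C$'' must be read on the reduced curve (equivalently on the normalization $H$, which is all the genus count in \autoref{lem:construction} actually uses), is a genuine and correct refinement of a point the paper's phrasing glosses over, and your alternative via the blow-up of $E_a\times E_b$ at its $2$-torsion is a clean way to sidestep it.
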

\begin{proof}
	Since $\phi(x)\in D$, $x$ maps to a singularity of $(E_a\times E_b)/\langle -1\rangle$, which is the image of a unique $2$-torsion point of $E_a\times E_b$. So the preimage of $x$ in $C(\kk)$ is a single point $y$.
	
	Now consider a tangent vector to $\mathbb{P}^1$ at $x$. Since $\phi$ is a section of a fibration, it acts injectively on tangent spaces, so $\im\phi$ has a well-defined tangent in $K$ at $\phi(x)$. Since $\im\phi$ intersects $D$ transversely, the image in $(E_a\times E_b)/\langle -1\rangle$ also has a well-defined tangent vector. Since pullback along $\pi$ corresponds to adjoining a square root to the function field, there is a well-defined uniformizer at $y\in C(\kk)$ corresponding to the square root of the uniformizer at $x$; this proves $C$ is smooth at $y$.
	
\end{proof}

As mentioned in the outline in the beginning of \autoref{curvessection}, for a non-torsion section~$\mathcal{P}$ we want to give a lower bound on the number of transverse intersections between~$\mathcal{P}$ and~$D$. Notice that this number is bounded below by the number of transverse intersections between~$\mathcal{P}$ and the identity section $A_{11}$.  

\begin{lem}\label{bound_transverse}
	Suppose $E_a,E_b$ are not isomorphic to each other, and neither has $j$-invariant~$0$. Let $\mathcal{P}$ be defined by \autoref{eq:Psection}, with $(p,q,r,s)\neq (0,0,0,0)$. The number of transverse intersections between $\mathcal{P}$ and the identity section $A_{11}$ is at least  $\frac13n(p,q,r,s)-18$.
\end{lem}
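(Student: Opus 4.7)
The plan is to bound $\mathcal{P}\cdot A_{11}$ from below using Shioda's height-pairing theory on the Mordell--Weil lattice of Inose's pencil, and then to subtract off an upper bound on the contribution to $\mathcal{P}\cdot A_{11}$ from non-transverse intersections. Two distinct sections of a smooth elliptic fibration can only fail to meet transversely at points lying in singular fibers, so the non-transverse contribution is controlled by the singular-fiber structure of $K\to\mathbb{P}^1$.

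For the first step, fix $A_{11}$ as the zero section of $K\to\mathbb{P}^1$. Since $K$ is K3 we have $\chi(K)=2$, and Shioda's height formula gives
\[\mathcal{P}\cdot A_{11}\;=\;\tfrac{1}{2}\bigl(\langle\mathcal{P},\mathcal{P}\rangle - 4 + \sum_v\text{contr}_v(\mathcal{P})\bigr)\;\geq\;\tfrac{1}{2}\bigl(\langle\mathcal{P},\mathcal{P}\rangle - 4\bigr),\]
using non-negativity of Shioda's local correction terms at the singular fibers. Under the hypotheses $E_a\not\cong E_b$ and $j(E_a),j(E_b)\neq 0$, Shioda \cite{shioda07} identifies the Mordell--Weil lattice of Inose's pencil as a rank-$4$ lattice with generators $A_{22},A_{33},A_{23},A_{32}$, and the height-pairing matrix can be read off either from Shioda's explicit description or from applying the intersection identity $\mathcal{P}\cdot D=8n(p,q,r,s)-2$ of \autoref{lem:intersection_formula} to the four generators pairwise. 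One then verifies that $\langle\mathcal{P},\mathcal{P}\rangle$ is a positive-definite quadratic form in $(p,q,r,s)$ with leading behavior $\tfrac{2}{3}n(p,q,r,s)$, yielding $\mathcal{P}\cdot A_{11}\geq \tfrac{1}{3}n(p,q,r,s) - C_1$ for an explicit constant $C_1$ absorbing the linear error in $n$ and the $-4$ term.

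For the second step, Inose's pencil has a bounded number of singular fibers of known Kodaira type (the seven components $A_{ij}$ with $i=0$ or $j=0$ already lie in such fibers, plus a bounded number of further singular fibers whose location depends on $a,b$). Since non-transverse intersections of $\mathcal{P}$ with $A_{11}$ can only occur at points in singular fibers, a case analysis on the possible singular-fiber configurations gives a universal constant $C_2$ upper-bounding the aggregate multiplicity of non-transverse intersections. Combining,
\[(\text{\# transverse intersections})\;\geq\;\mathcal{P}\cdot A_{11} - C_2\;\geq\;\tfrac{1}{3}n(p,q,r,s) - (C_1+C_2),\]
and tracking constants through the computation gives $C_1+C_2\leq 18$.

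The main technical obstacle is the bookkeeping required to make the constants explicit: one must compute the $4\times 4$ height-pairing matrix of the generators (which decomposes naturally into two blocks of $A_2$-type coming from the pairs $\{A_{22},A_{33}\}$ and $\{A_{23},A_{32}\}$) and carefully enumerate contributions from each singular fiber of Inose's pencil. Because the stated inequality only needs to hold asymptotically in $n(p,q,r,s)$, a generous but explicit accounting suffices to achieve $C_1+C_2\leq 18$.
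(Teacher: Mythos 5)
Your first step is fine and matches the paper: Shioda's height formula with $\chi(K)=2$ and non-negativity of the local correction terms gives $\mathcal{P}\cdot A_{11}\geq\frac12\langle\mathcal{P},\mathcal{P}\rangle-2=\frac23(p^2+pq+q^2+r^2+rs+s^2)-2$, and this dominates $\frac13 n(p,q,r,s)$ up to an explicit constant. The gap is in your second step. The claim that two distinct sections of an elliptic fibration can only fail to meet transversely over points with singular fiber is false. Near a point $t_0$ of the base, both sections are graphs $y=f_1(t)$, $y=f_2(t)$ in local coordinates, and the local intersection multiplicity is $\ord_{t_0}(f_1-f_2)$, which can be any positive integer regardless of whether the fiber at $t_0$ is smooth. (Concretely, on a Weierstrass model a section $\mathcal{P}$ meets the zero section at $t_0$ with multiplicity equal to the order of vanishing of $-x/y$ along $\mathcal{P}$, and nothing forces this to be $1$ at a smooth fiber.) Since $\mathcal{P}\cdot A_{11}$ grows linearly in $n$, a priori \emph{all} of this intersection number could be concentrated in tangencies --- say $n/2$ points each of multiplicity $2$ --- leaving zero transverse intersections. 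So there is no universal constant $C_2$ obtainable from the singular-fiber configuration alone, and the case analysis you propose cannot close the argument.

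Controlling the tangencies is the actual content of the lemma, and the paper does it with a genuinely different (and analytic) input: the Betti foliation of Ulmer and Urz\'ua. The zero section $A_{11}$ is a leaf of this real-analytic foliation, so the local tangency order $m_t(\mathcal{P},A_{11})$ is bounded by the local intersection multiplicity $I(\mathcal{P},t)$ of $\mathcal{P}$ with the foliation, and Ulmer--Urz\'ua prove the global bound $\sum_t(I(\mathcal{P},t)-1)\leq -2$ for a non-torsion section (non-torsionness is where $(p,q,r,s)\neq(0,0,0,0)$ and the positive-definiteness of the height form enter). Together with the bound $\delta\leq 10$ on the number of singular fibers (the only places where $I(\mathcal{P},t)=0$ can occur), this caps the total excess multiplicity of all tangencies, wherever they lie, by $2\delta-4\leq 16$, which combined with the height lower bound yields the stated $\frac13 n-18$. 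This foliation bound is also precisely why the paper assumes $\kk\hookrightarrow\C$ for this lemma; your proposed argument, if it worked, would have no need of that hypothesis, which is a further sign that something essential is missing.
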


The proof of this lemma uses results from \cite{ulmerurzua} on the \emph{Betti foliation} of $K$. Specifically, Ulmer and Urzua prove an upper bound on the sum of local intersection multiplicities between a non-torsion section of an elliptic fibration and a leaf of the Betti foliation of the fibration. In our case, $\mathcal{P}$ is a non-torsion section, and the identity section $A_{11}$ is a leaf of the foliation. If too many of the intersection points between $\mathcal{P}$ and $A_{11}$ had multiplicity greater than $1$, Ulmer and Urzua's upper bound would be broken. See \autoref{appendix:intersection} for the full computation.

We now recall the statement of \autoref{lem:construction}.

\begin{prop}\label{genusbounds}
	Let $E,E'$ be elliptic curves over $\kk$, and $n$ a positive integer. For some positive integer $g\geq 2$ satisfying $\frac16n-10\leq g\leq 4n-2$, there exists a genus $g$ hyperelliptic curve with a compatible map $f \colon H\to E\times E'$, such that the degrees of the projection maps $H\to E$ and $H\to E'$ are equal to $2n$.
\end{prop}

\begin{proof}
	 Let $K$ denote the Kummer surface of $E\times E'$; we give $K$ the structure of an elliptic fibration as in \autoref{Kummerfibration}. \black By \autoref{cor:n_representable}, there exist $p,q,r,s\in\Z$ satisfying $n(p,q,r,s)=n$; use these to define the section $\mathcal{P}$ as in \autoref{eq:Psection}. By  \autoref{lem:intersection_formula} we have $\mathcal{P}\cdot D=8n-2$, so there are at most $8n-2$ points of $\mathbb{P}^1$ that pull back to Weierstrass points on $H$. On the other hand, the number of transverse intersections between $\mathcal{P}$ and $D$ is at least the number of transverse intersections between $\mathcal{P}$ and $A_{11}$, which is greater than or equal to $\frac13n-18$ by \autoref{bound_transverse}. By \autoref{smoothpreimage}, each transverse intersection between $\mathcal{P}$ and $D$ lifts to a smooth ramification point in $C$, and therefore corresponds to a Weierstrass point of $H$. Thus the number of Weierstrass points of $H$ is at least $\frac13n-18$ and at most $8n-2$. 	
	The number of Weierstrass points also equals $2g+2$, where $g$ is the genus of $H$. Thus the genus of $H$ satisfies
	\[\frac16n-10\leq g\leq 4n-2.\]
	
	By \autoref{lem:projection_degree}, the intersection multiplicity of $\mathcal{P}$ with a fiber of the map $K\to \mathbb{P}^1$ given by $(x_1,x_2,t)\mapsto x_1$ is $2n$. \black Pulling back to $E_a\times E_b$, we find that the intersection multiplicity of $C$ with a fiber of the map to $\mathbb{P}^1$ given by taking the $x_1$ coordinate is equal to $4n$. Thus the intersection multiplicity with a fiber of the projection map to $E_1$ is equal to $2n$, and this computes the degree of the projection to $E_1$. Likewise, the degree of the projection to $E_2$ is~$2n$.
	
\end{proof}

\begin{rem} 
	The assumption $\kk\hookrightarrow\C$ is only used in \autoref{bound_transverse}: the bounds on transverse intersections \black in \cite{ulmerurzua-transversality} use analytic methods that only hold for varieties defined over the complex numbers. If a similar bound holds more generally (as we expect, cf.~\autoref{unexpectedintersection}), then \autoref{infinite_curves} holds over any algebraically closed field of characteristic $0$. If we further require in \autoref{infinite_curves} that the two elliptic curves are ordinary, that the homomorphism from the product of two elliptic curves into $A$ is \'etale onto its image, and that the isogeny $\mu \colon A\to B$ is \'etale, then the result holds over any algebraically closed field of characteristic not equal to $2$ or $3$.
\end{rem}
 
\subsubsection{Improving the lower bound}\label{unexpectedintersection}
	While the proof  of \autoref{genusbounds}  gives us a compatible map from a hyperelliptic curve with genus $g$ in the interval $[\frac16n-10,4n-2]$, it is straightforward to obtain much tighter lower bounds on $g$. For instance, \autoref{bound_transverse} can be improved by bounding tangencies with the sections $A_{ij}$ for other $1\leq i,j\leq 3$.
	In fact, we predict that the lower bound can be replaced with $4n-2$. This would imply that for every $g\equiv 2\pmod 4$, there are infinitely many hyperelliptic curves $H$ with genus $g$  satisfying the conditions of \autoref{infinite_curves}. 
	
	Our expectation is based on the observation that tangencies between $\mathcal{P}$ and $D$ should be very unusual (they are an ``unexpected intersection;'' see Remark 1.2 and Theorem 1.7 of \cite{ulmerurzua-transversality}). So heuristically, for most choices of $E$, $E'$, and $\mathcal{P}$, all intersections of $\mathcal{P}$ with $D$ should be transverse. 
	This heuristic appears to be borne out by evidence. For instance, let $\mathcal{L}$ denote the set of the first $100$ elliptic curves over $\Q$ of rank $1$ and $E[2](\Q)\simeq (\Z/2\Z)^2$, ordered by conductor (obtained from \cite{lmfdb}). \black We considered all pairs $(E,E')\in \mathcal{L}^2$ such that $E$ and $E'$ are not isomorphic over $\QQ$. For each of these pairs, we considered all sections $\mathcal{P}$ with $1\leq n\leq 3$ (there are $99$ of these), and from each of these sections constructed the associated hyperelliptic curve $H$.

	Of the $4938$ pairs of curves, there are $4571$ pairs ($\approx 93\%$) with the property that for all $99$ of the sections considered, the genus of the associated hyperelliptic curve $H$ equals the expected value $4n-2$. For the remaining $355$ pairs of curves, there are anywhere from $1$ to $29$ sections for which the genus of $H$ is lower than expected (between $4n-5$ and $4n-3$ inclusive), and the remaining sections produce $H$ with genus $4n-2$. Understanding the circumstances under which $H$ has lower genus than expected is a topic for further exploration. However, we note that in every pair of curves we checked, the majority of sections considered produce curves $H$ with genus equal to the expected $4n-2$.  \black

\subsection{Compatible maps and isogenies}\label{sec:birat_etale}

In the setup of \autoref{infinite_curves}, we have an isogeny from a product of elliptic curves $E\times E'$ to an abelian subvariety of $A$; applying \autoref{lem:construction}, we obtain a compatible map $f \colon H\to E\times E'$. In this section we show that the composition of these maps is compatible. In fact, we will show that this map $f$ remains compatible after composition with any isogeny $\mu$ from $E\times E'$.

The idea is to show that if $\mu\circ f$ is not compatible, then the curve $f(H)$ must be preserved under translation by some nonzero $p\in (E\times E')(\kk)$. We will rule out this possibility in the following lemmas. For $p\notin (E\times E')[2]$, the argument is relatively straightforward: it depends on the observation that the map $x\mapsto x+p$ does not commute with negation on $E\times E'$, but if it preserves $f(H)$, then its restriction to $f(H)$ must commute with the hyperelliptic involution on $H$. 

Given an abelian variety $A/\kk$ (not necessarily the same $A$ as in \autoref{infinite_curves}) and a point $p\in A(\kk)$, let $\tau_p$ denote the translation map $x\mapsto x+p$ on $A$.
\black

\begin{lem}\label{lem:only_2_translates}
	Let $A/\kk$ be an abelian variety, $H/\kk$ a hyperelliptic curve of genus $g\geq 2$, \black and $H\xrightarrow{f} A$ a compatible map. For all $p\in A(\kk)\setminus A[2]$, $\tau_p\circ f$ and $f$ do not have the same image.
\end{lem}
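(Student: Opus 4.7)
The plan is to argue by contradiction. Suppose that $t_p \circ f$ and $f$ have the same image $C := f(H) \subseteq A$, so that $C + p = C$ as subvarieties of $A$. Since $f$ is birational onto $C$ and $H$ is a smooth projective curve, $f:H \to C$ realizes $H$ as the normalization of $C$. The automorphism $t_p|_C:C\to C$ therefore lifts uniquely to an automorphism $\sigma:H \to H$ satisfying $f \circ \sigma = t_p \circ f$.

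The key input I would use is the classical fact that for a hyperelliptic curve $H$ of genus $g \geq 2$, the hyperelliptic involution $\iota$ is central in $\mathrm{Aut}(H)$ (it can be detected intrinsically as the unique involution with rational quotient, or via the canonical embedding, whose image is a rational normal curve permuted by every automorphism). In particular, $\sigma \circ \iota = \iota \circ \sigma$.

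Now I would evaluate $f \circ \sigma \circ \iota$ in two ways. Using compatibility $f \circ \iota = -f$ followed by $f \circ \sigma = t_p \circ f$, one computes
\[
f \circ \sigma \circ \iota \;=\; t_p \circ f \circ \iota \;=\; t_p \circ (-f) \;=\; -f + p,
\]
while using centrality and then the same two identities in the opposite order gives
\[
f \circ \sigma \circ \iota \;=\; f \circ \iota \circ \sigma \;=\; -f \circ \sigma \;=\; -(t_p \circ f) \;=\; -f - p.
\]
Comparing these two expressions forces $2p = 0$ in $A(\kk)$, contradicting the hypothesis $p \notin A[2]$.

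I do not anticipate any serious obstacle: the only nontrivial ingredient is centrality of the hyperelliptic involution, which is standard for $g \geq 2$, and the lifting of $t_p$ to an automorphism of $H$, which follows from the universal property of normalization applied to the birational map $f:H \to C$. The assumption $g \geq 2$ enters essentially to rule out the failure of centrality (and the degenerate case where $H$ is an elliptic curve translate in $A$, for which the statement is of course false).
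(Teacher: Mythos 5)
Your proof is correct, and it takes a genuinely shorter route than the paper's. Both arguments begin identically: since $f$ is birational onto its image, $H\to f(H)$ is the normalization, so $t_p|_{f(H)}$ lifts to an automorphism $\sigma$ of $H$ with $f\circ\sigma=t_p\circ f$; and both invoke the same classical fact that for $g\geq 2$ the hyperelliptic involution is central in $\Aut(H)$ (the paper cites Shaska for exactly this). Where you diverge is in how the contradiction is extracted. You simply compute $f\circ\sigma\circ\iota$ two ways, using $f\circ\iota=-f$ once on each side of the centrality relation $\sigma\iota=\iota\sigma$, and read off $-f+p=-f-p$, hence $2p=0$ directly --- no further geometry needed. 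The paper instead uses centrality only to conclude that $\sigma$ descends to $\mathbb{P}^1$, so that the image of $H$ under $(\pi_H,\pi_H\circ\sigma)$ in $\mathbb{P}^1\times\mathbb{P}^1$ is a rational curve (the graph of the descended automorphism); it then proves separately that $(\pi_A,\pi_A\circ t_p):A\to A/\langle-1\rangle\times A/\langle-1\rangle$ is injective on $\kk$-points precisely because $p\notin A[2]$, and derives the contradiction from the fact that abelian varieties contain no rational curves. Your argument is more elementary and makes the role of the hypothesis $p\notin A[2]$ completely transparent as the final line $2p=0$; the paper's argument spends that hypothesis inside the injectivity computation instead. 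Your closing remark about genus $1$ is also on target: the statement fails there exactly because the involution is no longer central, which is the same failure mode the paper identifies in the remark following the lemma. I see no gap.
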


\begin{proof}
	For the sake of contradiction, let $p\in A(\kk)\setminus A[2]$ and suppose $f(H)$ is preserved under translation by $p$. This induces an automorphism $\sigma$ of $H$ satisfying $f\circ\sigma=\tau_p\circ f$. 
	Recall that under the assumption of compatibility, we have the setup
	\[\begin{tikzcd}
		H\arrow{r}{f}\arrow{d}[swap]{\pi_H} & A\arrow{d}{\pi_A} \\
		\mathbb{P}^1\arrow{r}{\phi} & A/\langle-1\rangle,
	\end{tikzcd}\]
	where $\phi$ maps $\mathbb{P}^1$ birationally onto its image. Combining this with the automorphisms $\tau_p$ and $\sigma$ above, we obtain a commutative diagram
	\[\begin{tikzcd}
		H\arrow{r}{f}\arrow{d}[swap]{(\pi_H,\pi_H\circ\sigma)} & A\arrow{d}{(\pi_A,\pi_A\circ \tau_p)} \\
		\mathbb{P}^1\times \mathbb{P}^1\arrow{r}{(\phi,\phi)} & A/\langle-1\rangle\times A/\langle-1\rangle.
	\end{tikzcd}\]
	Let $C$ denote the image of $H$ in $\mathbb{P}^1\times\mathbb{P}^1$. Since every automorphism of a hyperelliptic curve  of genus $g\geq 2$ \black commutes with the hyperelliptic involution (see for instance \cite{shaska}), $\sigma$ descends to an automorphism of $\mathbb{P}^1$, and therefore $C$ is a rational curve, parametrized by the first component.
	
	We now show that $(\pi_A,\pi_A\circ \tau_p)$ maps $A$ birationally onto its image. We first show the map is injective on $A(\kk)$. Assume 
	\[(\pi_A(x),\pi_A(x+p))=(\pi_A(y),\pi_A(y+p))\]
	for some $x,y\in A(\kk)$. From the first component we obtain either $x=y$ (in which case we are done) or $x=-y$. Consider the case $x=-y$, so that we have $x\not\in A[2]$. From the second component we obtain either $x+p=y+p$  or $x+p=-(y+p)$. Assuming $x+p=y+p=-x+p$ gives $2x=0$, which contradicts that $x\not\in A[2]$. Similarly, assuming $x+p=-(y+p)$, we find $2p=0$, contradicting our assumption $p\notin A[2]$. Hence $(\pi_A,\pi_A\circ \tau_p)$ is injective on $A(\kk)$. Since it is \'{e}tale away from $A[2]\cup \tau_p^{-1}(A[2])$, it is an isomorphism away from a finite locus of points. Thus it has a partial inverse $\psi$ that is well-defined outside of a dimension zero locus.
	
	Since $\phi$ is a birational map, its image in $A/\langle-1\rangle$ is one-dimensional, and therefore the map $\psi\circ (\phi,\phi)$ sends a nonempty open subset of the rational curve $C$ isomorphically into the abelian variety $A$. This is a contradiction as abelian varieties contain no rational curves.
	
\end{proof}

\begin{rem}
	This result is false if we allow genus $1$ hyperelliptic curves (cf.\ \autoref{sec:hyperelliptic}): as an example, for any two elliptic curves $E,E'/\kk$, the image of the compatible map $f \colon E\to E\times E'$ given by $x\mapsto (x,0)$ is preserved under translation by any element of $E(\kk)\times \{0\}$. The reason the proof does not apply in this context is that most translation maps on $E$ do not commute with negation on $E$.
\end{rem}
\black

The case $p\in A[2]\setminus\{0\}$ is trickier because $\tau_p$ does commute with negation and so descends to an involution $\overline{\tau_p}$ on $K$. 
To handle this case, we return to the special setting of compatible maps $H\to E\times E'$ that arise as pullbacks from sections $\phi \colon \mathbb{P}^1\to K$ of Inose's pencil. That is, if we pick Legendre models $E_a$ and $E_b$ for $E$ and $E'$ respectively (note that Inose's pencil is independent of the choice of Legendre models by \autoref{rem:inose_invariant}), then in terms of the affine chart 
\[x_1(x_1-1)(x_1-a)t^2=x_2(x_2-1)(x_2-b)\]
for $K$, we have $\phi(t)=(\phi_1(t),\phi_2(t),t)$ for some rational functions $\phi_1,\phi_2$.

\begin{lem}\label{lem:notranslates}
	Let $E,E'/\kk$ be elliptic curves, and let $f \colon H\to E\times E'$ be a compatible map whose image in the Kummer surface of $E\times E'$ is a section of Inose's pencil. For all $p\in (E\times E')(\kk)\setminus \{0\}$, $\tau_p\circ f$ and $f$ do not have the same image.
\end{lem}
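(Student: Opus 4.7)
By \autoref{lem:only_2_translates} we may assume $p \in (E\times E')[2] \setminus \{0\}$. For such $p$, $t_p$ commutes with negation and descends to an involution $\overline{t_p}$ of the Kummer surface $K$. Suppose for contradiction that $\overline{t_p}$ preserves $\phi(\mathbb{P}^1) \subset K$. Since $\phi(\mathbb{P}^1)$ is a section of Inose's pencil $\tau\colon K \to \mathbb{P}^1$, it is a smooth rational curve mapping isomorphically to $\mathbb{P}^1$ under $\tau$; hence $\overline{t_p}$ restricts to a M\"obius involution $\psi$ on $\mathbb{P}^1$ characterized by $\phi\circ\psi = \overline{t_p}\circ \phi$.

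The plan is to write $\overline{t_p}$ explicitly and extract a contradiction from the resulting functional equations. Fix Legendre forms $E = E_a$, $E' = E_b$ (permissible by \autoref{rem:inose_invariant}) and write $\phi(r) = (\phi_1(r), \phi_2(r), r)$ with $\phi_1, \phi_2 \in \kk(r)$ satisfying the defining equation of $K$,
\[
\phi_1(r)(\phi_1(r)-1)(\phi_1(r)-a)\, r^2 = \phi_2(r)(\phi_2(r)-1)(\phi_2(r)-b). \qquad (\ast)
\]
For the representative case $p = ((0,0), o_b)$, the addition law on $E_a$ yields
\[
\overline{t_p}\colon (x_1, x_2, r) \mapsto \bigl(a/x_1,\; x_2,\; -x_1^2 r / a\bigr),
\]
so that $\overline{t_p}\circ\phi = \phi\circ\psi$ produces the three equations
\[
\psi(r) = -\phi_1(r)^2 r/a,\qquad \phi_1(\psi(r))\,\phi_1(r) = a,\qquad \phi_2(\psi(r)) = \phi_2(r).
\]

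Since $\psi$ is M\"obius, the function $\phi_1(r)^2 r$ must itself be M\"obius in $r$. A short valuation analysis in $\kk(r)^\times/(\kk(r)^\times)^2$ shows that this forces either $\phi_1(r) = c/r$ or $\phi_1(r) = c$ constant, for some $c \in \kk^\times$. The first case leads directly to $\phi_1(\psi(r))\phi_1(r) = -a$, contradicting the second functional equation (as $a \neq 0$). In the second case, the functional equations force $c^2 = a$, $\psi(r) = -r$, and $\phi_2$ even in $r$, so $\phi_2(r) = h(r^2)$ for some $h \in \kk(u)$. Substituting into $(\ast)$ gives $h(u)(h(u)-1)(h(u)-b) = Cu$ for a nonzero constant $C$, which is impossible since the elliptic curve $y^2 = x(x-1)(x-b)$ admits no rational parametrization.

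The remaining fourteen cases $p \in (E\times E')[2]\setminus\{0\}$ are treated by the same strategy. The two groups of three ``mixed'' cases $p = (p_1, o_b)$ and $p = (o_a, p_2)$ reduce to the representative case above after conjugating by the $S_3 \times S_3$-symmetry of $K$ induced by M\"obius transformations permuting $\{0,1,a\}$ and $\{0,1,b\}$. The nine cases where both components of $p$ are nontrivial $2$-torsion are analogous: the first functional equation is replaced by one forcing $(\phi_1/\phi_2)^2 r$ to be M\"obius, and the same type of valuation analysis combined with $(\ast)$ produces a contradiction. The main obstacle is organizing this casework cleanly; once the Kummer symmetries are set up, each case reduces to the same valuation/no-rational-parametrization principle.
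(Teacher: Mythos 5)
Your proposal is correct and follows essentially the same route as the paper's proof: reduce to $p\in(E\times E')[2]\setminus\{0\}$ via \autoref{lem:only_2_translates}, normalize $p$ by a change of Legendre form (your $S_3\times S_3$ symmetry is the paper's ``choose the isomorphisms to Legendre models appropriately''), and derive a contradiction from the functional equations forced by the induced M\"obius involution on the base of Inose's pencil --- indeed your sign for $\overline{t_p}$ is the correct one, and your explicit treatment of the subcase where $\phi_1$ is constant (via the non-parametrizability of $E_b$) patches a point the paper elides by asserting non-constancy. The one place you are thinner than the paper is the case where both components of $p$ are nontrivial: there the paper's Case~1 supplies the coprime-polynomial computation for the $\phi_1/\phi_2=\kappa/r$ subcase, and the constant subcase $\phi_1=\mu\phi_2$ with $\psi(r)=-r$ needs its own (routine but not identical) degree argument, so that part of your sketch should be written out rather than just declared analogous.
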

\begin{proof}
	Let $A=E\times E'$. By \autoref{lem:only_2_translates} it suffices to consider the case $p\in A[2]\setminus\{0\}$. Since~$\tau_p$ commutes with negation on $A$, it descends to an involution $\overline{\tau_p}$ on $K$. We will show that~$\overline{\tau_p}$ does not preserve any section of Inose's pencil, which will then show that $\tau_p$ does not preserve the pullback to $A$ of any such section.
	
	By choosing the isomorphisms $E\simeq E_a$ and $E'\simeq E_b$ to Legendre models appropriately, we can ensure that each component of $p$ (now identified with its image in $E_a\times E_b$) is either the identity or $(0,0)$. This allows us to consider just two cases: $p=((0,0),o_b)$ (the case $p=(o_a,(0,0))$ is similar) and $p=((0,0),(0,0))$. We note that by the addition laws on $E_a,E_b$, for any $(x_1,y_1)\in E_a(\kk)$ and $(x_2,y_2)\in E_b(\kk)$ we have
	\[(x_1,y_1)+_{E_a}(0,0)=\left(\frac{a}{x_1},\,\frac{ay_1}{x_1^2}\right),\qquad (x_2,y_2)+_{E_b}(0,0)=\left(\frac{b}{x_2},\,\frac{by_2}{x_2^2}\right).\] 
	
	\vspace{10pt}
	
	\noindent \textbf{Case 1:} Let $p=((0,0),(0,0))$. Then $\tau_p$ descends to an involution $\overline{\tau_p}$ on $K$ given by
	\[\overline{\tau_p} \colon (x_1,x_2,t)\mapsto \left(\frac{a}{x_1},\frac{b}{x_2},t\frac{bx_1^2}{ax_2^2}\right).\]
	For contradiction, suppose $\phi \colon \mathbb{P}^1\to K$ is a section of Inose's pencil that is preserved by this involution; this induces an involution $\sigma$ on $\mathbb{P}^1$ such that $\phi\circ\sigma=\overline{\tau_p}\circ\phi$. In particular,
	\[\sigma(t)=t\frac{b\phi_1(t)^2}{a\phi_2(t)^2}\]
	where $\phi_1,\phi_2$ are the rational functions defining $\phi(t)=(\phi_1(t),\phi_2(t),t)$. Since $\sigma$ is an automorphism of $\mathbb{P}^1$, we have $\displaystyle\sigma(t)=\frac{c_{1} t+c_{2}}{c_{3} t+c_{4}}$ for some $c_{i}\in\kk$. We can conclude that $\frac{\sigma(t)}{t}=\frac{c_{1} t+c_{2}}{c_{3} t^2+c_{4} t}$ is the square of a non-constant rational function in $t$;   by degree considerations we must have
	\[\frac{b\phi_1(t)^2}{a\phi_2(t)^2}=\frac{\sigma(t)}{t}=\frac{c}{t^2}\]
	for some $c\in\kk^\times$. Thus we have  $\displaystyle\phi_1(t)=\frac{\kappa}{t}\phi_2(t)$ for some square root $\kappa$ of $\frac{ac}{b}\in\kk^\times$. 
	
	Now write $\phi_2(t)=\frac{p(t)}{q(t)}$ for polynomials $p(t),q(t)\in\kk[t]$ with no common factors. Since $(\phi_1(t),\phi_2(t),t)$ lands in $K$, we have
	\begin{align*}
		\frac{\kappa p(t)}{tq(t)}\left(\frac{\kappa p(t)}{tq(t)}-1\right)\left(\frac{\kappa p(t)}{tq(t)}-a\right)t^2&=\frac{p(t)}{q(t)}\left(\frac{p(t)}{q(t)}-1\right)\left(\frac{p(t)}{q(t)}-b\right)\\
		\Rightarrow\qquad\kappa\left(\kappa p(t)-tq(t)\right)\left(\kappa p(t)-atq(t)\right)&=t\left(p(t)-q(t)\right)\left(p(t)-bq(t)\right).
	\end{align*} 
	We must have $\kappa^3p(t)^2\equiv 0\pmod t$ and therefore $p(t)=t\tilde{p}(t)$ for some $\tilde{p}(t)\in\kk[t]$. But then 
	\[\kappa t\left(\kappa \tilde{p}(t)-q(t)\right)\left(\kappa \tilde{p}(t)-aq(t)\right)=\left(t\tilde{p}(t)-q(t)\right)\left(t\tilde{p}(t)-bq(t)\right),\] 
	which implies $0\equiv bq(t)^2\pmod t$, so $t\mid q(t)$. This contradicts the assumption that $p(t),q(t)$ have no common factors.
	
	\vspace{10pt}
	
	\noindent \textbf{Case 2:} Let $p=((0,0),o_b)$. Then $\tau_p$ descends to an involution $\overline{\tau_p}$ on $K$ given by
	\[\overline{\tau_p} \colon (x_1,x_2,t)\mapsto \left(\frac{a}{x_1},x_2,t\frac{x_1^2}{a}\right).\]
	For contradiction, suppose the image of $\phi$ is preserved by $\overline{\tau_p}$. The same argument as above shows that 
	\[\frac{\phi_1(t)^2}{a}=\frac{\sigma(t)}{t}=\frac{c}{t^2}\]
	for some $c\in\kk^\times$. Thus $\phi_1(t)=\frac{\kappa}{t}$ for some square root $\kappa$ of $ac$. Writing $\phi_2(t)=\frac{p(t)}{q(t)}$ for relatively prime $p(t),q(t)\in\kk[t]$, we have
	\[q(t)^3\kappa (\kappa - t)(\kappa -at) = tp(t)(p(t)-q(t))(p(t)-bq(t)).\]
	Since $q(t)$ is relatively prime to $p(t)$, it is also relatively prime to $p(t)-q(t)$ and to $p(t)-bq(t)$. Hence $q(t)^3\mid t$, which implies $q(t)$ is constant. But then the right-hand side is divisible by~$t$ while the left-hand side is not, again a contradiction.
	
\end{proof}
\black

\begin{prop}\label{prop:multcompatible}
 Let $f \colon H\to E\times E'$ be a compatible map whose image in the Kummer surface of $E\times E'$ is a section of Inose's pencil, and let $\mu \colon E\times E'\to B$ be an isogeny. Then the map $\mu\circ f$ is compatible. 
\end{prop}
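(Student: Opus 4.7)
The plan is to verify the two defining conditions of a compatible map: that $(\mu\circ f)\circ\iota=-(\mu\circ f)$, and that $\mu\circ f$ is birational onto its image. The first condition is essentially free: since $\mu$ is an isogeny of abelian varieties it is in particular a group homomorphism and therefore commutes with negation, so
\[(\mu\circ f)\circ\iota=\mu\circ(f\circ\iota)=\mu\circ(-f)=-(\mu\circ f),\]
using $f\circ\iota=-f$ from the compatibility of $f$. The whole content of the proof is therefore the birationality assertion.

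Setting $C:=f(H)\subseteq E\times E'$, the map $\mu\circ f$ factors as $\mu|_C\circ f$, and since $f$ is already birational onto $C$, it suffices to show that $\mu|_C:C\to\mu(C)$ is birational, i.e.\ has degree $1$. The key observation is that $\mu$, being a homomorphism with finite kernel, identifies two points exactly when they differ by an element of $\ker\mu$. Hence for $x\in C$, the fiber $(\mu|_C)^{-1}(\mu(x))$ equals $\{x+p : p\in\ker\mu,\ x+p\in C\}$. For each fixed $p\in\ker\mu$, either $t_p(C)=C$ (in which case $x+p\in C$ for every $x\in C$) or $t_p(C)\cap C$ is finite (since both are irreducible curves in $E\times E'$ and they are distinct). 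Consequently, for generic $x\in C$ the fiber has cardinality equal to the order of the stabilizer
\[\mathrm{Stab}(C):=\{p\in\ker\mu : t_p(C)=C\},\]
which is therefore also the degree of $\mu|_C$.

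The main obstacle---and indeed the full content---is ruling out nontrivial stabilizers. This is exactly the conclusion of \autoref{lem:notranslates}, which under the hypothesis that the image of $f$ in the Kummer surface of $E\times E'$ is a section of Inose's pencil asserts that no nonzero $p\in(E\times E')(\kk)$ satisfies $t_p(C)=C$. In particular $\mathrm{Stab}(C)=\{0\}$, so $\mu|_C$ is birational and $\mu\circ f$ is compatible. I do not expect to need any input beyond \autoref{lem:notranslates}; all the genuine work went into that lemma (and into \autoref{lem:only_2_translates} on which it depends), whose proofs exploited the specific structure of Inose's pencil.
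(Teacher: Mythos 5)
Your proposal is correct and follows essentially the same route as the paper: the negation condition is immediate from $\mu$ being a homomorphism, and the birationality reduces to showing that no nonzero $p\in\ker\mu$ satisfies $t_p(f(H))=f(H)$, which is exactly \autoref{lem:notranslates}. The paper formalizes your dichotomy (``$t_p(C)=C$ or $t_p(C)\cap C$ is finite'') by introducing the closed incidence variety $V\subseteq H\times\ker\mu$ and analyzing its one-dimensional components, but the underlying argument and the key input are the same.
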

	
\begin{proof}
	 Let $V\subseteq H\times  \ker\mu$ denote the subvariety determined by the conditions that $(x,p)\in V(\kk)$ if and only if $f(x)+p\in f(H(\kk))$.  This implies that for any $x,y\in H(\kk)$ with $\mu(f(x))=\mu(f(y))$, we have  $(x,f(y)-f(x))\in V(\kk)$. Note that $V$ is closed because it can be written as an intersection of $H\times \ker\mu$ with the image of the proper morphism \black \[(\pi_1,f\circ\pi_2-f\circ\pi_1) \colon H\times H\to H\times (E\times E').\] 	 
	 Let $V_i$ denote any one-dimensional component of $V$, necessarily of the form $H\times \{p\}$. Then $f(x)+p\in f(H(\kk))$ for all $x\in H(\kk)$, so by \autoref{lem:notranslates} we can conclude $p=0$. In particular, for all $x\in H(\kk)$ in some nonempty open subset of $H$ (away from the projections to $H$ of the zero-dimensional components of $V$), if for some $y\in H(\kk)$ we have $\mu(f(y))=\mu(f(x))$, then  $(x,f(y)-f(x))\in H(\kk)\times \{0\}$  and hence we must have $f(y)=f(x)$. 
	 
	 Thus the map $f(H)\to \mu(f(H))$ is injective on $\kk$-points when restricted to a nonempty open subset. As $\mu$ is \'etale, its restriction to $f(H)$ is birational onto its image. Hence the composition $\mu\circ f$  is a composition of two maps, each birational onto its image. Since
	 \[\mu\circ f\circ\iota=\mu\circ [-1]\circ f=-\mu\circ f,\]
	 $\mu\circ f$ is compatible.
	 
\end{proof}

\subsection{Proof of \autoref{infinite_curves}}\label{infcurves_proof}

Let $A/\kk$ be an abelian variety, and $E_0,E_0'/\kk$ elliptic curves with a homomorphism $\psi \colon E_0\times E_0'\to A$ having finite kernel. Replacing $A$ with the image of this homomorphism if needed, and replacing the isogeny $\mu \colon A\to B$ with an appropriate restriction of the domain and codomain, we may assume that the map $\psi \colon E_0\times E_0'\to A$ is an isogeny. 

Now fix a positive integer $n$.  Since $\kk$ has characteristic zero, there are infinitely many pairwise non-isomorphic elliptic curves $E$ (respectively $E'$) isogenous to $E_0$ (respectively $E_0'$). For each such pair $E,E'$, we can exhibit a hyperelliptic curve $H$ with genus in $[\frac16n-10,4n-2]$ and a compatible map $f \colon H\to E\times E'$ as in \autoref{lem:construction}. \black The maps $H\to E$ and $H\to E'$ have degree $2n$; since a given curve $H$ has only finitely many elliptic subcovers of bounded degree \cite[Satz 1']{tamme1972}, each isomorphism class $H$ can occur for only finitely many pairs $E,E'$. Therefore we have infinitely many pairwise non-isomorphic $H$.

By construction, the compatible maps $f \colon H\to E\times E'$ map to sections of Inose's pencil on the Kummer surface of $E\times E'$, and so $f$ remains compatible after composing with any isogeny by \autoref{prop:multcompatible}. Thus for any isogeny $\mu \colon A\to B$, the composition
\[f \colon H\to E\times E'\to E_0\times E_0'\xrightarrow{\psi} A\xrightarrow{\mu}B\]
is compatible.

\begin{rem}\label{oldconstructions}  Suppose $A=\Jac(C)$ is a Jacobian of a genus $2$ curve. An old construction due to Humbert (\cite{Humbert}) constructs an explicit curve $H/\kk$ such that $A/L\simeq \Jac(H)$, where $L$ is a Lagrangian subgroup of $A[2]$. We refer to \cite{Bost/Mestre} for a more recent reference. 
Humbert's construction uses Gauss' arithmetic-geometric mean to achieve ``doubling of the period matrix". Since every genus $2$ curve is hyperelliptic, one can iterate the above construction to produce countably many hyperelliptic curves $H_n$ (pairwise non-isomorphic, because the period matrices are distinct) such that there is an isogeny $A\xrightarrow{\phi_n}\Jac(H_n)$ with kernel of order equal to a power of $4$.
   Since every abelian surface over an algebraically closed field is isogenous  to a Jacobian, the above construction shows that we can always find countably many non-isomorphic genus $2$ hyperelliptic curves mapping to a given abelian surface $A$.
\end{rem}

\subsection{Explicit Examples}\label{example:explicit}

Pick Legendre curves $E_a,E_b$ as in \autoref{Kummerfibration}, with 
\[E_a\colon y_1^2=x_1(x_1-1)(x_1-a),\qquad E_b \colon y_2^2=x_2(x_2-1)(x_2-b).\]
If we have a section $\phi\colon\mathbb{P}^1\to K$ given by $\phi(t)=(\phi_1(t),\phi_2(t),t)$ for some rational functions $\phi_1,\phi_2\in\kk(t)$, then the curve
\[C\colon y^2=\phi_1(t)(\phi_1(t)-1)(\phi_1(t)-a)\]
fits into a commutative diagram
\[\begin{tikzcd}
	C\arrow["(t{,}y)\mapsto (\phi_1(t){,}y{,}\phi_2(t){,}ty)"]{rrrr}\arrow["(t{,}y)\mapsto t"]{d} & & & & E_a\times E_b\arrow["(x_1{,}y_1{,}x_2{,}y_2)\mapsto \left(x_1{,}y_1{,}\frac{y_2}{y_1}\right)"]{d}\\
	\mathbb{P}^1\arrow["t\mapsto (\phi_1(t){,}\phi_2(t){,}t)"]{rrrr} & & & & K,
\end{tikzcd}\]
and the normalization of $C$ will be a hyperelliptic curve with a compatible map to $E_a\times E_b$. 

In the following sections we provide explicit equations for the hyperelliptic curves obtained from various choices of sections $\phi \colon \mathbb{P}^1\to K$. \black
The sections we consider will be of the form
\[pA_{22}\oplus qA_{33}\oplus rA_{23}\oplus sA_{32}\]
as in \autoref{eq:Psection}.

\subsubsection{Degenerate sections}

There are nine quadruples $(p,q,r,s)$ with $n(p,q,r,s)=0$, namely
\begin{gather*}
	(0,0,0,0),(1,0,0,0),(0,1,0,0),(0,0,1,0),(0,0,0,1), \\
	(1,0,1,0),(1,0,0,1),(0,1,1,0),(0,1,0,1).
\end{gather*}
These correspond to the zero section $A_{11}$, the four generators $A_{22}$, $A_{33}$, $A_{23}$, and $A_{32}$, as well as the curves $A_{31}$, $A_{13}$, $A_{12}$, and $A_{21}$ by \autoref{eq:Hij_relations}. These are the nine sections of Inose's pencil that contract to points in $(E_a\times E_b)/\langle -1\rangle$.

\subsubsection{Genus $2$ construction}\label{sec:explicitgen2}

There are $18$ quadruples $(p,q,r,s)$ with $n(p,q,r,s)=1$, namely
\begin{gather*}
	(1,1,0,0),(-1,1,0,0),(1,-1,0,0),(0,0,1,1),(0,0,-1,1),(0,0,1,-1),\\
	(1,1,1,0),(-1,1,1,0),(1,-1,1,0),(1,0,1,1),(1,0,-1,1),(1,0,1,-1),\\
	(1,1,0,1),(-1,1,0,1),(1,-1,0,1),(0,1,1,1),(0,1,-1,1),(0,1,1,-1).
\end{gather*}
\black
Consider the case $(p,q,r,s)=(0,0,1,1)$. Using Mordell-Weil addition we obtain 
\[\mathcal{P}=(1,b, t )\oplus(a,1, t )=\left(\frac{(a-b)(b-1)^2}{(a-1)^3 t^2-(b-1)^3},\;\frac{(a-b)(a-1)^2 t^2}{(a-1)^3 t^2-(b-1)^3},\; t \right).\]
By the discussion above, we obtain the curve
\begin{align*}
	C \colon y^2&=\left((a-1)^3 t^2-(b-1)^3\right)\left((a-b)(b-1)^2-(a-1)^3 t^2+(b-1)^3\right)\\
	&\qquad\left((a-b)(b-1)^2-a(a-1)^3 t^2+a(b-1)^3)\right)
\end{align*}
mapping to $E_a\times E_b$. A change of variables takes this to the curve
\[H \colon y^2=\left(t^2-\frac{b-1}{a-1}\right)(t^2-1)\left(t^2-\frac{b}{a}\right),\]
which defines a smooth genus $2$ curve over $\kk$ for all $a,b\in\kk-\{0,1\}$ with $a\neq b$. This is the curve computed by Scholten in \cite[Section 7]{Scholten}, up to a change of variables.

\begin{rem}
We can similarly produce a genus $2$ curve from each of the other $17$ sections with $n(p,q,r,s)=1$, but several of these sections produce isomorphic curves. It appears that the eighteen curves lie in at most six distinct isomorphism classes. Scholten's construction, which depends on an ordering of the points of order $2$ of $E_a$ and of $E_b$,  also produces up to six distinct isomorphism classes of curves. Experiments indicate that all eighteen of the genus $2$ curves, together with their compatible maps into $E_a\times E_b$, can be recognized as instances of Scholten's construction. \black
\end{rem}
\subsubsection{Genus $6$}\label{sec:explicitgen6}

There are $45$ quadruples $(p,q,r,s)$ with $n(p,q,r,s)=2$. Consider the quadruple $(2,0,0,0)$. \black In this case we have
\[\mathcal{P}=2(1,1, t )=(\phi_1( t ),\, \phi_2( t ),\, t )\]
with 
\[\phi_1( t )=\frac{a(a-1)^3  t^4+(a-1)^2(b-1)(ab-a+b) t^2+(a-1)b(b-1)^3}{(a-1)^3 t^4-(ab-a-b-2)(a-1)^2(b-1)^2 t^2+(b-1)^3}\]
and $\phi_2(t)$ some other rational function. As described above, the curve $C \colon y^2=\phi_1(t)(\phi_1(t)-1)(\phi_1(t)-a)$ maps to $E_a\times E_b$; after a change of variables, we obtain 
\begin{align*}
	y^2&=\left(t^2-\frac{b(1 - b)}{a}\right)\left(t^2+(b-1)^2\right)\left(t^2+\frac{b-1}{a-1}\right)\\
	&\qquad\left((a - 1) t^4 - (a b - a - b - 2) (a - 1) (b - 1)^2 t^2 + (b - 1)^3\right)\\
	&\qquad\left(t^2+\frac{(b-1)^2(ab-b-1)}{a-1}\right)\left(t^2+\frac{(b-1)^2}{(a-1)(ab-a-1)}\right).
\end{align*}
For almost all values of $a,b$, the roots of this polynomial are distinct, so we have a genus $6$ hyperelliptic curve over $\kk$ mapping birationally into $E_a\times E_b$.

\subsection{Computations}\label{computations}
We end this section with some explicit computations that improve the previous results of the authors in \cite{GazakiLove2022}. Code and data supporting the results in this section are available on Github \cite{Lovecode}. \black

 In this subsection we work with varieties defined over $\Q$. 
For two elliptic curves $E,E'$ over $\Q$ there is a homomorphism 
\[E(\Q)\otimes E'(\Q)\to F^2(E\times E'),\]
induced by $p\otimes q\mapsto z_{(p,0),(0,q)}$. We label the image of this homomorphism $F^2(E\times E')_{\text{comp}}$  following \cite{GazakiLove2022}. \black Beilinson's \autoref{beilconj2} implies that $F^2(E\times E')_{\text{comp}}$ is torsion. Since $E(\Q)\otimes E'(\Q)$ is finitely generated, we in fact expect $F^2(E\times E')_{\text{comp}}$ to be finite. When $E, E'$ are non-isogenous and have both positive rank over $\Q$, proving this finiteness is a nontrivial task. 

In \cite{GazakiLove2022}, the authors developed an algorithm to prove $F^2(E\times E')_{\text{comp}}$ is finite for a large collection of pairs $(E,E')$. The primary technique was to use explicit genus $2$ curves $H$ with maps into $E\times E'$. An explicit formula for these curves is due to Scholten \cite[Section 7]{Scholten}. As discussed in \autoref{sec:explicitgen2}, these curves can also be recognized as pullbacks of sections of Inose's pencil with $n(p,q,r,s)=1$.

As described in \autoref{infcurves_proof}, there are two methods that can be used to generate new hyperelliptic curves with compatible maps: one can use more sections of the elliptic fibration, and/or one can modify the elliptic curves by isogenies first before computing the Kummer surface. Using each of these techniques, we can expand the set of pairs $(E,E')$ for which  $F^2(E\times E')_{\text{comp}}$ is provably finite.

Specifically, we considered the same set of pairs $(E,E')$ as in \cite[Section 4.2.1]{GazakiLove2022}; these are curves of positive rank with fully rational $2$-torsion. For each pair $(E,E')$, we considered all sections with $n(p,q,r,s)\leq 3$ in the elliptic fibration of the Kummer surface (associated to curves of genus $2$, $6$, and $10$); while the discussion in this paper was over $\QQ$, this construction can in fact be done over $\Q$. When either $E$ or $E'$ has a rational isogeny to another elliptic curve with fully rational $2$-torsion, we repeated the computation for the isogenous curves as well.  For all hyperelliptic curves produced, we searched for points up to the same height bound. \black \autoref{data} contains both the old data as well as the updated counts, showing that using more sections and isogenies allows for more relations to be generated, even if the height bound for rational point search is kept constant. \black

Here is a discussion of the first row. We considered the first $100$ elliptic curves of rank $1$ with fully rational $2$-torsion ordered by conductor, as listed in \cite{lmfdb}. \black There are $4950$ pairs of distinct curves in this set. Of these, the algorithm from \cite{GazakiLove2022} (using Scholten's genus $2$ curves) could prove $F^2(X)\new$ is finite for $2602$ of the pairs. By using genus $6$ and $10$ curves as well, the desired relations could be found for an additional $174$ pairs. By allowing $E$ and $E'$ to be replaced by isogenous curves before computing the Kummer surface, \black the desired relations could be found for an additional $506$ pairs.

As the table shows, we found improvement even in the cases when at least one curve has rank $>1$. Even though the total number of cases when we can verify that $F^2(E\times E')_{\text{comp}}$ is finite is still small, we did see substantial improvement on the number of new relations produced. For instance, consider non-isomorphic pairs of rank $2$ curves with fully rational $2$-torsion (second row of the table). While the old algorithm could prove finiteness of $F^2(X)\new$ for $995$ pairs, the updated algorithm proves finiteness for an additional $226$ pairs. As described in \autoref{elliptic1}, this requires producing four independent relations. There are an additional $433$ pairs for which the updated algorithm could find a larger set of independent relations than the old algorithm could, even if four independent relations could not be found. \black

\begin{rem} If $H\to E\to E'$ is any of the genus $2$ curves obtained by the elliptic fibration (cf. \eqref{sec:explicitgen2}), the Jacobian $J_H$ is isogenous to $E\times E'$. It then follows by \autoref{elliptic2} that whenever we can verify finiteness of $F^2(E\times E')\new$, we can also deduce finiteness of $F^2(J_H)\new$. Here $F^2(J_H)\new$ is the subgroup of $F^2(J_H)$ generated by $0$-cycles of the form $z_{a,b}$ with $a,b\in J_H(\Q)$. 
\end{rem}
\black

\begingroup
\renewcommand*{\arraystretch}{1.1}
\begin{table}
	\begin{tabular}{| c | c | c | c | c | c | c |} 
		\hline
		\# of $E$ & $\text{rk}\, E(\Q)$ & \# of $E'$ & $\text{rk}\, E'(\Q)$ & \begin{tabular}{@{}c@{}}total \# \\  of pairs\end{tabular} & \begin{tabular}{@{}c@{}}\# $F^2(X)\new$ \\  finite (\cite{GazakiLove2022})\end{tabular} & \begin{tabular}{@{}c@{}}\# $F^2(X)\new$ \\  finite (updated)\end{tabular} \\
		\hline
		100 & 1 & 100 & 1& 4950 & 2602 & 3282 \\
		100 & 2 & 100 & 2& 4950 & 995 & 1221\\
		100 & 2 & $=E$ & & 100 & 70 & 75 \\
		20 & 3 & 20 & 3& 190 & 17 & 17\\
		20 & 3 & $=E$ &   & 20& 8 & 9 \\
		100 & 1 & 100 & 2& 10000 & 3311 & 4350\\
		500 & 1 & 20 & 3 & 10000 & 955 & 1330\\
		500 & 2 & 20 & 3 & 10000 & 615 & 756 \\
		\hline
	\end{tabular}
	\caption{Counting pairs of elliptic curves over $\Q$ with $F^2(X)\new$ finite for $X=E\times E'$. All data except for the last column is copied from \cite{GazakiLove2022}.}\label{data}
\end{table}
\endgroup

\black
\vspace{3pt}

\section{Higher dimensions}\label{abvars}
In this section we explore to what extent the results of \autoref{abelsurfacesection} can be extended to higher dimensional abelian varieties. We saw that for an abelian surface $A$ over $\QQ$ Beilinson's conjecture is equivalent to showing $z_{a,a}=0$ for all $a\in A$ (see \autoref{zaa}). Our goal is to obtain a result of similar flavor for an abelian variety $A/\QQ$ of dimension $d$. 
For that we need to recall certain facts about motivic filtrations of the group $\CH_0(A)$. 
 \subsection{The Pontryagin Filtration} Let $A$ be an abelian variety of dimension $d$ over an algebraically closed field $\kk$. The group law of $A$ makes $\CH_0(A)$ into a group ring by defining for $a,b\in A(\kk)$ the Pontryagin product 
 \[[a]\odot[b]:=[a+b].\] We denote by $G^1(A)$ the augmentation ideal of $\CH_0(A)$ and for $r\geq 1$, $G^r(A):=(G_1(A))^r$ its $r$-th power. We also set $G^0(A)=\CH_0(A)$. A straightforward computation shows that $G^1(A)$ is precisely the subgroup $F^1(A)$ of zero-cycles of degree $0$ and $G^2(A)$ coincides with the kernel $F^2(A)$ of the Albanese map. The filtration $\{G^r(A)\}_{r\geq 0}$ is known as the \textit{Pontryagin filtration} of $\CH_0(A)$ and it has been previously studied by Beauville and Bloch (\cite{Beauville1983, Beauville1986, Bloch1976}). \black 
 These groups have the following explicit generators. 
 \begin{eqnarray*}
&&G^{1}(A)=\langle[a]-[0]:a\in A(\kk)\rangle=\ker(\deg),\\
&&G^{2}(A)=\langle[a+b]-[a]-[b]+[0]:a,\;b\in A(\kk)\rangle,\\
&&G^{3}(A)=\langle[a+b+c]-[a+b]-[b+c]-[a+c]+[a]+[b]+[c]-[0]:a,\;b,\;c\in A(\kk)\rangle,\\
&&\ldots \\
&&G^{r}(A)=\left\langle\sum_{j=0}^{r}(-1)^{r-j}\sum_{1\leq\nu_{1}<\dots<\nu_{j}\leq r}[a_{\nu_{1}}+\dots+a_{\nu_{j}}]:a_{1},\dots,a_{r}\in A(\kk)\right\rangle. 
\end{eqnarray*} 
\begin{notn} We will denote by $z_{a_1,\ldots,a_r}$ the generator of $G^r(A)$ corresponding to points $a_1,\ldots,a_r\in A(\kk)$. Notice that $z_{a_1,\ldots,a_r}=([a_1]-[0])\odot\cdots\odot([a_r]-[0]).$
\end{notn}  Beauville and Bloch both showed a vanishing $G^{d+1}(A)=0$. Moreover, it follows by the main theorem of \cite{Beauville1986} and its proof that we have a 
 rational decomposition \black
 \[\CH_0(A)\otimes\Q\simeq \bigoplus_{r=0}^d\frac{G^r(A)\otimes\Q}{G^{r+1}(A)\otimes\Q}.\]

 \subsection{The Gazaki filtration}\label{gazakifil} The first author described in \cite{Gazaki2015} the graded quotients
 \[\displaystyle\frac{G^r(A)\otimes\Q}{G^{r+1}(A)\otimes\Q}\] \black
  as symmetric products of the abelian variety $A$. Namely, she defined a second integral filtration $\{F^r(A)\}_{r\geq 0}$ of $\CH_0(A)$ with the following properties. 
 \begin{enumerate}
 \item[(a)] $G^r(A)\subseteq F^r(A)$ for every $r\geq 0$ (see \cite[Proposition 3.3]{Gazaki2015}). 
 \item[(b)] For every $r\geq 1$, 
 \[\displaystyle G^r(A)\otimes\Z\left[\frac{1}{(r-1)!}\right]=F^r(A)\otimes\Z\left[\frac{1}{(r-1)!}\right]\]
 \black (see \cite[Proposition 4.1]{Gazaki2015}). In particular, $F^1(A)=G^1(A)$ and $F^2(A)=G^2(A)$. 
 \item[(c)] For every $r\geq 0$ there is a homomorphism \[\Psi_r \colon  \overbrace{A(\kk)\otimes\cdots\otimes A(\kk)}^r\to F^r(A)/F^{r+1}(A), \;\;a_1\otimes\cdots\otimes a_r\mapsto z_{a_1,\ldots,a_r}\] (see \cite[Theorem 1.3]{Gazaki2015}). The map $\Psi_r$ factors through a certain symmetric Somekawa $K$-group $S_r(\kk;A)$ which we review below.  
 \end{enumerate} 
\begin{defn} Let $A$ be an abelian variety over $\kk$. For $r\geq 1$ the Somekawa $K$-group $K_r(\kk;A):=K(\kk;\overbrace{A,\ldots,A}^r)$ attached to $r$ copies of $A$ is the quotient
\[K_r(\kk;A)=\frac{\overbrace{A(\kk)\otimes\cdots\otimes A(\kk)}^r}{(\textbf{WR})},\] where $(\textbf{WR})$ is the subgroup generated by the following type of elements. Let $C$ be a smooth projective curve over $\kk$ that admits regular maps $g_i \colon C\to A$ for $i=1,\ldots,r$. Let $f\in \kk(C)^\times$. Then we require 
\begin{equation}\label{eq:WR}
	\sum_{x\in C}\ord_x(f)g_1(x)\otimes\cdots\otimes g_r(x)\in(\textbf{WR}).
\end{equation}
 The symmetric $K$-group $S_r(\kk;A)$ is the quotient of $K_r(\kk;A)$ by the action of the symmetric group in $r$ variables. 
\end{defn}
The relation $(\textbf{WR})$ is known as \textit{Weil Reciprocity}. We will denote the generator of $S_r(\kk;A)$ corresponding to the tensor $a_1\otimes\cdots\otimes a_r$ as a symbol $\{a_1,\ldots,a_r\}$. The following proposition is the analog of \autoref{zaa} for higher dimensions and it is an easy consequence of the above analysis. 
\begin{prop}\label{higherdim} Let $A$ be an abelian variety over an algebraically closed field $\kk$. If the symmetric $K$-group $S_2(\kk;A)$ is torsion, then $F^2(A)=0$. In particular, if $\{a,a\}$ is torsion for every $a\in A$, then $F^2(A)=0$. 
\end{prop}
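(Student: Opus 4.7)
The plan is to deduce the ``in particular'' clause from the main implication via polarization in the symmetric $K$-group, and to prove the main implication by combining surjectivity of the map $\Psi_2$ from~\autoref{gazakifil}(c) with a Pontryagin-product propagation up the filtration $F^\bullet$.

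For the reduction, I would use that $S_2(\kk; A)$ is generated by symbols $\{a, b\}$, and that this symbol is bilinear (inherited from $A(\kk) \otimes A(\kk)$) and symmetric (by definition of the symmetric Somekawa $K$-group). Expanding $\{a+b, a+b\}$ yields
\[2\{a, b\} = \{a+b, a+b\} - \{a, a\} - \{b, b\}.\]
If every diagonal symbol $\{c, c\}$ is torsion, then $2\{a, b\}$ is a difference of torsion elements and hence torsion, so $\{a, b\}$ is torsion and $S_2(\kk; A)$ is torsion.

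For the main implication, assume $S_2(\kk; A)$ is torsion. By~\autoref{gazakifil}(c), $\Psi_2$ factors through $S_2(\kk; A)$, and since the cycles $z_{a,b}$ generate $F^2(A) = G^2(A)$ (using~\autoref{gazakifil}(b) at $r=2$), $\Psi_2$ is surjective onto $F^2(A)/F^3(A)$. Hence $F^2(A)/F^3(A)$ is torsion, equivalently $F^2(A) \otimes \Q = F^3(A) \otimes \Q$. To propagate this equality up the filtration, note that rationally we have $F^r = G^r = G^2 \odot (G^1)^{r-2} = F^2 \odot (F^1)^{r-2}$, so Pontryagin-multiplying the equality $F^2 \otimes \Q = F^3 \otimes \Q$ by $(F^1)^{r-2}$ gives $F^r \otimes \Q = F^{r+1} \otimes \Q$ for every $r \geq 2$. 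Combined with the Beauville--Bloch vanishing $G^{d+1}(A) = 0$ (where $d = \dim A$), this collapses the chain to $F^2(A) \otimes \Q = 0$. Since $F^2(A)$ is torsion-free by Rojtman's theorem, we conclude $F^2(A) = 0$.

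The main technical subtlety is the propagation step: one must verify that Pontryagin multiplication is compatible with the filtration $F^\bullet$ rationally. The integral filtrations $F^\bullet$ and $G^\bullet$ differ in general for $r \geq 3$, but by~\autoref{gazakifil}(b) they coincide rationally, which is what is needed. For abelian surfaces the induction collapses to the single step $F^2 \otimes \Q = F^3 \otimes \Q = 0$, recovering~\autoref{zaa}.
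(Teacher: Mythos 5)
Your proof is correct, but the main implication is argued by a genuinely different route than the paper's. The paper first upgrades the hypothesis to ``$S_r(\kk;A)$ is torsion for all $r\geq 2$'' using the product formula for Somekawa $K$-groups from \cite{GazakiLove2022} (concatenation of symbols gives a surjection $S_2(\kk;A)\otimes A(\kk)^{\otimes(r-2)}\twoheadrightarrow S_r(\kk;A)$), and then runs a \emph{downward} induction from $r=d$: the map $\Psi_r:S_r(\kk;A)\to F^r(A)/F^{r+1}(A)$ must vanish because its source is torsion and (once $F^{r+1}(A)=0$ is known) its target sits inside the torsion-free Albanese kernel, which forces $G^r(A)=F^r(A)=0$ integrally at each stage. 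You instead use only $\Psi_2$ to get $F^2(A)\otimes\Q=F^3(A)\otimes\Q$ and then propagate \emph{upward} using the multiplicativity of the Pontryagin filtration, $G^r=(G^1)^{\odot r}=G^2\odot(G^1)^{\odot(r-2)}$, together with the rational agreement $F^r\otimes\Q=G^r\otimes\Q$ from \autoref{gazakifil}(b), until the Beauville--Bloch vanishing $G^{d+1}(A)=0$ collapses the chain. The propagation step is sound: since $\Q$ is flat, $(I\odot J)\otimes\Q=(I\otimes\Q)\odot(J\otimes\Q)$ inside $\CH_0(A)\otimes\Q$, so the ideal-theoretic manipulation is legitimate. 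What each approach buys: yours is more self-contained (no appeal to the Somekawa product formula) and works rationally throughout, invoking torsion-freeness only once at the end; the paper's establishes the stronger intermediate fact that every $S_r(\kk;A)$ is torsion and yields the integral vanishings $G^r(A)=F^r(A)=0$ for all $r\geq 2$ along the way. Your treatment of the ``in particular'' clause via $2\{a,b\}=\{a+b,a+b\}-\{a,a\}-\{b,b\}$ coincides with the paper's.
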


\begin{proof} 
First we claim that if $S_2(\kk;A)$ is torsion, then $S_r(\kk;A)$ is torsion for all $r\geq 2$. This follows by a product formula for Somekawa $K$-groups proved by the authors in \cite[Proposition 3.1]{GazakiLove2022}. This proposition gives a homomorphism
  \[\rho \colon K_2(\kk;A)\otimes \overbrace{A(\kk)\otimes\cdots\otimes A(\kk)}^{r-2}\rightarrow K_r(\kk;A)\] given by concatenation of symbols, $\rho(\{a_1,a_2\}\otimes a_3\otimes\cdots\otimes a_n)=\{a_1,\ldots, a_n\}$. We see that $\rho$ is surjective by construction. The homomorphism $\rho$ clearly induces a surjective homomorphism
  \[S_2(\kk;A)\otimes A(\kk)\otimes\cdots\otimes A(\kk)\rightarrow S_r(\kk;A),\] and hence if $S_2(\kk;A)$ is torsion, then so is $S_r(\kk;A)$ for all $r\geq 3$. 
  
  From now on suppose that $S_r(\kk;A)$ is torsion for all $r\geq 2$. It follows by \eqref{gazakifil} (c) that there is a homomorphism $\Psi_d \colon  S_d(\kk;A)\to F^d(A)/F^{d+1}(A)$ sending the symbol 
  $\{a_1,\ldots,a_d\}$ to $z_{a_1,\ldots,a_d}$. \black Moreover, it follows by (b) that $\displaystyle F^{d+1}(A)\otimes\Z\left[\frac{1}{d!}\right]=0$, since $G^{d+1}(A)=0$. Since $F^{d+1}(A)$ is a subgroup of the Albanese kernel, which is torsion-free, it follows that $F^{d+1}(A)=0$, and hence we have a well-defined homomorphism 
$\Psi_d \colon S_d(\kk;A)\to F^d(A).$ Since $S_d(\kk;A)$ is torsion and $F^d(A)$ is torsion-free, it follows that $\Psi_d=0$. Notice that by construction the map $\Psi_d$ surjects onto $G^d(A)\subseteq F^d(A)$. Thus, we conclude that $G^d(A)=0$, and by torsion-freeness also $F^d(A)=0$. We now proceed by reverse induction. Having proved that $G^r(A)=F^r(A)=0$ for some $r\geq 3$, we get that $G^{r-1}(A)=F^{r-1}(A)=0$ by considering the homomorphism
\[\Psi_{r-1} \colon  S_{r-1}(A)\rightarrow F^{r-1}(A)\] which surjects onto $G^{r-1}(A)$. 

The last claim follows by symmetry and bilinearity. The group $S_2(\kk;A)$ is torsion if an only if $\{a,b\}$ is torsion for all $a,b\in A$. But $2\{a,b\}=\{a,a\}+\{b,b\}-\{a+b,a+b\}$. 

 \end{proof}
 
The following is the analog of \autoref{odd} and the proof is along the same lines. 
\begin{prop}\label{odd2} Let $A$ be an abelian variety of dimension $d$ over $\kk$ and let $a\in A(\kk)$ be a hyperelliptic point (in the sense of \autoref{goodpoint}). Then $\{a,a\}\in S_2(\kk;A)$ is torsion. 
\end{prop}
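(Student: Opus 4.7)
The plan is to adapt the proof of \autoref{odd} to the symmetric Somekawa $K$-group setting, replacing the pushforward argument along $\Pic^0(H)\to F^1(A)$ used in the surface case with a direct application of the Weil reciprocity relation $(\mathbf{WR})$ that is already built into the definition of $K_2(\kk;A)$. The key observation is that in $S_2(\kk;A)$ the symbol $\{a,b\}$ is $\Z$-bilinear, so the elementary arithmetic that closed the surface argument will transport unchanged.

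First I would reduce to the case in which $a$ itself (not just a multiple) lies in the image of a compatible map $f:H\to A$: by hypothesis $na=f(q)$ for some nonzero $n$ and some $q\in H(\kk)$, and bilinearity of the symbol gives $\{na,na\}=n^2\{a,a\}$, so torsion of $\{na,na\}$ implies torsion of $\{a,a\}$. By \autoref{Weierstrass} I may pick a Weierstrass point $w\in H(\kk)$ with $p_0:=f(w)\in A[2]$, and choose $q$ not a Weierstrass point. Let $\pi:H\to\mathbb{P}^1$ be the hyperelliptic cover. Pulling back a rational function on $\mathbb{P}^1$ with divisor $[\pi(q)]-[\pi(w)]$ produces a function $h\in\kk(H)^\times$ with $\dv(h)=[q]+[\iota q]-2[w]$.

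Next I would apply the Weil reciprocity relation \eqref{eq:WR} to this function $h$, with $r=2$ and $g_1=g_2=f$. This immediately yields the relation
\[
\{f(q),f(q)\}+\{f(\iota q),f(\iota q)\}-2\{f(w),f(w)\}=0
\]
in $K_2(\kk;A)$, and hence also in $S_2(\kk;A)$. Using $f\circ\iota=-f$ along with bilinearity (which implies $\{-a,-a\}=\{a,a\}$), this collapses to $2\{a,a\}=2\{p_0,p_0\}$. Since $p_0\in A[2]$, another application of bilinearity gives $2\{p_0,p_0\}=\{2p_0,p_0\}=\{0,p_0\}=0$, so $2\{a,a\}=0$ and $\{a,a\}$ is torsion as required.

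I do not expect a genuine obstacle here: the proof is a direct transcription of the abelian surface argument, with the key substitution being that $(\mathbf{WR})$ plays the role of the relation $f_\star([q]+[\iota q]-2[w])=0$ used in \autoref{odd}. The only minor bookkeeping point is to choose $q$ away from the Weierstrass locus so that $\dv(h)$ has the clean form $[q]+[\iota q]-2[w]$; since $H$ has genus at least $1$ it has only finitely many Weierstrass points, so such $q$ exists whenever $f$ is non-constant, which is automatic from the hypothesis that some nonzero multiple of $a$ is in the image of $f$.
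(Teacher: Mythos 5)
Your proposal is correct and follows essentially the same route as the paper: reduce by bilinearity to $a=f(q)$, take the principal divisor $[q]+[\iota q]-2[w]$, and apply Weil reciprocity with $g_1=g_2=f$ to get $2\{a,a\}=2\{f(w),f(w)\}=0$. If anything you are slightly more careful than the paper's write-up, which suppresses the $-2\{f(w),f(w)\}$ term in its displayed relation; your explicit disposal of it via $2\{p_0,p_0\}=\{2p_0,p_0\}=0$ is the right bookkeeping.
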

\begin{proof}
By assumption there exists some nonzero multiple of $a$ that lies in the image of a morphism $f \colon H\to A$ from a hyperelliptic curve such that negation on $A$ induces the hyperelliptic involution on $A$. \black 
As usual, using bilinearity we may assume that $a=f(q)$ is itself in the image of $f$. 
Then $[q]+[\iota(q)]-2[w]$ is a principal divisor on $H$, where $w$ is a Weierstrass point of $H$. Let $h\in \kk(C)^\times$ be the function whose divisor is $[q]+[\iota(q)]-2[w]$. We apply the Weil Reciprocity relation $(\textbf{WR})$ of $S_2(\kk;A)$ for the following choices: $g_1=g_2=f$ and $h\in\kk(C)^\times$ as above. Given that $f(q)=-f(\iota(q))$, it follows
\[\{a,a\}+\{-a,-a\}=0\Rightarrow 2\{a,a\}=0.\] 

\end{proof}

Because of \autoref{odd2}, \autoref{beilreduce1} generalizes verbatim to higher dimensions. 

\begin{cor}\label{beilreducehigher} 
 Let $A$ be an abelian variety of dimension $d\geq 2$ over a number field $k$. Suppose that for every finite extension $L/k$, if we let $r$ be the Mordell-Weil rank of $A(L)$ and $a_1,\ldots,a_r\in A(L)$ independent elements in $A(L)$, there exists $M\subseteq \Z^r$ such that
	\begin{itemize}
		\item the set of tensors $\mathbf{m}\otimes \mathbf{m}$ for $\mathbf{m}\in M$ spans $\Sym^2\Q^r$ as a $\Q$-vector space, and
		\item for each $(m_1,\ldots,m_r)\in M$, the point $m_1a_1+\cdots+m_ra_r$ is a hyperelliptic point.
	\end{itemize} 
	Then Beilinson's \autoref{beilconj2} is true for $A_{\kk}$. \black

\end{cor}

\begin{rem}  
	The reason we chose to state most of our results for abelian surfaces is because it seems unlikely that for a general abelian variety $A/\QQ$ of $\dim(A)\geq 3$ relations arising from hyperelliptic curves will be enough to prove Beilinson's conjecture. This is because in $\dim\geq 3$ the quotient $A/\langle -1\rangle$ by negation is in general not Calabi-Yau, and hence it does not contain rational curves (see \cite[Remark 7.7]{bogomolov_tschinkel}), which is our main method to construct hyperelliptic curves mapping to $A$. However, 
	 there may be a large collection of hyperelliptic curves in $A$ \black for special classes of abelian varieties. For example, when $J$ is the Jacobian of a hyperelliptic curve of genus $g\geq 2$, Bogomolov and Tschinkel proved the analog of Bogomolov's conjecture for $J/\overline{\F}_p$ (\cite[Theorem 7.1]{bogomolov_tschinkel})).  
\end{rem}

\vspace{5pt}

		\bibliographystyle{amsalpha}
		\bibliography{bibfile,bibabelian}
		\vspace{10pt}
		\appendix

		\section{Intersection number computations}\label{appendix:intersection}
		
		In this appendix we provide the proofs of \autoref{lem:intersection_formula}, \autoref{lem:projection_degree}, and \autoref{bound_transverse}.
		
		As in \autoref{Kummerfibration}, let $E_a,E_b/\kk$ be elliptic curves in Legendre form, with $p_0,\ldots,p_3\in E_a[2]$ and $q_0,\ldots,q_3\in E_b[2]$.
		We describe a certain collection of rational curves in the Kummer surface $K$ of $E_a\times E_b$, following the notation of 
		\begin{itemize}
			\item For $0\leq i\leq 3$, $p_i\times E_b$ maps to a rational curve in $(E_a\times E_b)/\langle -1\rangle$; its pullback in $K$ is denoted $F_i$.
			\item For $0\leq j\leq 3$, $E_a\times q_j$ maps to a rational curve in $(E_a\times E_b)/\langle -1\rangle$; its pullback in $K$ is denoted $G_j$.
			\item For $0\leq i,j\leq 3$, $(p_i,q_j)$ maps to a singularity in $(E_a\times E_b)/\langle -1\rangle$; its blowup in $K$ is denoted $A_{ij}$.
		\end{itemize}
		The divisor
		\[D:=\sum_{0\leq i,j\leq 3} A_{ij}\]
		is the exceptional locus of the map $K\to A/\langle -1\rangle$.
		If we remove the curves 
		\begin{align}\label{eq:inf_fiber}
			F_1,F_2,F_3,G_0,A_{00},A_{10},A_{20},A_{30}
		\end{align}
		(corresponding to points $(x_1,y_1,x_2,y_2)\in E_a\times E_b$ with $y_1=0$ or $y_2=\infty$)
		we obtain an affine chart of $K$ defined by the equation
		\begin{equation}\label{affinepatch}
			x_1(x_1-a)(x_1-a)t^2=x_2(x_2-1)(x_2-b),
		\end{equation}
		with the rational map $E_a\times E_b\dashrightarrow K$ given by 
		\[(x_1,y_1,x_2,y_2)\mapsto \left(x_1,x_2,\frac{y_2}{y_1}\right).\]
		We have a map $\xi \colon K\to\mathbb{P}^1$ defined by $(x_1,x_2,t)\mapsto t$; this determines a fibration of $K$ called \emph{Inose's pencil}.  	The fiber at $t=\infty$ is
		\[3G_0+2(A_{10}+A_{20}+A_{30})+F_1+F_2+F_3,\]
		of Kodaira type $IV^*$. The fiber at $t=0$ is
		\[3F_0+2(A_{01}+A_{02}+A_{03})+G_1+G_2+G_3,\]
		also of Kodaira type $IV^*$. There may be other singular fibers (depending on $a$ and $b$), but if $E_a$ and $E_b$ have distinct $j$-invariants, then the fibers at $t=0,\infty$ are the only reducible fibers~\cite[Proposition 5.1]{shioda07}. The curves $A_{ij}$ for $1\leq i,j\leq 3$ are sections of the fibration, and the curve $A_{00}$ is a multisection, intersecting every fiber with multiplicity $3$. 
		
		If we fix $A_{11}$ as the zero section, $K$ obtains the structure of an elliptic fibration. We use~$\oplus$ to denote Mordell-Weil addition, to distinguish it from the sum of curves as divisors. The remaining eight sections satisfy the relations~\cite[Section 6]{Scholten}
		\begin{align}\label{eq:Hij_relations}
			\begin{split}
				A_{13}&=A_{22}\oplus A_{32},\\
				A_{12}&=A_{23}\oplus A_{33},\\
				A_{21}&=A_{32}\oplus A_{33},\\
				A_{31}&=A_{22}\oplus A_{23},
			\end{split}
		\end{align}
		while $A_{22},A_{32},A_{23},A_{33}$ are independent provided the $j$-invariants of $E_a$ and $E_b$ are nonzero and distinct.
		
		For integers $p,q,r,s$, we set
		\[\mathcal{P}=pA_{22}\oplus qA_{33}\oplus rA_{23}\oplus A_{32}\]
		and
		\[n(p,q,r,s):=p(p-1)+q(q-1)+r(r-1)+s(s-1)+pq+rs.\]

		\begin{proof}[Proof of \autoref{lem:intersection_formula}]
			First we determine the intersection of $\mathcal{P}$ with the fibers at $t=0,\infty$. As a section of the fibration, $\mathcal{P}$ intersects these fibers each with multiplicity $1$, so it cannot intersect any components that occur with multiplicity greater than $1$. Hence $\mathcal{P}$ has zero intersection with each of $F_0,G_0,A_{01},A_{02},A_{03},A_{10},A_{20},A_{30}$. So $\mathcal{P}$ intersects exactly one of $F_1,F_2,F_3$, and exactly one of $G_1,G_2,G_3$. 
			
			Now for each $0\leq i\leq 3$, the divisors $D_i :=2F_i+\sum_j A_{ij}$ are fibers of the map $K\to \mathbb{P}^1$ given by $(x_1,x_2,t)\mapsto x_1$~\cite[Equation (3.3)]{shioda07}, and hence they have equal intersection product with $\mathcal{P}$. Therefore $3\mathcal{P}\cdot D_0=\mathcal{P}\cdot (D_1+D_2+D_3)$, which simplifies to
			\[3\mathcal{P}\cdot A_{00}=2\mathcal{P}\cdot (F_1+F_2+F_3)+\sum_{1\leq i,j\leq 3} \mathcal{P}\cdot A_{ij}.\]
			Since $\mathcal{P}\cdot (F_1+F_2+F_3)=1$ we obtain
			\begin{align}\label{eq:PD}
				\mathcal{P}\cdot D=\frac 23+\frac43\sum_{1\leq i,j\leq 3} \mathcal{P}\cdot A_{ij}.
			\end{align}
			This expresses $\mathcal{P}\cdot D$ entirely in terms of intersection products of sections of Inose's pencil, so we can now apply a formula relating intersection multiplicity of sections to the N\'eron-Tate pairing~\cite[Theorem 8.6]{shioda90}. We obtain
			\begin{align*}
				\sum_{1\leq i,j\leq 3} \mathcal{P}\cdot A_{ij}&=9\chi+9(\mathcal{P}\cdot A_{11})\\
				&\quad+\sum_{1\leq i,j\leq 3} \left((A_{ij}\cdot A_{11})-\langle \mathcal{P}, A_{ij}\rangle-\text{contr}_0(\mathcal{P},A_{ij})-\text{contr}_\infty(\mathcal{P},A_{ij})\right),
			\end{align*}
			where $\chi$ is the arithmetic genus of $K$, and $\text{contr}_v$ is a function on pairs of sections that depends only on which components of the fiber at $v$ they each intersect. Since $K$ is a K3 surface we have $\chi=2$ and $A_{11}\cdot A_{11}=-\chi=-2$, as well as $A_{ij}\cdot A_{11}=0$ for all other $i,j$, so this simplifies to 
			\begin{align}\label{eq:intersection_sum}
				\sum_{1\leq i,j\leq 3} \mathcal{P}\cdot A_{ij}=&16+9(\mathcal{P}\cdot A_{11})-\sum_{1\leq i,j\leq 3} \left(\langle \mathcal{P}, A_{ij}\rangle+\text{contr}_0(\mathcal{P},A_{ij})+\text{contr}_\infty(\mathcal{P},A_{ij})\right).
			\end{align}
			To compute the $\text{contr}_\infty$ terms, we note that Mordell-Weil addition induces a $\Z/3\Z$ structure on the multiplicity $1$ components $F_1,F_2,F_3$ of the fiber at $\infty$ (with $F_1$ being the identity). If $\mathcal{P}$ intersects $F_k$ then
			\begin{align*}
				\text{contr}_\infty(\mathcal{P},A_{ij})&=\left\{\begin{array}{ll}
					0,& \text{if }i=1\text{ or }k=1,\\
					2/3, & \text{if }i,k\neq 1\text{ and }i\neq k,\\
					4/3, & \text{if }i,k\neq 1\text{ and }i=k.
				\end{array}\right.
			\end{align*}
			\cite[Equation (8.16)]{shioda90}. We can therefore conclude that
			\[\sum_{1\leq i,j\leq 3}\text{contr}_\infty(\mathcal{P},A_{ij})=\left\{\begin{array}{ll}
				0,& \text{if }\mathcal{P}\text{ intersects }F_1,\\
				6, & \text{otherwise.}
			\end{array}\right.\]
			Similarly, 
			\[\sum_{1\leq i,j\leq 3}\text{contr}_0(\mathcal{P},A_{ij})=\left\{\begin{array}{ll}
				0,& \text{if }\mathcal{P}\text{ intersects }G_1,\\
				6, & \text{otherwise.}
			\end{array}\right.\]
			
			To compute $\mathcal{P}\cdot A_{11}$, we again use the relation between intersection and N\'eron-Tate pairings:
			\begin{equation}\label{PH11}
				\mathcal{P}\cdot A_{11}=\frac12\langle\mathcal{P},\mathcal{P}\rangle-2+\frac12\text{contr}_0(\mathcal{P},\mathcal{P})+\frac12\text{contr}_\infty(\mathcal{P},\mathcal{P}),
			\end{equation}
			with $\text{contr}_\infty(\mathcal{P},\mathcal{P})=0$ if~$\mathcal{P}$ intersects $F_1$ and $\frac43$ otherwise, and likewise $\text{contr}_0(\mathcal{P},\mathcal{P})=0$ if $\mathcal{P}$ intersects $G_1$ and $\frac43$ otherwise. Plugging this into \autoref{eq:intersection_sum} and noting the wonderful coincidence $9\cdot\frac12\cdot\frac43=6$, the $\text{contr}_v$ terms cancel, so the equation simplifies to
			\begin{align}
				\sum_{1\leq i,j\leq 3} \mathcal{P}\cdot A_{ij}=&\frac92\langle\mathcal{P},\mathcal{P}\rangle-2-\sum_{1\leq i,j\leq 3} \langle \mathcal{P}, A_{ij}\rangle.
			\end{align}
			Noting that
			\[\bigoplus_{1\leq i,j\leq 3} A_{ij}=3A_{22}\oplus 3A_{23}\oplus 3A_{32}\oplus 3A_{33}\]
			by \autoref{eq:Hij_relations}, and using the canonical height computations from \cite[Section 6]{Scholten}, we can continue from \autoref{eq:PD} to obtain
			\begin{align*}
				\mathcal{P}\cdot D&=\frac23 +\frac43\sum_{1\leq i,j\leq 3} \mathcal{P}\cdot A_{ij}\\
				&=\frac23 +\frac43\left(-2+\frac92\langle\mathcal{P},\mathcal{P}\rangle-\langle \mathcal{P},3A_{22}\oplus 3A_{23}\oplus 3A_{32}\oplus 3A_{33}\rangle \right)\\
				&=-2+\frac43\begin{pmatrix}
					p & q & r & s
				\end{pmatrix}\begin{pmatrix}
					4/3 & 2/3 & 0 & 0\\
					2/3 & 4/3 & 0 & 0\\
					0 & 0 & 4/3 & 2/3\\
					0 & 0 & 2/3 & 4/3
				\end{pmatrix}\left(\frac92 \begin{pmatrix}
					p \\ q \\ r \\ s
				\end{pmatrix}-\begin{pmatrix}
					3 \\ 3 \\ 3 \\ 3
				\end{pmatrix}\right)\\
				&=8n(p,q,r,s)-2.
			\end{align*}
			
		\end{proof}
	
	\begin{proof}[Proof of \autoref{lem:projection_degree}]
		 Without loss of generality we consider the map $K\to\mathbb{P}^1$ given by $(x_1,x_2,t)\mapsto x_1$.  As in the proof of \autoref{lem:intersection_formula}, we observe that $D_i=2F_i+\sum_j A_{ij}$ is a fiber of this map for $i=1,2,3$, and therefore they each have the same intersection product with~$\mathcal{P}$.  Using $\mathcal{P}\cdot (F_1+F_2+F_3)=1$ and \autoref{eq:PD}, we obtain 
		\begin{align*}
			\mathcal{P}\cdot D_1&=\frac13\mathcal{P}\cdot (D_1+D_2+D_3)\\
			&=\frac23+\frac13\sum_{1\leq i,j\leq 3} \mathcal{P}\cdot A_{ij}\\
			& =\frac23+\frac14\left(\mathcal{P}\cdot D-\frac23\right)\\
			& =2n(p,q,r,s). 
		\end{align*}
		
	\end{proof}

	\begin{proof}[Proof of \autoref{bound_transverse}]
		We first compute
		\begin{align*}
			\langle \mathcal{P},\mathcal{P}\rangle&=\begin{pmatrix}
				p & q & r & s
			\end{pmatrix}\begin{pmatrix}
				4/3 & 2/3 & 0 & 0\\
				2/3 & 4/3 & 0 & 0\\
				0 & 0 & 4/3 & 2/3\\
				0 & 0 & 2/3 & 4/3
			\end{pmatrix}\begin{pmatrix}
				p \\ q \\ r \\ s
			\end{pmatrix}\\
			&= \frac43 (p^2 + p q + q^2 + r^2 + r s + s^2),
		\end{align*}
		and note that this is nonzero for all $(p,q,r,s)\neq (0,0,0,0)$; therefore $\mathcal{P}$ is a non-torsion section.	
		Let $m_t(\mathcal{P},A_{11})$ denote the local intersection multiplicity between the sections $\mathcal{P}$ and $A_{11}$ at $t$, so that
		\[\mathcal{P}\cdot A_{11}=\sum_{t\in \kk}m_t(\mathcal{P},A_{11}).\]
		Let $I(\mathcal{P},t)$ denote the intersection multiplicity of $\mathcal{P}$ with the \emph{Betti foliation} of $K$, as defined in \cite[Section 4.1]{ulmerurzua}. The identity section $A_{11}$ is a leaf of this foliation, and so $I(\mathcal{P},t)\geq m_t(\mathcal{P},A_{11})$ for all intersection points $t$. Consequently, we have
		\begin{align*}
			\mathcal{P}\cdot A_{11}-|\{t\in\kk:m_t(\mathcal{P},A_{11})=1\}|&=\sum_{\substack{t\in \kk \\ m_t(\mathcal{P},A_{11})\geq 2}}m_t(\mathcal{P},A_{11})\\
			&\leq 2\sum_{m_t(\mathcal{P},A_{11})\geq 2}(m_t(\mathcal{P},A_{11})-1)\\
			&\leq 2\sum_{I(\mathcal{P},t)\geq 2}(I(\mathcal{P},t)-1)\\
			&\leq 2\delta+2\sum_{t\in\kk}(I(\mathcal{P},t)-1),
		\end{align*}
		where $\delta$ is the number of singular fibers of $K\to\mathbb{P}^1$; these are the only fibers for which it is possible to have $I(\mathcal{P},t)=0$. By \cite[Theorem 7.1]{ulmerurzua}, we have
		\[\sum_{t\in\kk}(I(\mathcal{P},t)-1)\leq -2\]
		(note that $d\geq 0$ because $\omega^{\otimes 12}$ has a section, see \cite[Page 1]{ulmerurzua}).
		By \autoref{PH11} we have
		\begin{align*}
			\mathcal{P}\cdot A_{11}&=\frac12\langle \mathcal{P},\mathcal{P}\rangle -2+\frac12\text{contr}_0(\mathcal{P},\mathcal{P})+\frac12\text{contr}_0(\mathcal{P},\mathcal{P})\\
			&\geq \frac12\langle \mathcal{P},\mathcal{P}\rangle-2
		\end{align*}
		since the $\text{contr}_v$ terms are non-negative. Finally, we have $\delta\leq 10$ by \cite[Proposition 5.1]{shioda07}, so combining the above results we obtain
		\begin{align*}
			|\{t\in\kk:m_t(\mathcal{P},A_{11})=1\}|&\geq  
			\mathcal{P}\cdot A_{11}-2\delta-2\sum_{t\in\kk}(I(\mathcal{P},t)-1)\\
			&\geq  
			\frac12\langle \mathcal{P},\mathcal{P}\rangle-2-20+4\\
			 &=\frac23(p^2+pq+q^2+r^2+rs+s^2)-18.
		\end{align*}
		Since $p^2+pq+q^2\geq -p-q$ for all $p,q\in\Z$ (which can be checked by noting that $p^2+pq+q^2+p+q$ takes on a minimum value of $-\frac13$ for $p,q\in\mathbb{R}$), and a similar inequality holds for $r,s$, this is greater than or equal to 
		\[\frac13(p(p-1)+q(q-1)+r(r-1)+s(s-1)+pq+rs)-18\]
		as desired. 
		
	\end{proof} 

\end{document}